
\documentclass[journal,letterpaper,pdftex]{IEEEtran}
%


%
\usepackage{bm}
\usepackage{textcomp}
\usepackage{enumitem}
\usepackage{amsthm}
\usepackage{graphicx}
\usepackage{color}
\usepackage{subcaption}
\usepackage{float}
\usepackage[nocomma]{optidef}
\usepackage{dsfont}
\usepackage[font=small]{caption}
\usepackage{times}
\usepackage{wrapfig}
\usepackage{fancyhdr,graphicx,amsmath,amssymb}
\usepackage[ruled,vlined]{algorithm2e}
\usepackage{multirow}
\include{pythonlisting}
\binoppenalty=10000
\relpenalty=10000
\usepackage{algorithmic}
\newcounter{algsubstate}
\makeatletter

\makeatother

\usepackage{array}
\newcommand{\PreserveBackslash}[1]{\let\temp=\\#1\let\\=\temp}
\newcolumntype{C}[1]{>{\PreserveBackslash\centering}p{#1}}
\newcolumntype{R}[1]{>{\PreserveBackslash\raggedleft}p{#1}}
\newcolumntype{L}[1]{>{\PreserveBackslash\raggedright}p{#1}}
\allowdisplaybreaks

\newcommand{\bt}[1]{{\color{black}#1}}
%
\usepackage{cite}

\usepackage{xcolor}
\usepackage{amsmath}
\usepackage{amssymb}
\interdisplaylinepenalty=2500
\usepackage{mathtools}

%

%
\makeatletter
\def\BState{\State\hskip-\ALG@thistlm}
\makeatother
\usepackage{fixltx2e}

\usepackage{stfloats}

\begin{document}

%
\title{Competition in  Electric  Autonomous Mobility on Demand Systems}

\author{\IEEEauthorblockN{Berkay Turan \vspace{-0cm}\quad}
\and
\IEEEauthorblockN{Mahnoosh Alizadeh}
\vspace{-1cm}
}


%


\maketitle
\begin{abstract}This paper investigates the impacts of competition in autonomous mobility-on-demand systems. By adopting a network-flow based formulation, we first determine the optimal strategies of profit-maximizing platform operators in monopoly and duopoly markets, including the optimal prices of rides. Furthermore, we characterize the platform operator's profits and the consumer surplus. We show that for the duopoly, the \bt{equilibrium} prices for rides have to be symmetric between the firms. Then, in order to study the benefits of introducing competition in the market, we derive universal theoretical bounds on the ratio of prices for rides, aggregate demand served, profits of the firms, and consumer surplus between the monopolistic and the duopolistic setting. We discuss how consumers' firm loyalty affects each of the aforementioned metrics. Finally,  using the Manhattan network and demand data, we quantify the efficacy of  static pricing and routing policies and compare it to real-time model predictive policies.
\end{abstract}

%
\IEEEpeerreviewmaketitle
\newtheorem{proposition}{Proposition}
\newtheorem{assumption}{Assumption}
\newtheorem{remark}{Remark}
\newtheorem{corollary}{Corollary}[proposition]
\newtheorem{theorem}{Theorem}
\newtheorem{lemma}{Lemma}
\newcommand{\eqdef}{\vcentcolon=}
\newcommand{\lija}{\ell_{ij}^1}
\newcommand{\lijb}{\ell_{ij}^2}
\newcommand{\lmax}{\ell_{\max}}
\newcommand{\lijm}{\ell_{ij}^m}
\newcommand{\lijd}{\ell_{ij}^d}
\newcommand{\la}{\ell_1}
\newcommand{\lb}{\ell_2}
\newcommand{\lij}{\ell_{ij}}
\newcommand{\dijm}{\lambda_{ij}^{m}}
\newcommand{\dijd}{\lambda_{ij}^d}
\newcommand{\dij}{\lambda_{ij}}
\newcommand{\dbound}{\frac{(3\sigma-1)(3-\sigma)}{4(5-3\sigma)}\ell_{\max}}
\def\bt#1{{\color{black}#1}}
\def\btt#1{{\color{black}#1}}
\makeatletter
\def\blfootnote{\xdef\@thefnmark{}\@footnotetext}
\makeatother

\setlength{\abovedisplayskip}{3pt}
\setlength{\belowdisplayskip}{3pt}

\captionsetup{belowskip=-10pt}


\blfootnote{
This work is supported by the NSF Grants 1837125, 1847096. B. Turan and M. Alizadeh are with the Department of Electrical and Computer Engineering, University of California, Santa Barbara, CA, 93106 USA e-mail:\{bturan,alizadeh\}@ucsb.edu.}
\section{Introduction}

In the past decade, growing popularity of mobility-on-demand (MoD) platforms has extensively altered the paradigm of urban mobility. Owing to the rapid evolution of enabling technologies for autonomous driving and advancements in eco-friendly electric vehicles (EVs), it is possible for MoD platforms to employ self-driving electric vehicles and therefore preserve the benefits of private automobiles while reducing the consumption of non-renewable energy resources. Given this, the vision of an electric and autonomous mobility-on-demand (AMoD) fleet serving urban customers' mobility needs is gaining traction within the transportation industry, with multiple companies now heavily investing in AMoD technology \cite{companies}.

Unlike modern ride-sharing platforms that establish a two-sided market between drivers and customers such as Uber and Lyft and  rely only on pricing schemes to manage the demand-supply balance, self-driving technology allows AMoD systems to operate in a single-sided market in which the customers are directly served by the platform operator rather than the drivers. This allows the platforms to centrally manage their fleets of vehicles and hence efficiently dispatch them to where they are needed the most without the need to incentivize drivers, including \textit{rebalancing} the idle passenger-free vehicles throughout the network to match supply and demand.  Furthermore, autonomous vehicles can better exploit diversity in electricity prices for cheaper operation. 

In this paper, we study the effects of competition in electric AMoD systems that are operated by profit-maximizing platform operators.  Owing to the opportunities that autonomous electric vehicles create for efficient control schemes and cost-effective operation, it is possible for a single platform operator to provide cheap rides through optimizing the prices of rides for geographical load balancing as well as optimally routing and charging the fleet of electric vehicles.  However, a monopolistic market with a single AMoD provider is in general disadvantageous for customer welfare. Therefore, introduction of another AMoD service provider to the market results in firms competing over the customers, hence forcing them to charge fairer prices and provide a higher quality of service. Our primary goal is to investigate the optimal behaviour of the firms in a monopoly and duopoly and quantify the impacts of competition on the customers as well as the firms.


Our contributions can be summarized as follows:

\begin{itemize}
    \item  We  formalize the platform operator's profit maximization problem by adopting a static network-flow based model that captures the   characteristics of an AMoD fleet, and derive expressions for the ride prices, profits, and consumer surplus under the optimal static policy.
    \item We prove that if the competitors have identical costs, then the duopoly equilibrium prices have to be symmetric. We show that under a mild sufficient condition on maximum travel costs that can be met with electric vehicles, the duopoly prices in equilibrium are never larger than the optimal monopoly prices. Furthermore, we derive theoretical bounds for the ratio of prices, induced demand, profits, and consumer surplus  in   the  monopoly  and the duopoly equilibrium.
    \item 
    We study a real-time pricing and fleet management policy using model predictive control, and demonstrate the performance numerically on real network and demand data. 
\end{itemize}

\noindent
\textbf{Related work:} Research on AMoD systems concentrates on optimal fleet control, with a particular focus on rebalancing. Scholars have tackled the rebalancing problem  using queueing theoretical \cite{queuetheoretical}, fluidic \cite{fluidic}, network flow \cite{networkflow}, and Markovian models \cite{markovian}. These works develop time-invariant policies by relying on the steady-state of the system, which may not be able to address  real-world challenges such as  variability in customer arrivals or integrality requirements in real-time dispatch. As a consequence, studies aiming to develop efficient real-time control policies using model predictive control (MPC) have emerged in the literature, e.g., \cite{zhang_rossi_pavone,mpcmiao,mpcstochastic}. The authors of \cite{mpcmiao} design a data-driven MPC algorithm by predicting the future demand, whereas the authors of \cite{mpcstochastic} develop a stochastic MPC algorithm that leverages uncertain travel demand forecasts. In \cite{zhang_rossi_pavone}, the authors also consider a fleet of EVs and hence propose an MPC approach that optimizes vehicle routing and scheduling subject to energy constraints. EV charging problem in electric AMoD systems has also been studied using dynamic programming \cite{turan2019smart} and online heuristics \cite{tucker2019online}.
None of these studies however adopt a price-responsive demand model and exploit pricing to further optimize the system. Lately, benefits of joint pricing and rebalancing in AMoD systems have been demonstrated using macroscopic steady-state models \cite{wollenstein2020joint,turan2019dynamic} as well as microscopic dynamic models \cite{turan2019dynamic}.

Research on competition in ride-sharing markets is also relevant to ours. In terms of a broader scope on platform competition in two-sided markets, \cite{weyl2010imperfect} and \cite{rochet2003platform} introduce general frameworks and provide in-depth analysis. The impacts of single/multi-homing users on the market equilibria have been investigated in \cite{armstrong2007two}. Theoretical studies on dynamic platform competition \cite{dou2016dynamicplatformcomp} and spatial platform competition \cite{kodera2010spatialplatformcomp} in two sided markets further provide insights towards competition in ride-sharing markets. Besides these, scholars examine the competition between ride-sharing and taxis \cite{cramer2016ubertaxi,mcgregor2015ubertaxi}, where Uber is considered to be a monopoly. These works however do not capture the competition among ride-sharing platforms, yet ride-sharing markets are rather oligopolies in many countries\cite{ubernotmonopoly}. Accordingly, a recent work \cite{jiang2018ridesharing} presents a head-to-head comparison of
Uber, Lyft, and taxis using statistical methods.
Another line of work related to ours focuses on the benefits of spatial price discrimination \cite{bimpikis2019spatial} and dynamic pricing in ride-sharing networks \cite{castillo2017surge,banerjee2015pricing}. These however do not study a competitive market. Closest to our work is \cite{nikzad2017thickness}, which studies the effects of thickness (i.e., the mass of drivers) and competition on the equilibria of ride-sharing markets. It shows that competition always increases the welfare of the drivers, whereas it decreases the welfare of the customers
if the market is not sufficiently thick.

To the best of our knowledge, there is no existing work on competition in electric AMoD systems. Our study aims to form the bridge between AMoD and competition literature with our theoretical findings. \bt{We hope that the closed form bounds quantifying the impacts of competition would help investors make informed policy decisions about competing AMoD platforms and investing in efficient AMoD technologies.} 
\section{System Model and Problem Definition}\label{systemmodel}
\noindent
\textbf{Network and Demand Models:}
We consider two fleets of AMoD EVs operated by two competitors within a transportation network characterized by a \bt{complete} graph consisting of $\bt{\cal N} = \{1,\ldots,\bt{n}\}$ nodes. Each of these nodes can  serve as a trip origin or destination.  

\begin{figure}[t]
     \centering
     \hspace{-.5cm}
     \begin{subfigure}[b]{0.25\textwidth}
         \centering
         \includegraphics[width=1\textwidth]{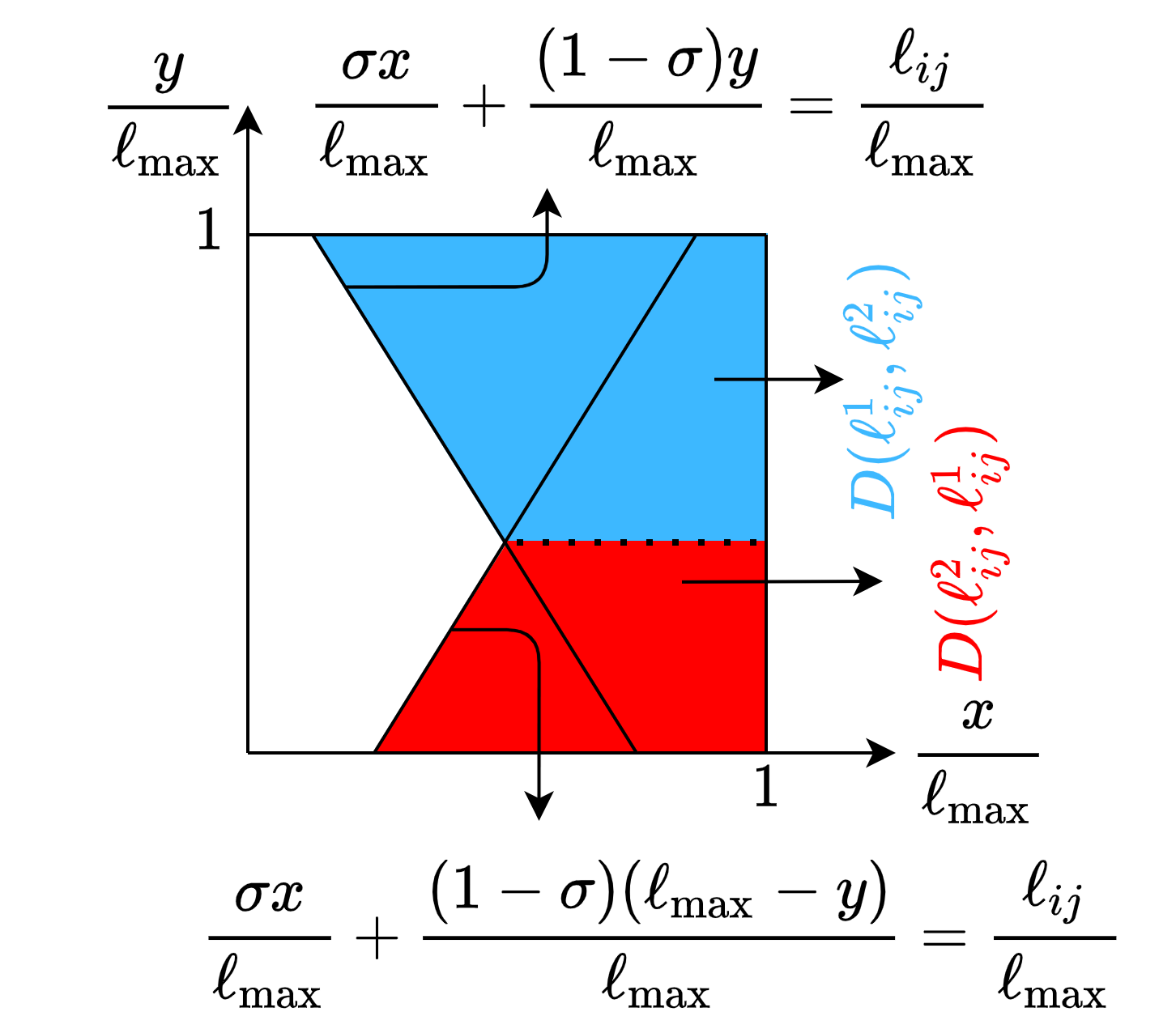}
         \caption{Duopoly demand functions}
         \label{fig:duopolydemand}
     \end{subfigure}
     \begin{subfigure}[b]{0.25\textwidth}
         \centering
         \includegraphics[width=1\textwidth]{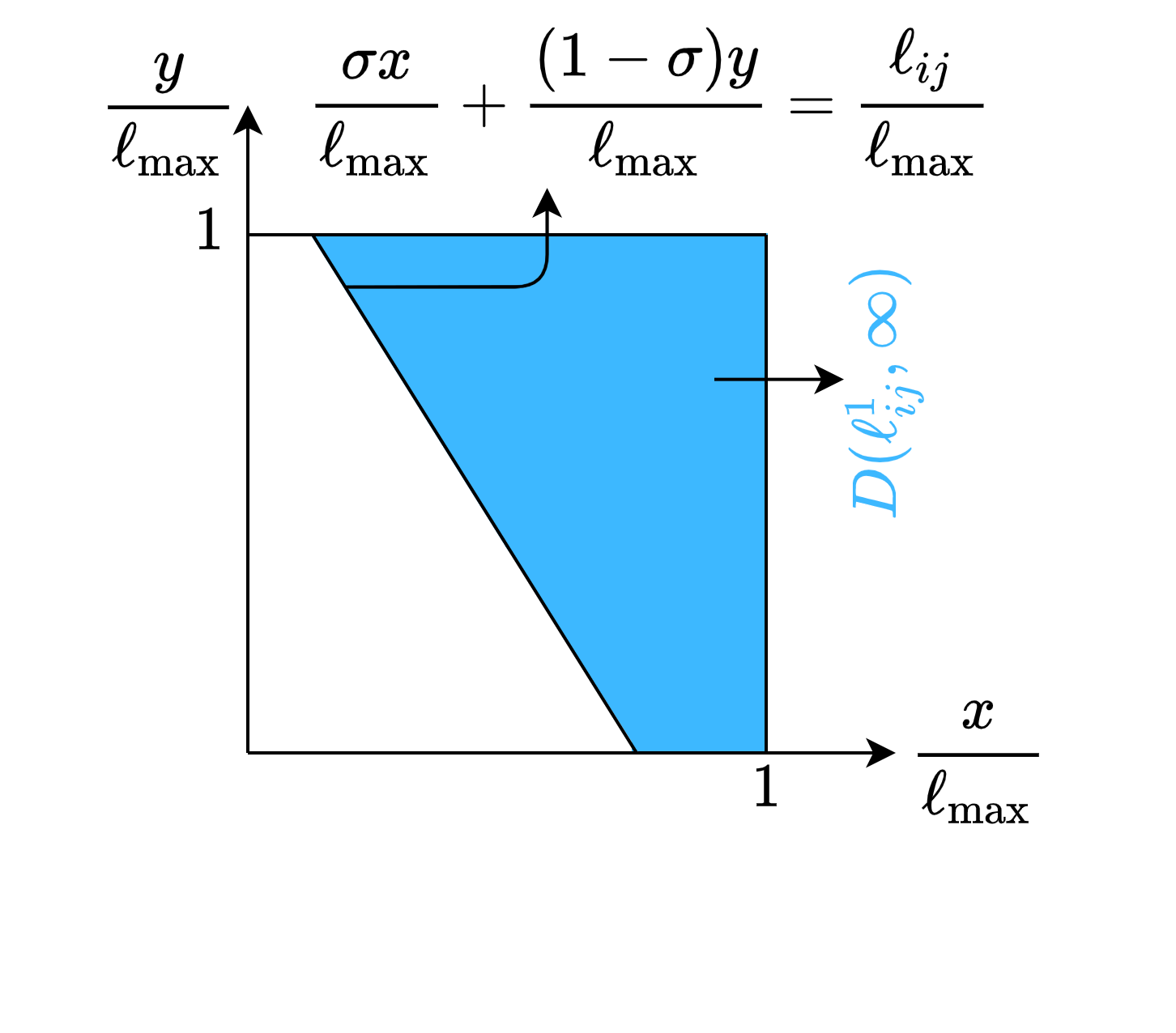}
         \caption{Monopoly demand function}
         \label{fig:monopolydemand}
     \end{subfigure}
     \hspace{-0.5cm}
     \vspace{7pt}
        \caption{Graphical illustration of the demand functions for (a) duopoly, and (b) monopoly. \btt{The axes correspond to the uniform random variables $x$ and $y$ scaled by $1/\lmax$.} In duopoly, the line $\sigma x+(1-\sigma)y=\ell_{ij}^1$ corresponds to the customers who earn $0$ pay-off buying a ride from firm 1, and the line $\sigma x+(1-\sigma)(\lmax-y)=\ell_{ij}^2$ corresponds to the customers who earn $0$ pay-off buying a ride from firm 2. As such, for the price tuple $(\ell_{ij}^1,\ell_{ij}^2)$, the blue shaded area corresponds to the \btt{demand function} for firm 1, whereas the red  corresponds to the \btt{demand function} for firm 2. Monopoly is the special case of duopoly, where the prices for rides set by firm 2 are set to infinity: $\ell_{ij}^2=\infty$.}
        \label{fig:demandfunctions}
\end{figure}
We study a discrete-time system with time periods normalized to integral units $t\in\{0,1,2,\dots\}$. In each period, potential riders of mass $\theta_{ij}$ seek rides between origin-destination (OD) pair $(i,j)$, where $\theta_{ii}=0$. We assume that customers have different valuations for riding with each firm, represented by the tuple $(v_1,v_2)$ where $v_f$ is the customer's valuation for firm $f$. To capture customer heterogeneity, we let $(v_1,v_2)\sim \cal V$, where $\cal V$ denotes the PDF of the joint distribution with support $[0,\ell_{\max}]^2$. Here, $\ell_{\max}$ is the maximum valuation of the customers for both firms, i.e. the maximum willingness to pay\footnote{For brevity of notation, we uniformly set $\ell_{\max}$ to be the maximum willingness to pay for all OD pairs without loss of generality. Our results can be derived in a similar fashion by replacing $\lmax$ with $\lmax^{ij}$, where $\lmax^{ij}$ is the maximum willingness to pay for OD pair $(i,j)$.}. To characterize the distribution $\cal V$, we adopt the  model proposed by \cite{nikzad2017thickness} and assume that the distribution of the
\pagebreak 
random variables $(v_1,v_2)$ is defined implicitly through:
\begin{align}
    \label{eq:valuations}
    &v_1=\sigma x+(1-\sigma)y,\\
   \label{eq:valuations2} &v_2=\sigma x+(1-\sigma)(\lmax-y),
\end{align}
where $x$ and $y$ are iid uniform random variables with support $[0,\ell_{\max}]$ and $\sigma\in [0,1]$. We refer to $x$ as the \emph{common value component} and $y$ as the \emph{idiosyncratic} component, with $\sigma$ as the measure of correlation over customers' preferences\bt{\footnote{\bt{In the monopolistic setting, $\sigma$ measures the correlation between customers' valuation of riding with the monopolistic firm and customers' valuation of riding with outside options (e.g., public transport).}}}. \bt{In particular, $x$ can be viewed as a customer's valuation of the ride itself and $y$ (or $\lmax-y$) can be viewed as a customer's valuation of firm 1 itself (or firm 2 itself). A customer is identified by the draws from distributions of $x$ and $y$, which are then mapped to that customer's valuations for riding with firms $1$ and $2$ via \eqref{eq:valuations} and \eqref{eq:valuations2}.}  A customer with valuations $(v_1,v_2)$ makes a decision upon observing the prices for rides. If the prices for rides between OD pair $(i,j)$ in period are set to be $\ell_{ij}^1$ and $\ell_{ij}^2$ by firm 1 and 2, respectively, the customer buys a ride from firm $f$ if $v_f-\ell_{ij}^f>0$ and $v_f-\ell_{ij}^f>v_{-f}-\ell_{ij}^{-f}$ (given firm $f$, $-f$ denotes the other firm), i.e., the customer gains a positive pay-off for purchasing a ride from firm $f$ and this pay-off is higher than the pay-off that the customer would gain by buying from the other firm. Otherwise they do not buy a ride from either of the firms and leave the system. Hence, for a price tuple $(\ell_{ij}^1,\ell_{ij}^2)$ for OD pair $(i,j)$, the induced mass of arrivals for firm $f$ is given by $\Theta_{ij}^f\eqdef\theta_{ij}D(\ell_{ij}^f,\ell_{ij}^{-f})$, where $D:[0,\ell_{\max}]^2\rightarrow[0,1]$ is the \emph{demand function} of customers which determines the fraction of customers that would buy a ride from firm $f$ upon observing the prices. This function has a simple geometric interpretation depicted in Figure~\ref{fig:demandfunctions}. \btt{We plot the demand function and the willingness to pay distribution as a function of ride prices for several values of $\sigma$ in the monopolistic setting in Figure~\ref{fig:demand_wtp}. Note that the demand function is concave if $\lija<(1-\sigma)\lmax$, is linear if $(1-\sigma)\lmax\leq\lija<\sigma\lmax$, and is convex if $\sigma\lmax\leq\lija$.}

\begin{figure}
    \centering
    \hspace*{-.5cm}\includegraphics[width=.55\textwidth]{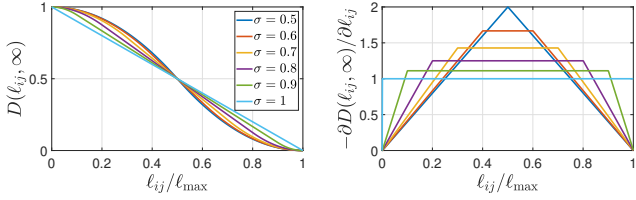}
    \caption{Demand function (left) and willingness to pay distribution (right) as a function of ride prices for several values of $\sigma\in[0.5,1]$.}
    \label{fig:demand_wtp}
\end{figure}

\noindent \textbf{Vehicle Model:} In order to best serve its customers and maximize its profits, each operator needs to dispatch its fleet, including vehicle routing and charging. 
To implicitly capture the effect of trip demand and the associated charging and routing decisions on the fleet size and hence the operational costs incurred by each operator, we assume that each vehicle in charging or trip-making mode has a per period operational cost of $\beta_c$ and $\beta_t$, respectively. A trip-making vehicle can either be occupied by a customer, which we refer to as a customer carrying vehicle; or can be empty, which we refer to as a rebalancing vehicle. \bt{We note that in this work, we set the capacity of a vehicle to be one passenger.} Furthermore, as the vehicles are electric, they have to sustain charge in order to operate, which needs to be purchased from the power grid. Without loss of generality, we assume there is a charging station placed at each node $i \in \bt{\cal N}$. To charge at node $i$, the operator pays a price of electricity $p_i$ per unit of energy.  
We assume that all EVs in the fleet have a battery capacity denoted as $e_{\max}\in \mathbb Z^+$; therefore, each EV has a discrete battery energy level $e \in \mathcal E$, where $\mathcal E = \{e\in \mathbb{N}| 0\leq e \leq e_{\max}\}$. In our discrete-time model, we assume each vehicle takes one period to charge one unit of energy and $\tau_{ij}$ periods to travel between OD pair $(i,j)$, while consuming $e_{ij}$ units of energy.


\noindent
\textbf{Platform Operator's Problem:} We consider a profit-maximizing AMoD operator that manages a  fleet of EVs that make trips to provide transportation services to customers. The operator's goal is to maximize profits by 1) setting prices for rides and hence managing customer demand at each node; 2) optimally operating the AMoD fleet (i.e., charging and  routing) to minimize operational and charging costs. Next, we study the static planning problem for both the monopoly and the duopoly settings in order to characterize the optimal static prices and to examine the effects of competition in electric AMoD systems.  


\section{Analysis of the Static Problem}\label{sec:static}
In this section, we establish and discuss the static planning problems considering a single operator (i.e., monopoly) and two competing operators (i.e., duopoly) in order to study the effect of competition in an electric AMoD system. We consider the fluid scaling of the network and characterize the static planning problem via a network flow formulation. 
The static problem is convenient for determining the optimal static pricing, routing, and charging policy of the platform operator. 

\vspace{-.1cm}
\subsection{Monopoly Static Planning Problem}
We define the monopoly to be the setting where the firm $2$ is removed. In order to make the comparison between the monopoly and the duopoly consistent, we keep the customer behaviour and the demand function $D$ same. Hence, removing firm $2$ from the system is equivalent to setting prices for rides posted by firm $2$ to be $\infty$, and the induced demand for rides for OD pair $(i,j)$ to be $D(\lija,\infty)$ for a given $\lija$.

The goal of the platform operator is to maximize its profits by setting prices for rides and making routing and charging decisions such that the induced demand is \bt{served.} Let $x_{ij}^e$ be the number of vehicles at node $i$ with energy level $e$ being routed to node $j$ and $x_{ic}^e$ be the number of vehicles charging at node $i$ and currently at energy level $e$. We state the platform operator's problem as follows:

\begin{subequations}
\label{eq:flowoptimization}
\begin{align}
\nonumber&\underset{x_{ic}^e,x_{ij}^e,\ell_{ij}^1}{\text{max}}
\nonumber& &\hspace{-.4cm}\sum_{i=1}^\bt{n}\sum_{j=1}^\bt{n}\theta_{ij}\ell_{ij}^1D(\ell_{ij}^1,\infty)\\
& & &\hspace{-.6cm}-\sum_{i=1}^\bt{n}\hspace{-.1cm}\sum_{e=0}^{e_{\max}-1}\hspace{-.1cm} (\beta_c{+}p_i) x_{ic}^e
\label{eq:staticobjective}{-}\beta_t \sum_{i=1}^\bt{n}\sum_{j=1}^\bt{n}\sum_{e=e_{ij}}^{e_{\max}} x_{ij}^e\tau_{ij} \\
& \text{subject to}
\label{eq:staticconst1}& &\theta_{ij}D(\ell_{ij}^1,\infty) \leq \sum_{e=e_{ij}}^{e_{\max}}x_{ij}^e \quad\forall i,j \in \bt{\cal N},\\
\nonumber& & & x_{ic}^e+\sum_{j=1}^\bt{n} x_{ij}^e=x_{ic}^{e-1}+\sum_{j=1}^\bt{n} x_{ji}^{e+e_{ji}},\\
\label{eq:staticconst2}& & &\hspace{3.5cm}\forall i\in\bt{\cal N},\;\forall e\in\mathcal E,\\
\label{eq:staticconst5}& & & x_{ic}^{e_{\max}}=0,\quad \forall i\in \bt{\cal N},\\
\label{eq:staticconst6}& & & x_{ij}^e=0,\quad \forall e<e_{ij},\; \forall  i,j\in \bt{\cal N},\\
\label{eq:staticconst7}& & & x_{ic}^e\geq 0, \; x_{ij}^e\geq 0,\; ~\forall i,j\in \bt{\cal N},\;\forall e\in\mathcal E,\\
\label{eq:staticconst8}& & & x_{ic}^e=x_{ij}^e=0,\quad \forall e\notin\mathcal E,\; \forall  i,j\in \bt{\cal N}.
\end{align}
\end{subequations}

\bt{The objective function \eqref{eq:staticobjective} corresponds to the profits earned by the firm per period. In particular, the first term in \eqref{eq:staticobjective}} accounts for the aggregate revenue the platform generates by providing rides for $\theta_{ij}D(\ell_{ij}^1,\infty)$ number of riders with a price of $\lija$.
The second term is the operational and charging costs incurred by the charging vehicles, and the last term is the operational costs of the trip-making vehicles.

The constraint \eqref{eq:staticconst1} requires the platform to operate at least as many vehicles to serve all the induced demand between any two nodes $i$ and $j$
(The rest are the vehicles travelling without passengers, i.e., rebalancing vehicles). We will refer to this as the {\it demand satisfaction constraint}. \bt{We let $\lambda_{ij}$ be the dual variable associated with \eqref{eq:staticconst1} and $\lambda_{ij}^m$ be the optimal dual variable.}
The constraint \eqref{eq:staticconst2} is the  \textit{flow balance constraint} for each node and each battery energy level, which restricts the number of available vehicles at node $i$ and energy level $e$ to be the sum of arrivals from all nodes and vehicles that are charging with energy level $e-1$. The constraint \eqref{eq:staticconst5} ensures that the vehicles with full battery do not charge further, and the constraint \eqref{eq:staticconst6} ensures the vehicles sustain enough charge to travel between OD pair $(i,j)$.

\bt{It is worthwhile to mention that unlike traditional minimum-cost flow problems, where the objective is to minimize total travel cost, the objective of \eqref{eq:flowoptimization} is to maximize the total revenue minus the costs, i.e., profits. Furthermore, in traditional minimum-cost flow problems, demand elasticity in response to price is not explicit and the elasticity is often modeled in response to travel times \cite{dafermos1982general,yang1997sensitivity}, whereas the explicit dependency of the induced demand to prices via $D(\lija,\infty)$ results in a more challenging task. The prices affect the induced demand, which affects the routing decisions and this causes a complex interplay between the decision variables.}

\noindent\textbf{Optimal Pricing:} The prices for rides  are a crucial component of the profits generated. The next proposition highlights how the optimal prices $\ell_{ij}^{m}\eqdef \ell_{ij}^{1*}$ for rides are related to the network parameters, prices of electricity, and the  operational costs.  In the following results, we investigate this interconnection by providing upper bounds on the prices that a profit-maximizing monopolist may charge customers, as well the corresponding profits generated. We highlight the fact that the monopolist's profits are in fact a decreasing function of the optimal prices for rides. The higher the monopolist has to charge its customers, the lower its generated profits. This could be a motivation for the monopolist to invest in efficient vehicle technology and cheap charging solutions.

\begin{proposition}\label{prop:monomarginalprices}
Define 
$$\overline{\lambda}_{ij}\eqdef\beta_t(\tau_{ij}+\tau_{ji})+e_{ij}(p_j+\beta_c)+e_{ji}(p_i+\beta_c).$$ 
Let $\lambda_{ij}^{m}$ be \bt{the optimal} dual variable corresponding to the demand satisfaction constraint \bt{\eqref{eq:staticconst1}} for OD pair $(i,j)$. The optimal monopoly prices $\ell_{ij}^m$ are:
\begin{equation}
\label{eq:optimalprices}
    \lijm{=}\left\{
\begin{array}{ll}
      \frac{\lambda_{ij}^{m}{+}\sqrt{({\lambda_{ij}^m})^2+6\sigma(1{-}\sigma)\lmax^2}}{3}&,\frac{\dijm}{\lmax}<\frac{3-5\sigma}{2} \\[1ex]
      \frac{(1+\sigma)\lmax+2\dijm}{4}&,\frac{3-5\sigma}{2}\leq \frac{\dijm}{\lmax}< \frac{3\sigma-1}{2} \\[1ex]
      \frac{2\dijm+\lmax}{3}&,\frac{3\sigma-1}{2}\leq \frac{\dijm}{\lmax}\leq1. \\
\end{array} 
\right. 
\vspace{-.2cm}
\end{equation}   
These prices can be upper bounded by:
\begin{equation}
\label{eq:monoboundprices}
    \lijm\leq\left\{
\begin{array}{ll}
      \frac{\overline{\lambda}_{ij}+\sqrt{(\overline{\lambda}_{ij})^2+6\sigma(1-\sigma)\lmax^2}}{3}&,\frac{\overline{\lambda}_{ij}}{\lmax}< \frac{3-5\sigma}{2} \\[1ex]
      \frac{(1+\sigma)\lmax+2\overline{\lambda}_{ij}}{4}&,\frac{3-5\sigma}{2}\leq \frac{\overline{\lambda}_{ij}}{\lmax}< \frac{3\sigma-1}{2} \\[1ex]
      \frac{2\overline{\lambda}_{ij}+\lmax}{3}&,\frac{3\sigma-1}{2}\leq \frac{\overline{\lambda}_{ij}}{\lmax}\leq1. \\
\end{array} 
\right. 
\vspace{-.2cm}
\end{equation} 
\end{proposition}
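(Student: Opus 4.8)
The plan is to derive \eqref{eq:optimalprices} by a first-order analysis of the optimization \eqref{eq:flowoptimization}, treating $\lija$ as the decision variable whose induced demand $D(\lija,\infty)$ couples to the flow constraints through the dual variable $\dijm$. First I would write the Lagrangian of \eqref{eq:flowoptimization} and observe that, since the routing/charging variables enter linearly and the price $\lija$ enters only the objective term $\theta_{ij}\lija D(\lija,\infty)$ and the constraint \eqref{eq:staticconst1}, stationarity in $\lija$ gives
\begin{equation*}
\frac{\partial}{\partial\lija}\Bigl[\theta_{ij}\lija D(\lija,\infty)-\dij\,\theta_{ij}D(\lija,\infty)\Bigr]=0,
\end{equation*}
i.e. $D(\lija,\infty)+(\lija-\dij)D'(\lija,\infty)=0$ evaluated at the optimum $\lija=\lijm$, $\dij=\dijm$. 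So the entire problem reduces, OD-pair by OD-pair, to maximizing $(\lija-\dijm)D(\lija,\infty)$ over $\lija\in[0,\lmax]$, where $\dijm$ plays the role of a per-ride marginal cost. The key preliminary step is therefore to make the monopoly demand function $D(\lija,\infty)$ explicit from the geometry in Figure~\ref{fig:demandfunctions}: setting $v_2$-related terms aside, the customer buys iff $\sigma x+(1-\sigma)y>\lija$, so $D(\lija,\infty)=\Pr[\sigma x+(1-\sigma)y>\lija]$ with $x,y$ iid uniform on $[0,\lmax]$. This is a standard "area of a half-plane intersected with a square" computation, and — as already noted in the text after Figure~\ref{fig:demand_wtp} — it is piecewise: concave (a corner-triangle complement) for $\lija<(1-\sigma)\lmax$, linear for $(1-\sigma)\lmax\le\lija<\sigma\lmax$, and convex for $\lija\ge\sigma\lmax$ (assuming $\sigma\ge 1/2$; the other ordering is symmetric).

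Next I would substitute each of the three pieces of $D(\cdot,\infty)$ into the scalar objective $g(\lija)\eqdef(\lija-\dijm)D(\lija,\infty)$ and solve $g'(\lija)=0$ on the relevant interval. On the linear middle piece this yields a quadratic with the closed-form root $\frac{(1+\sigma)\lmax+2\dijm}{4}$; on the convex (large-price) piece, where $D$ is a scaled $(\lmax-\lija)^2$, the stationarity condition is again solvable and gives $\frac{2\dijm+\lmax}{3}$; on the concave (small-price) piece, where $D$ is of the form (const $-$ quadratic in $\lmax-\lija$), one gets a genuine quadratic in $\lija$ whose admissible root is the surd expression $\frac{\dijm+\sqrt{(\dijm)^2+6\sigma(1-\sigma)\lmax^2}}{3}$. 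The thresholds $\frac{3-5\sigma}{2}$ and $\frac{3\sigma-1}{2}$ in \eqref{eq:optimalprices} are then obtained by checking which branch's stationary point actually lands in its own sub-interval: e.g. the convex-branch optimizer $\frac{2\dijm+\lmax}{3}$ lies in $[\sigma\lmax,\lmax]$ exactly when $\dijm/\lmax\ge\frac{3\sigma-1}{2}$, and similarly for the boundary $\frac{3-5\sigma}{2}$. I would also note monotonicity: $\lijm$ is increasing and continuous in $\dijm$, so the three branches glue together at the thresholds, which both validates the case split and is what makes "profit is a decreasing function of the optimal price" meaningful.

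For the second claim \eqref{eq:monoboundprices}, the idea is that $\dijm$ itself is bounded above by $\overline{\lambda}_{ij}$. Here I would construct a feasible "round trip" dual certificate or, equivalently, exhibit a primal perturbation: serving one extra unit of demand on $(i,j)$ can always be accommodated by sending one vehicle $i\to j$ loaded and one vehicle $j\to i$ empty (rebalancing), plus recharging the $e_{ij}$ units spent going out at node $j$ (cost $e_{ij}(p_j+\beta_c)$) and the $e_{ji}$ units spent coming back at node $i$ (cost $e_{ji}(p_i+\beta_c)$), with trip operating cost $\beta_t(\tau_{ij}+\tau_{ji})$ — total exactly $\overline{\lambda}_{ij}$. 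Since $\dijm$ is the marginal value of relaxing \eqref{eq:staticconst1}, it cannot exceed the marginal cost of this explicit fulfillment scheme, so $\dijm\le\overline{\lambda}_{ij}$. Because each branch of \eqref{eq:optimalprices} is nondecreasing in $\dijm$ and the branch thresholds are also monotone in the argument, replacing $\dijm$ by the larger $\overline{\lambda}_{ij}$ and re-evaluating the (now $\overline{\lambda}_{ij}$-indexed) case split yields \eqref{eq:monoboundprices}; one must check the one subtlety that moving to a higher branch never decreases the value, which follows from continuity at the thresholds established above. The main obstacle I anticipate is not any single computation but getting the case analysis airtight — verifying that each stationary point is (a) in its interval precisely on the stated $\dijm$-range, (b) a maximizer rather than a boundary case, and (c) consistent with the neighboring branches — together with cleanly justifying $\dijm\le\overline{\lambda}_{ij}$ without a full dual feasibility argument. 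The demand-function geometry for $\sigma<1/2$ versus $\sigma\ge1/2$ also needs a remark so that the three-way split is exhaustive.
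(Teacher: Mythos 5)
Your proposal is correct and follows essentially the same route as the paper: stationarity of $(\lija-\dij)D(\lija,\infty)$ for each OD pair with the dual variable of \eqref{eq:staticconst1} acting as the marginal cost, the explicit three-piece demand function, and a case analysis locating each branch's root in its own sub-interval to produce the thresholds $\frac{3-5\sigma}{2}$ and $\frac{3\sigma-1}{2}$. The one spot where you hedge --- justifying $\dijm\le\overline{\lambda}_{ij}$ --- is handled in the paper by exactly the ``full dual feasibility argument'' you mention as the alternative: chaining the dual constraints $\lambda_{ij}+\mu_i^e-\mu_j^{e-e_{ij}}\le\beta_t\tau_{ij}$ and $\mu_i^e-\mu_i^{e+1}\le p_i+\beta_c$ around the loaded trip, the recharge at $j$, the empty return, and the recharge at $i$, which is the formal counterpart of your round-trip fulfillment certificate.
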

The proof can be found in Appendix~\ref{sec:proofprop1}. We can interpret the dual variables $\lambda_{ij}^{m}$ as the cost of providing a single ride between $i$ and $j$ to the platform.  In the worst case scenario, every single requested ride from node $i$ requires rebalancing and charging both at the origin and the destination. Hence the upper bounds on \eqref{eq:monoboundprices} include the operational costs of passenger-carrying, rebalancing and charging vehicles (both at the origin and the destination); and the energy costs of both passenger-carrying and rebalancing trips multiplied by the price of electricity at the trip destinations (This is exactly what $\overline{\lambda}_{ij}$ consists of).

Similar to the taxes applied on products, whose burden is shared among the supplier and the customer; the costs associated with rides are shared among the platform operator and the riders (which is why the price paid by the riders include some fraction of the cost of the ride).

\bt{We note that if the optimal dual variables $\lambda_{ij}^m$ fall in the region $[(3-5\sigma)\lmax/2,(3\sigma-1)\lmax/2]$, then the optimal prices given by \eqref{eq:optimalprices} fall in the region $[(1-\sigma)\lmax,\sigma\lmax]$. In this region, the demand function $D(\lijm,\infty)$ is linear. Hence, the optimization problem \eqref{eq:flowoptimization} (with the additional constraint $(1-\sigma)\lmax\leq\lija\leq\sigma\lmax,~\forall i,j\in{\cal N}$, without losing global optimality) becomes a convex quadratic program and can be solved in polynomial time. The following assumptions guarantee this:} 
\bt{
\begin{assumption}
\label{ass:sigma}
Assume that $\sigma\geq 3/5$, i.e., the customers' preferences over the two firms are highly correlated.
\end{assumption}
}
\begin{assumption}
\label{ass:sigmalambda}
We assume $\underset{i,j}{\max} ~\overline{\lambda}_{ij}\leq\frac{(3\sigma-1)(3-\sigma)}{4(5-3\sigma)}\ell_{\max}$ as an upper bound on the maximum cost of a ride in the network.
\end{assumption}
\begin{remark}
\bt{Assumption~\ref{ass:sigma} implies that at least $3/5$ ($60\%$) of the customers' valuations between the firms are correlated. Higher correlation implies that riders' valuations of the rides provided by a firm depend less on the identity of the firm. This is reasonable for ride-sharing platforms, where majority of the customers decide depending heavily on the price rather than the identity of the firm.\\
Assumption~\ref{ass:sigmalambda} imposes an upper bound on the maximum cost of a ride. }This can be satisfied in practice, especially with electric vehicles. Observe that the bound is increasing with $\sigma$, hence it is tightest when $\sigma=3/5$. To give numbers with a simple calculation, consider a network with farthest OD pair of $15$ miles and $30$ minutes away (with average speed $30 mph$), $\sigma=3/5$ and $\lmax=\$50$. An average EV consumes 34kWh energy to drive for 100 miles. For an average price of electricity of $\$0.11$ per kWh and a charger with $\bt{20kW}$ charging speed, the EV charges $10kWh$ in $30$ minutes for $\$1.1$, that allows for 30 miles of range. If we \bt{amortize} the cost of a very expensive EV of $\$100k$ over 5 years, we get per minute operational cost of $\$0.04$. In total, to do the trip and the rebalancing, the vehicle drives for $30$ miles for $1$ hour and charges for $30$ minutes. In total, this yields a cost of $90\times\$0.04+\$1.1=\$4.7=\underset{i,j}{\max} ~\overline{\lambda}_{ij}\leq\frac{(3\sigma-1)(3-\sigma)}{4(5-3\sigma)}=\$7.5$. Whereas the fuel for gasoline vehicles costs about 4 times more (around $\$0.16$ per mile), which would yield $\underset{i,j}{\max} ~\overline{\lambda}_{ij}=\$8.00$.
\end{remark}

Next, we relate the optimal prices $\ell_{ij}^{m}$ to the profits generated by the operator and the consumer surplus. \bt{The profits are defined by the objective function in \eqref{eq:staticobjective}. The consumer surplus is defined as the difference between the price that customers pay and the price that they are willing to pay, i.e., the aggregate pay-off of the customers.}
\begin{proposition}\label{prop:monoprofitcs}
\bt{Suppose that Assumptions~\ref{ass:sigma} and \ref{ass:sigmalambda} hold.} With the optimal monopoly prices $\lijm$, the profits per period are:
\begin{equation}
    \label{eq:monoprofits}
    P^m=\sum_{i=1}^\bt{n}\sum_{j=1}^\bt{n}\frac{\theta_{ij}}{4\sigma\ell_{\max}}(\ell_{\max}(1+\sigma)-2\ell_{ij}^m)^2.
\end{equation}
The consumer surplus with the optimal prices is:
\begin{equation}
    \label{eq:monocs}
    \textnormal{CS}^m{=}\sum_{i=1}^\bt{n}\sum_{j=1}^\bt{n}\theta_{ij}\frac{\lmax(\sigma^2{+}\sigma{+}1){-}3\lijm(1{+}\sigma{-}\frac{\lijm}{\lmax})}{6\sigma}.
\end{equation}
\end{proposition}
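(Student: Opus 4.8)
The plan is to treat the profit and the surplus separately; both become explicit one–variable computations once we know that each optimal price lies in the \emph{linear} branch of the demand function. By the remark following Assumption~\ref{ass:sigmalambda}, Assumptions~\ref{ass:sigma}--\ref{ass:sigmalambda} force $\lambda_{ij}^m\in[(3-5\sigma)\lmax/2,\,(3\sigma-1)\lmax/2]$ and hence $\lijm\in[(1-\sigma)\lmax,\,\sigma\lmax]$ for every OD pair $(i,j)$. In that range the geometry of Figure~\ref{fig:demandfunctions} with $\ell_{ij}^2=\infty$ gives the linear form $D(\lijm,\infty)=\frac{(1+\sigma)\lmax-2\lijm}{2\sigma\lmax}$, and inverting the middle case of \eqref{eq:optimalprices} gives $\lambda_{ij}^m=2\lijm-\tfrac{(1+\sigma)\lmax}{2}$, so that $\lijm-\lambda_{ij}^m=\tfrac12\big((1+\sigma)\lmax-2\lijm\big)$. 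These are the only facts about the optimum that the argument uses.

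\emph{Profit.} Write the objective \eqref{eq:staticobjective} at the optimum as (aggregate revenue) minus (optimal routing/charging cost). With the prices frozen at $\lijm$, the residual minimization over $\{x_{ic}^e,x_{ij}^e\}$ subject to \eqref{eq:staticconst1}--\eqref{eq:staticconst8} is a linear min-cost-flow program in which \eqref{eq:staticconst1} reads $\sum_e x_{ij}^e\ge\Theta_{ij}^m$ with $\Theta_{ij}^m\eqdef\theta_{ij}D(\lijm,\infty)$, while every other constraint is homogeneous (zero right-hand side). By LP strong duality its optimal value is therefore $\sum_{i,j}\lambda_{ij}^m\Theta_{ij}^m$, the multipliers $\lambda_{ij}^m$ being exactly the optimal duals of \eqref{eq:staticconst1} in \eqref{eq:flowoptimization}. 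Hence $P^m=\sum_{i,j}\Theta_{ij}^m\big(\lijm-\lambda_{ij}^m\big)$, and substituting $D(\lijm,\infty)$ and $\lijm-\lambda_{ij}^m$ from the previous paragraph collapses the summand to $\frac{\theta_{ij}}{4\sigma\lmax}\big((1+\sigma)\lmax-2\lijm\big)^2$, which is \eqref{eq:monoprofits}.

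\emph{Consumer surplus.} By definition $\textnormal{CS}^m$ is the aggregate customer pay-off; since in the monopoly a customer of type $(v_1,v_2)$ buys iff $v_1>\lijm$ and then gains $v_1-\lijm$, we get $\textnormal{CS}^m=\sum_{i,j}\theta_{ij}\,\mathbb{E}\big[(v_1-\lijm)^+\big]$ with $v_1=\sigma x+(1-\sigma)y$ and $x,y$ i.i.d.\ uniform on $[0,\lmax]$. Using $\mathbb{E}[(v_1-\ell)^+]=\int_\ell^{\lmax}\Pr[v_1>w]\,dw$ and the three-piece expression for $\Pr[v_1>w]$ (quadratically decreasing on $[0,(1-\sigma)\lmax]$, the linear form $D(w,\infty)$ on $[(1-\sigma)\lmax,\sigma\lmax]$, and — by the distributional symmetry $v_1\stackrel{d}{=}\lmax-v_1$ — quadratically decreasing on $[\sigma\lmax,\lmax]$), the integral from $\lijm$ splits into the linear piece on $[\lijm,\sigma\lmax]$ plus the quadratic tail on $[\sigma\lmax,\lmax]$. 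Evaluating the two pieces and collecting terms yields $\mathbb{E}[(v_1-\lijm)^+]=\frac{\lmax(\sigma^2+\sigma+1)-3\lijm(1+\sigma-\lijm/\lmax)}{6\sigma}$, i.e.\ \eqref{eq:monocs}.

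The computations themselves — the algebraic collapse of the profit summand and the two elementary integrals for the surplus — are routine. The step that genuinely needs care is the identity ``optimal routing/charging cost $=\sum_{i,j}\lambda_{ij}^m\Theta_{ij}^m$'': it rests on LP strong duality for the inner flow problem, on the observation that only the demand-satisfaction constraint \eqref{eq:staticconst1} contributes a nonzero term to the dual objective, and on the standard envelope argument identifying the dual of \eqref{eq:staticconst1} in \eqref{eq:flowoptimization} with the dual in the price-frozen subproblem. I expect this to be the main (albeit standard) obstacle.
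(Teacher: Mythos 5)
Your proposal is correct and follows essentially the same route as the paper: both arguments rest on the fact that Assumptions~\ref{ass:sigma}--\ref{ass:sigmalambda} place $\lijm$ in the linear branch of $D(\cdot,\infty)$, both obtain the profit formula from the duality identity $P^m=\sum_{i,j}\theta_{ij}D(\lijm,\infty)(\lijm-\lambda_{ij}^m)$ (the paper via strong duality for the full convex program, you via LP duality on the price-frozen flow subproblem — the same fact, packaged slightly differently), and both reduce the consumer surplus to $\mathbb{E}[(v_1-\lijm)^+]$, which the paper evaluates as a double integral over $(x,y)$ and you as the equivalent tail integral $\int_{\lijm}^{\lmax}\Pr[v_1>w]\,dw$. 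The one step you rightly flag as needing care — identifying the optimal duals of \eqref{eq:staticconst1} in \eqref{eq:flowoptimization} with those of the inner LP — is handled correctly, so no gap remains.
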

The proof can be found in Appendix~\ref{app:profcsmono}. Notice that the profits in \eqref{eq:monoprofits} are decreasing as the prices for rides increase. Thus expensive rides generate less profits compared to the cheaper rides and it is more beneficial if the optimal dual variables $\lambda_{ij}^{m}$ are small and prices are close to $\ell_{\max}(1+\sigma)/4$. Thus, the operator has incentive to use more efficient routing and charging policies so they can lower ride prices as much as possible. Moreover, by computing $\frac{\partial \textnormal{CS}^m}{\partial \ell_{ij}^m}$ using \eqref{eq:monocs}, one identifies that lower prices generate higher consumer surplus, which is an intuitive result.


\vspace{-.1cm}
\subsection{Duopoly Static Planning Problem}
We study the duopoly as a game between two firms. At a high level, the game is described by firm $f$ observing firm $-f$'s prices and solving the optimization problem \eqref{eq:flowoptimization} (by considering firm $-f$'s prices to be $\ell_{ij}^{-f}$ rather than $\infty$ for the demand function).
We consider two competitors with identical operational costs $\beta_t$ and $\beta_c$, and study the optimal pricing strategy when the firms are at an equilibrium. In an equilibrium, no firm benefits from unilaterally changing the prices for any number of OD pairs (and as a result the optimal solution to their static planning problem). Given $\{\ell_{ij}^{-f}\}_{ \forall i,j\in{\bt{\cal N}}}$, the best response of firm $f$ is the best pricing, routing and charging strategy of $f$, which is the solution of \eqref{eq:flowoptimization} (with $\ell_{ij}^{-f}$ instead of $\infty$ in the demand function). Since the operational costs and the prices of electricity are identical for both of the firms, their best response to their competitor's prices are the same. As such, it is intuitive that there exists an equilibrium in which both firms set the prices equal ($\ell_{ij}^f=\ell_{ij}^{-f},\forall i,j\in {\bt{\cal N}}$), and we  show that this is in fact the case. Such an equilibrium is commonly referred to as a \emph{symmetric} duopoly equilibrium. Furthermore, we  show that  no asymmetric equilibria can exist under this setting, i.e., identical firms will not set different prices for the same OD pair at equilibrium.

Let the following static planning problem characterize the state in which both firms serve equal number of customers for all OD pairs and have identical pricing strategies:
\begin{subequations}
\label{eq:duopolyflowoptimization}
\begin{align}
\nonumber&\underset{x_{ic}^e,x_{ij}^e,\ell_{ij}^1}{\text{max}}
\nonumber& &\hspace{-.5cm}\sum_{i=1}^\bt{n}\sum_{j=1}^\bt{n}\theta_{ij}\ell_{ij}^1D(\ell_{ij}^1,\lijb)\Big\rvert_{\lija=\lijb}\\
& & &\hspace{-.7cm}-\sum_{i=1}^\bt{n}\hspace{-.1cm}\sum_{e=0}^{e_{\max}-1} \hspace{-.1cm}(\beta_c+p_i) x_{ic}^e
\label{eq:staticobjectiveduo}{-}\beta_t \sum_{i=1}^\bt{n}\sum_{j=1}^\bt{n}\sum_{e=e_{ij}}^{e_{\max}} \hspace{-.1cm}x_{ij}^e\tau_{ij} \\
& \text{subject to}
\label{eq:staticconst1duo}&  &\hspace{-.2cm}\theta_{ij}D(\ell_{ij}^1,\lijb)\Big\rvert_{\lija=\lijb} \hspace{-.1cm}\leq \hspace{-.1cm}\sum_{e=e_{ij}}^{e_{\max}}\hspace{-.1cm}x_{ij}^e, \quad\forall i,j \in \bt{\cal N},\\
\nonumber& & &\eqref{eq:staticconst2}-\eqref{eq:staticconst8}.
\end{align}
\end{subequations}
\bt{We note that the optimization problem \eqref{eq:duopolyflowoptimization} is in general non-convex due to $D(\lija,\lijb)$.} Since there are no constraints on the fleet size and furthermore prices that control the demand are decision variables, a feasible solution to the above optimization problem always exists.
Moreover, the optimal solution to \eqref{eq:duopolyflowoptimization} specifies an equilibrium of the duopoly.

\begin{proposition}\label{prop:unique}
    \bt{Suppose that $\sigma\geq 1/2$ and Assumption~\ref{ass:sigmalambda} holds.} The firms are in an equilibrium when their routing, charging, and symmetric pricing strategy follows the solution of \eqref{eq:duopolyflowoptimization}.
\end{proposition}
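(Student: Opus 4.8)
The plan is to verify that a solution $(\mathbf{x}^{d},\{\lijd\})$ of \eqref{eq:duopolyflowoptimization} is a mutual best response: if firm $2$ commits to the prices $\lijb=\lijd$ for all OD pairs, then $(\mathbf{x}^{d},\{\lija=\lijd\})$ solves firm $1$'s best-response problem, namely \eqref{eq:flowoptimization} with the constant $\lijd$ put into the demand function in place of $\infty$; the claim then follows by the symmetric argument with the two firms interchanged. Since the variables $x_{ic}^{e},x_{ij}^{e}$, the constraints \eqref{eq:staticconst2}--\eqref{eq:staticconst8}, and the cost terms coincide in the two programs — only the demand function entering \eqref{eq:staticconst1}/\eqref{eq:staticconst1duo} differs — I would first note that, for any fixed vector of served demands $q_{ij}$, the optimal routing/charging plan and the optimal dual $\lambda_{ij}$ of the demand-satisfaction constraint are identical in both programs (they solve the same min-cost-flow subproblem). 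Consequently, writing the KKT conditions of firm $1$'s problem with $\lijb=\lijd$ held fixed, optimality of the prices decouples across OD pairs: with $\dijd$ denoting the optimal dual produced by \eqref{eq:duopolyflowoptimization}, the price $\lija$ must solve the same first-order condition as the maximizer of $\theta_{ij}(\lija-\dijd)D(\lija,\lijd)$ over $\lija\in[0,\lmax]$. It therefore suffices to show that $\lija=\lijd$ is that maximizer; the served demand is then $\theta_{ij}D(\lijd,\lijd)$, which is exactly what $\mathbf{x}^{d}$ was designed to carry, so the routing/charging plan remains optimal and no deviation in $\mathbf{x}$ helps either.

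To establish the last point I would invoke $\sigma\geq 1/2$ and Assumption~\ref{ass:sigmalambda} to localize all the relevant prices to the ``linear'' regime of the demand function. The hypothesis $\sigma\geq 1/2$ guarantees that the price band on which $D(\,\cdot\,,\lijd)$ is affine in $\lija$ is nonempty, and — following the computation in the proof of Proposition~\ref{prop:monomarginalprices} — the bound $\max_{i,j}\overline{\lambda}_{ij}\leq\frac{(3\sigma-1)(3-\sigma)}{4(5-3\sigma)}\lmax$ forces both $\lijd$ and the maximizer of $\theta_{ij}(\lija-\dijd)D(\lija,\lijd)$ into that band with a strictly positive markup. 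On the affine branch the objective is a strictly concave quadratic in $\lija$, so its stationary point is its unique maximizer there; and the first-order condition characterizing $\lijd$ in \eqref{eq:duopolyflowoptimization} (a perturbation of firm $1$'s own price with the competitor's price held fixed, which is the content of the restriction $\lija=\lijb$ there) is exactly this stationarity condition, so the stationary point equals $\lijd$. It then remains only to compare the objective value at $\lijd$ with its values on the neighbouring concave branch $\lija<(1-\sigma)\lmax$ and convex branch $\lija\geq\sigma\lmax$ and check that neither exceeds it, which upgrades the conclusion to a global maximum over $[0,\lmax]$.

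The step I expect to be the main obstacle is precisely this global comparison: $D(\lija,\lijb)$ is only piecewise quadratic and is not concave in $\lija$ in general, so a priori firm $1$ could profit from a large deviation — undercutting into the concave branch to capture firm $2$'s customers together with the outside-option customers, or pricing so high on the convex branch that it serves only the residual high-valuation mass. Assumption~\ref{ass:sigmalambda} is exactly what neutralizes this: by keeping marginal costs small it pins the optimum strictly inside the affine branch, after which one must still show the reduced objective is monotone over each adjacent piece up to that branch so that no breakpoint of an adjacent piece can beat $\lijd$. A second, minor, subtlety is that a unilateral deviation can perturb demand on many OD pairs simultaneously and hence the flow constraints; this is taken care of automatically because the argument exhibits a full optimal solution of \eqref{eq:flowoptimization} rather than treating OD pairs one at a time.
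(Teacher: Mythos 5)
Your overall skeleton is the same as the paper's: solve \eqref{eq:duopolyflowoptimization} via its optimality conditions to get $\lijd$ and the dual $\dijd$, then verify that, with $\lijb=\lijd$ held fixed, firm $1$'s best-response problem (the analogue of \eqref{eq:flowoptimization} with $\lijd$ in place of $\infty$) is solved by $\lija=\lijd$, the point being that the stationarity condition $\frac{\partial D(\lija,\lijd)}{\partial \lija}(\lija-\dijd)+D(\lija,\lijd)=0$ coincides with the one characterizing $\lijd$ in \eqref{eq:duopolyflowoptimization}. Your observations about the decoupling across OD pairs and about exhibiting a full feasible flow rather than arguing OD pair by OD pair are correct and match what the paper does implicitly.

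However, the middle step of your plan rests on a mischaracterization of the duopoly demand function, and this is a genuine gap. You propose to localize $\lijd$ and the best-response maximizer to the ``linear'' band of $D$ so that $\theta_{ij}(\lija-\dijd)D(\lija,\lijd)$ becomes a strictly concave quadratic. That band exists only for the \emph{monopoly} demand $D(\lija,\infty)$ in \eqref{eq:demandfunctionsmonopoly}. For the duopoly, $D(\lija,\lijb)$ as a function of $\lija$ with $\lijb$ fixed has no affine piece: by \eqref{eq:demandduo0} it is a concave quadratic in $\lija$ for $\lija\leq(1-\sigma)\lmax$, and by \eqref{eq:demandduo20} it is a \emph{convex} quadratic in $\lija$ (leading coefficient $3/(8\sigma(1-\sigma)\lmax^2)$) above that threshold, so the best-response objective is a cubic on each piece. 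Moreover the proposition assumes only $\sigma\geq 1/2$, and when $\dijd/\lmax\leq 3(1-\sigma)^2/(2(\sigma+1))$ the equilibrium price from \eqref{eq:duooptprices} lies in $[\tfrac{1-\sigma}{2}\lmax,(1-\sigma)\lmax]$, squarely on the concave branch of $D$ — so the localization you want is false precisely in one of the two regimes the paper must (and does) treat. The paper instead works case by case on the two quadratic branches, solves the resulting FOC explicitly, and verifies the second-order condition \eqref{eq:secondoptduo} where needed. Finally, you correctly flag that a full proof should rule out large deviations onto the other branch (global rather than local optimality of the best response), but you leave that comparison undone; to be fair, the paper's own proof also stops at the first-order condition for the best-response problem, so on that particular point you are, if anything, more demanding than the published argument — but your proposed route to closing it (monotonicity up to the breakpoints of an affine branch) cannot be executed because the affine branch does not exist.
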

\bt{\textit{Proof outline: }We first determine the optimal pricing strategy $\{\ell_{ij}^d\}_{i,j\in{\cal N}}$ of \eqref{eq:duopolyflowoptimization} using the first and second order optimality conditions (similar to proof of Proposition 1). Then, \btt{by stating the first order optimality condition for firm $f$ we show that} when firm $-f$ sets prices as $\ell_{ij}^{-f}=\ell_{ij}^d$, $\forall i,j\in{\cal N}$, then the best response of firm $f$ is to set $\ell_{ij}^f=\ell_{ij}^d$, $\forall i,j\in{\cal N}$. Hence, they are in an equilibrium.\hfill$\square$}

The complete proof can be found in Appendix~\ref{app:unique}. Accordingly, there exists a duopoly equilibrium characterized as the optimal solution of \eqref{eq:duopolyflowoptimization}, in which the firms set identical prices. The optimal solution to \eqref{eq:duopolyflowoptimization} is however not necessarily unique and there can be many solutions yielding the same profits. For instance, 
if $p_i=p_j,\forall i,j\in\bt{\cal N}$, then the optimal charging strategy is not unique. We let $\{\ell_{ij}^d\}_{i,j\in\bt{\cal N}}$ to be the \bt{equilibrium} prices determined as an optimal solution of \eqref{eq:duopolyflowoptimization}
and say that the firms are in a symmetric duopoly equilibrium as long as $\ell_{ij}^1=\ell_{ij}^2=\ell_{ij}^d, \forall i,j\in\bt{\cal N}$. Furthermore, in the next proposition, we state that if both firms serve all OD pairs, equilibrium prices can not be asymmetric.

\begin{proposition}\label{prop:equilibrium}
\bt{Suppose that $\sigma\geq 1/2$ and Assumption~\ref{ass:sigmalambda} holds.} There exists no asymmetric equilibrium prices, in which both firms serve nonzero demand for all OD pairs with nonzero potential \bt{riders}.
\end{proposition}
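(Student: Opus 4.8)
The plan is to argue by contradiction: suppose there is an equilibrium with prices $(\ell_{ij}^1,\ell_{ij}^2)$ such that for some OD pair $(i,j)$ with $\theta_{ij}>0$ we have $\ell_{ij}^1\neq\ell_{ij}^2$, and both firms serve strictly positive demand for every OD pair with positive potential riders. The starting point is the first-order optimality condition for each firm's best-response problem \eqref{eq:flowoptimization} (with the competitor's price in place of $\infty$). Since both firms serve nonzero demand for pair $(i,j)$, the demand-satisfaction constraint \eqref{eq:staticconst1} is active for that pair (serving strictly more vehicles than needed only wastes operating cost), so the associated dual variable $\lambda_{ij}^f$ is well defined and equals the marginal routing/charging cost of an extra ride; this marginal cost is the \emph{same} expression for both firms because they have identical $\beta_t,\beta_c,p_i$ and face the same network, and it depends only on the amount of demand $\theta_{ij}D(\ell_{ij}^f,\ell_{ij}^{-f})$ that must be served, not on which firm serves it. Writing the stationarity condition in $\ell_{ij}^f$ (differentiating $\theta_{ij}\ell_{ij}^f D(\ell_{ij}^f,\ell_{ij}^{-f})-\lambda_{ij}^f\theta_{ij}D(\ell_{ij}^f,\ell_{ij}^{-f})$ and using $\partial\lambda_{ij}^f/\partial\ell_{ij}^f$ via the marginal-cost interpretation), I obtain for each firm an equation of the form
\begin{equation}
\label{eq:fobestresp}
D(\ell_{ij}^f,\ell_{ij}^{-f}) + \bigl(\ell_{ij}^f-g(\theta_{ij}D(\ell_{ij}^f,\ell_{ij}^{-f}))\bigr)\,\partial_1 D(\ell_{ij}^f,\ell_{ij}^{-f}) = 0,
\end{equation}
where $g(\cdot)$ is the common marginal-cost-of-a-ride function (monotone, the same for both firms). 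The geometry of Figure~\ref{fig:demandfunctions} gives explicit piecewise-polynomial formulas for $D$ and $\partial_1 D$; under Assumption~\ref{ass:sigmalambda} and $\sigma\geq 1/2$ the relevant prices stay in the linear regime $(1-\sigma)\lmax\le\ell\le\sigma\lmax$, exactly as in the proof of Proposition~\ref{prop:monomarginalprices}, so \eqref{eq:fobestresp} becomes a clean algebraic relation between $\ell_{ij}^f$ and $\ell_{ij}^{-f}$.

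Next I would write \eqref{eq:fobestresp} for $f=1$ and for $f=2$, obtaining a pair of best-response equations in the two unknowns $\ell_{ij}^1,\ell_{ij}^2$. Subtracting the two equations, every term that depends \emph{symmetrically} on the pair (in particular $g$ evaluated at the served demand, which is the same functional form for both firms, and the symmetric parts of $D$) cancels or combines, and I am left with a relation of the form $\ell_{ij}^1-\ell_{ij}^2 = h(\ell_{ij}^1,\ell_{ij}^2)$ where, after using the explicit linear-regime expressions for $D$ and its partials, $h$ can be shown to force $\ell_{ij}^1=\ell_{ij}^2$. Concretely, in the linear regime $D(\ell^f,\ell^{-f})$ is affine in $(\ell^f,\ell^{-f})$ with $\partial_1 D = -\tfrac{1}{2\sigma\lmax}$ and the cross term symmetric, so the difference of the two stationarity conditions reduces to $c\,(\ell_{ij}^1-\ell_{ij}^2)=0$ for an explicitly positive constant $c$ (here I expect $c$ to come out proportional to something like $3/(4\sigma\lmax)$, mirroring the coefficients in \eqref{eq:optimalprices}), which yields the contradiction. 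The same computation, read the other direction, is what already underlies the proof of Proposition~\ref{prop:unique}; here I am using it to rule out the asymmetric branch rather than to verify the symmetric one.

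Two auxiliary points need care. First, I must justify that the served demand is genuinely positive for \emph{both} firms at \emph{every} OD pair with $\theta_{ij}>0$, so that the demand-satisfaction constraint is active and the dual variable / marginal-cost identity is valid — this is exactly the hypothesis of the proposition, so it is granted, but I should note that positivity of $D(\ell^f,\ell^{-f})$ together with the geometry pins down which of the three regimes of $D$ is in force, and Assumptions imply it is the linear one. Second, I need the marginal-cost function $g$ to be identical across firms; this follows because the routing/charging subproblem \eqref{eq:staticconst2}--\eqref{eq:staticconst8} and its cost coefficients are firm-independent, so $\lambda_{ij}^f$ is a function of the required flow $\theta_{ij}D(\ell_{ij}^f,\ell_{ij}^{-f})$ alone (and the network data), and crucially does \emph{not} depend on the competitor's price except through that flow. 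The main obstacle, I expect, is handling the coupling through $g$ rigorously: $\lambda_{ij}^f$ is only implicitly defined as the sensitivity of the optimal transport cost to the demand at pair $(i,j)$, so I will need either a sensitivity/envelope argument (the optimal value of \eqref{eq:flowoptimization} as a function of the demand vector is convex and its subgradient in the $(i,j)$ component is $\lambda_{ij}$) or to reuse the explicit bound structure behind $\overline{\lambda}_{ij}$ from Proposition~\ref{prop:monomarginalprices}; once that dependence is pinned down as firm-symmetric, the cancellation in the subtracted first-order conditions is the routine part.
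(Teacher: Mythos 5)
Your overall strategy --- assume an asymmetric equilibrium, write the first-order stationarity condition for each firm's best-response problem, and derive a contradiction from the pair of conditions --- is the same as the paper's. But the step you describe as "the routine part" rests on a premise that is false, and this is where the proposal breaks down. You claim that under Assumption~\ref{ass:sigmalambda} and $\sigma\geq 1/2$ the prices stay in a regime where $D(\ell^f,\ell^{-f})$ is \emph{affine} in the two prices with $\partial_1 D=-\tfrac{1}{2\sigma\lmax}$, so that subtracting the two stationarity conditions yields $c\,(\ell_{ij}^1-\ell_{ij}^2)=0$. That linear form is the \emph{monopoly} demand $D(\ell,\infty)$ on $[(1-\sigma)\lmax,\sigma\lmax]$; the duopoly demand is never affine in the pair of prices on the relevant ranges. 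It is quadratic both when the prices lie below $(1-\sigma)\lmax$ and when they lie above (the paper's expressions \eqref{eq:demandduo} and \eqref{eq:demandduo2}), and $\partial_1 D$ depends on both prices. Moreover, the duopoly prices genuinely do drop below $(1-\sigma)\lmax$ (the lower price bound $\underline{\ell}^d$ in Proposition~\ref{cor:pricebounds} sits in that region), and the two firms' prices can straddle the breakpoint, so one must separately handle three cases: both prices below $(1-\sigma)\lmax$, both above, and one on each side. With quadratic demand the difference of the two stationarity conditions does not collapse to a single linear cancellation; the paper instead has to bound the quantity $|\lambda_1-\lambda_2|$ in terms of $\delta=\ell^1-\ell^2$ via sensitivity estimates on the FOC-implied price maps (its Lemmas~\ref{lem:dualbounds}--\ref{lem:dualbounds3}) and then push through case-specific sign arguments on polynomial expressions in $(\sigma,\lambda,\delta)$.

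A second, related gap is your treatment of the dual variables. You assert that $\lambda_{ij}^f$ is a common function $g$ of the single flow $\theta_{ij}D(\ell_{ij}^f,\ell_{ij}^{-f})$, so that the marginal-cost terms cancel symmetrically. But $\lambda_{ij}^f$ is the sensitivity of firm $f$'s optimal routing/charging cost to the demand at $(i,j)$ given the \emph{entire} demand vector that firm $f$ serves; in an asymmetric candidate equilibrium the two firms serve different demand vectors across the network, so there is no reason for $\lambda_{ij}^1$ and $\lambda_{ij}^2$ to agree, nor to be given by the same univariate function of the local flow. The paper does not assume this; it only uses the uniform upper bound $\overline{\lambda}_{ij}$ from Assumption~\ref{ass:sigmalambda} together with the bounds on $|\lambda_1-\lambda_2|$ that follow from the FOCs themselves. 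Without those lemmas, or some substitute controlling how far apart the two dual variables can drift when the prices differ by $\delta$, the subtraction of the two first-order conditions does not close the argument.
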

\bt{\textit{Proof outline: }We let $\ell_{ij}^1=\ell_{ij}^2+\delta$ for some $\delta>0$ and show by contradiction that the first-order optimality condition can not simultaneously be satisfied for both firms. Since the demand function $D(\lija,\lijb)$ has different expressions for $\lija\leq(1-\sigma)\lmax$ and $\lija>(1-\sigma)\lmax$, we separately study three cases: (i) $\lija,\lijb\leq (1-\sigma)\lmax$, (ii) $\lija,\lijb> (1-\sigma)\lmax$, and (iii) $\lija>(1-\sigma)\lmax$, $\lijb\leq (1-\sigma)\lmax$. For all cases, we first assume that the first-order optimality condition hold for both firms and bound the difference between the dual variables leading to $\lija$ and $\lijb$ in terms of $\delta$. For cases (i) and (ii), we show by using the bound on the dual variables that if the first-order condition for firm 2 is satisfied (i.e., is equal to 0), then the first-order condition for firm 1 is always less than 0, which is a contradiction. For case (iii), we show that with first-order condition satisfying prices, $\lijb+\delta$ is always less than $\lija$, which is a contradiction. \hfill$\square$}

The complete proof is provided in Appendix~\ref{app:equilibrium}. As we have identified that the duopoly can only be in a symmetric equilibrium, we analyze the effects of competition in state of a symmetric equilibrium.

The next set of results characterize the effects of competition on the ride prices, the operators' profits, the total societal ride demand served, and the consumer surplus. In the first result, we provide lower and upper bounds on the price reduction the customers will see with the introduction of the second firm and moving from a monopoly to a symmetric duopoly equilibrium.
\begin{proposition}
\label{prop:duomarginalprices}
\bt{Suppose that Assumptions~\ref{ass:sigma} and \ref{ass:sigmalambda} hold.} Let $\overline{\lambda}_{ij}$ be defined as in Proposition~\ref{prop:monomarginalprices}. Define
$$\Delta_1(\lambda_{ij})\eqdef4\lmax^2+(2\lambda_{ij}+(15\sigma-3)\ell_{\max})(2\lambda_{ij}+(1-\sigma)\ell_{\max}),$$
$$\Delta_2(\lambda_{ij})\eqdef2(\sigma\lmax-\lambda_{ij})^2+2(\lmax-\lambda_{ij})^2+11(\sigma-1)^2\lmax^2.$$
Let $\dijd$ be the optimal dual variable corresponding to the demand satisfaction constraint \eqref{eq:staticconst1duo}. The symmetric duopoly \bt{equilibrium} prices are determined as:
\begin{equation}
    \label{eq:duooptprices}
    \hspace{-.05cm}\ell_{ij}^d=\left\{
\begin{array}{ll}
      \frac{(3-5\sigma)\lmax+2\lambda_{ij}^d+\sqrt{\Delta_1(\dijd)}}{8}& ,\frac{\lambda_{ij}^d}{\lmax}\leq\frac{3(1-\sigma)^2}{2(\sigma+1)} \\[1ex]
      \frac{(5-3\sigma)\lmax+2\lambda_{ij}^d-\sqrt{\Delta_2(\dijd)}}{4}&,\textnormal{o.w.}, \\
\end{array} 
\right.
\end{equation}
Moreover, denote the difference between optimal monopoly and symmetric duopoly \bt{equilibrium} prices for OD pair $(i,j)$ as $\Delta \ell_{ij}\eqdef\ell_{ij}^m-\ell_{ij}^d$. Then:
\begin{equation}
    \label{eq:gainlowerbound}
  \Delta \ell_{ij}{\geq}\left\{
\begin{array}{ll}
       \frac{(7\sigma-1)\ell_{\max}-2\overline{
    \lambda}_{ij}-\sqrt{\Delta_1(\overline{\lambda}_{ij})}}{8}& ,\frac{\overline{\lambda}_{ij}}{\lmax}\leq\frac{3(1-\sigma)^2}{2(\sigma+1)} \\[1ex]
      \frac{(4\sigma-4)\lmax-2\overline{\lambda}_{ij}+\sqrt{\Delta_2(\overline{\lambda}_{ij})}}{4}&,\textnormal{o.w.}, \\
\end{array} 
\right.
\vspace{-.2cm}
\end{equation}
and
\begin{equation}
    \label{eq:gainupperbound}
    \Delta\ell_{ij}{\leq}\frac{(7\sigma{-}1)\ell_{\max}{+}4\overline{\lambda}_{ij}{-}\ell_{\max}\sqrt{{-}15\sigma^2{+}18\sigma{+}1}}{8}.
\end{equation}
\end{proposition}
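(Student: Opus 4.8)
The plan is to first derive the closed form \eqref{eq:duooptprices} from a stationarity condition, and then read off \eqref{eq:gainlowerbound}--\eqref{eq:gainupperbound} using monotonicity of the prices in the relevant dual variables together with the bounds $0\le\dijm\le\overline{\lambda}_{ij}$ and $0\le\dijd\le\overline{\lambda}_{ij}$. For the price formula, I would argue exactly as in the equilibrium characterization behind Proposition~\ref{prop:unique}: a symmetric equilibrium price $\lijd$ is a stationary point of \eqref{eq:duopolyflowoptimization} with respect to firm~1's price, so writing the Lagrangian with multiplier $\dijd\ge 0$ on \eqref{eq:staticconst1duo} (which, as in Proposition~\ref{prop:monomarginalprices}, equals the marginal cost of serving one unit of $(i,j)$-demand, since $\lijd$ influences routing only through that constraint) gives $D(\lija,\lijb)+(\lija-\dijd)\,\partial_1 D(\lija,\lijb)=0$ evaluated at $\lija=\lijb=\lijd$, where $\partial_1 D$ is the partial derivative in the first argument. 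From the geometry of Figure~\ref{fig:demandfunctions}, a customer buys from firm~1 iff $\sigma x+(1-\sigma)y>\lija$ and $y>\tfrac{\lmax}{2}+\tfrac{\lija-\lijb}{2(1-\sigma)}$, so at $\lija=\lijb$ the indifference line is $y=\lmax/2$ and both $D(\lijd,\lijd)$ and $\partial_1 D$ (a boundary term from the moving indifference line plus an area term from the moving affordability line) are elementary. Two regimes arise according to whether $\lijd\le(1-\sigma)\lmax$: in the first, $D(\ell,\ell)$ is $\tfrac12$ minus a triangular area, $D(\ell,\ell)=\tfrac12-\tfrac{(\ell-(1-\sigma)\lmax/2)^2}{2\sigma(1-\sigma)\lmax^2}$; in the second, that area becomes trapezoidal and $D(\ell,\ell)$ is affine in $\ell$. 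In each regime the stationarity condition reduces to a quadratic in $\lijd$; solving it, keeping the feasible root singled out by the second-order condition and by continuity at $\lijd=(1-\sigma)\lmax$, yields the two branches of \eqref{eq:duooptprices}, and substituting $\lijd=(1-\sigma)\lmax$ into either quadratic identifies the regime threshold as $\dijd/\lmax=3(1-\sigma)^2/(2(\sigma+1))$; Assumptions~\ref{ass:sigma}--\ref{ass:sigmalambda} keep $\lijd$ within these two regimes.

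Three facts then deliver the bounds. First, under Assumptions~\ref{ass:sigma}--\ref{ass:sigmalambda} the monopolist operates in the linear-demand regime, so by Proposition~\ref{prop:monomarginalprices} $\lijm=\tfrac{(1+\sigma)\lmax+2\dijm}{4}$, which is increasing in $\dijm$. Second, the map $\lambda\mapsto\lijd(\lambda)$ defined by \eqref{eq:duooptprices} is continuous (both branches equal $(1-\sigma)\lmax$ at the threshold) and strictly increasing on the range permitted by Assumption~\ref{ass:sigmalambda}: in the first branch $\tfrac{d}{d\lambda}\Delta_1(\lambda)=8\lambda+(28\sigma-4)\lmax>0$ for $\sigma\ge 3/5$, and in the second $\tfrac{d}{d\lambda}\Delta_2(\lambda)=8\lambda-4(1+\sigma)\lmax<0$. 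Third, $0\le\dijm\le\overline{\lambda}_{ij}$ and $0\le\dijd\le\overline{\lambda}_{ij}$, since (as in the proof of Proposition~\ref{prop:monomarginalprices}) $\overline{\lambda}_{ij}$ upper bounds the cost of serving one $(i,j)$-ride, e.g.\ by a dedicated round trip with charging at both endpoints.

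Now write $\Delta\ell_{ij}=\lijm-\lijd$. Since $\lijm$ increases in $\dijm$, $\lijd$ increases in $\dijd$, and both duals lie in $[0,\overline{\lambda}_{ij}]$, the upper bound follows by evaluating $\lijm$ at $\dijm=\overline{\lambda}_{ij}$ and $\lijd$ at $\dijd=0$ and simplifying with $\Delta_1(0)=(-15\sigma^2+18\sigma+1)\lmax^2$, giving \eqref{eq:gainupperbound}; the lower bound follows by evaluating $\lijm$ at $\dijm=0$ (namely $(1+\sigma)\lmax/4$) and $\lijd$ at $\dijd=\overline{\lambda}_{ij}$, using the branch of \eqref{eq:duooptprices} selected by whether $\overline{\lambda}_{ij}/\lmax\le 3(1-\sigma)^2/(2(\sigma+1))$, which produces the two cases of \eqref{eq:gainlowerbound}. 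I expect the main obstacle to be the bookkeeping in the first step --- computing $D$ and $\partial_1 D$ at the symmetric point in both regimes and checking, after the algebra, that the discriminants collapse exactly to $\Delta_1(\dijd)$ and $\Delta_2(\dijd)$ and that the threshold is the stated one --- whereas the only conceptual point to watch is that the stationarity condition must differentiate the demand term through firm~1's price only, in line with the best-response condition underlying Proposition~\ref{prop:unique}, rather than as a total derivative along $\lija=\lijb$.
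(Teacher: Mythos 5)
Your proposal is correct and follows essentially the same route as the paper: the closed form \eqref{eq:duooptprices} is obtained from the first- and second-order optimality conditions with the partial derivative taken in firm~1's price only (exactly as in the paper's proof of Proposition~\ref{prop:unique}), and the bounds \eqref{eq:gainlowerbound}--\eqref{eq:gainupperbound} follow by monotonicity of $\lijm$ in $\dijm$ and of $\lijd$ in $\dijd$ together with $0\le\dijm,\dijd\le\overline{\lambda}_{ij}$, evaluating at the endpoints just as the paper does. You also get the monotonicity direction right (both branches of \eqref{eq:duooptprices} are \emph{increasing} in $\dijd$ under the stated assumptions), which is the direction the paper's substitutions actually rely on, even though its appendix misstates it as ``decreasing.''
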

\bt{\textit{Proof outline: }\btt{We state} the first and the second order optimality conditions on \eqref{eq:duopolyflowoptimization} to get the duopoly equilibrium prices. To lower bound the price difference, we evaluate the monopoly prices at $\lambda_{ij}^m=0$ and the duopoly equilibrium prices at $\lambda_{ij}^d=\dbound$ (and to upper bound, vice versa).\hfill$\square$}

The complete proof can be found in Appendix~\ref{app:duomarginalprices}.
An interesting observation is how $\sigma$ affects the prices. For the optimal monopoly prices, $\partial \lijm/\partial \sigma>0$, i.e., the monopolist serving a population with higher $\sigma$ charges more for the rides with identical costs (i.e., identical $\dijm$). \bt{The reason is that  larger $\sigma$ shifts the distribution of customers' valuations for the monopolist from intermediate to extreme values (as $\sigma$ increases from $1/2$ to $1$, the distribution shifts from triangular to uniform). This shift in the distribution modifies the demand function $D(\lija,\infty)$, which leads to an increase on the optimal prices. Simply put, larger $\sigma$, i.e., lack of firm loyalty, leads to an increase in the prices for the monopoly.}
On the contrary for the duopoly \bt{equilibrium} prices, $\partial \lijd/\partial\sigma<0$. That is, the duopoly serving a population with higher $\sigma$ charges less for the rides with identical costs (i.e., identical $\dijd$). The intuition behind is that larger $\sigma$ indicates a lack of firm loyalty (when $\sigma=1$, the customers buy from the firm that offers lower prices).  Hence, higher $\sigma$ strengthens the competition and causes the firms to charge less. The reader can observe that when $\sigma=1$, $\lijd=\dijd$, i.e., the \bt{equilibrium} prices are equal to the costs of providing the rides to the platform, which is the lowest the firms can go without losing money but make no profit.

Observe that the lower bounds in \eqref{eq:gainlowerbound} are decreasing functions of $\overline{\lambda}_{ij}$. Given the maximum value of $\overline{\lambda}_{ij}$ equal to $\overline{\lambda}_{ij}=\frac{(3\sigma-1)(3-\sigma)}{4(5-3\sigma)}\ell_{\max}$, the lower bound on the price difference is $0$. Hence, we can conclude that the duopoly prices are never higher than the monopoly prices, for all OD pairs.

Proposition~\ref{prop:duomarginalprices} characterizes the effect of competition on the prices depending on the network parameters and therefore the dual variables. The next series of results aim to determine universal bounds on the ratio of prices, induced demand, profits and consumer surplus in the monopoly and the duopoly, {\it independent of the network parameters}.

\begin{proposition}[Price Bounds]\label{cor:pricebounds}
\bt{Suppose that Assumptions~\ref{ass:sigma} and \ref{ass:sigmalambda} hold.} For all OD pairs, the optimal monopoly prices obey the following:
\begin{equation}
    \label{eq:monopolypricesbound}
    {2\lmax}/{5}\leq\underline{\ell}^m\leq\ell_{ij}^m\leq\overline{\ell}^m\leq{3\lmax}/{4},
    \end{equation}
where $\underline{\ell}^m\eqdef\frac{1+\sigma}{4}\lmax$ and $\overline{\ell}^m\eqdef \frac{7+14\sigma-9\sigma^2}{40-24\sigma}\lmax$. Furthermore, the symmetric duopoly \bt{equilibrium} prices obey:
\begin{equation}
    \label{eq:duopolypricesbound}
    0\leq\underline{\ell}^d\leq\ell_{ij}^d\leq\overline{\ell}^d\leq{\lmax}/{2},
\end{equation}
where $\underline{\ell}^d\eqdef\frac{3-5\sigma+\sqrt{-15\sigma^2+18\sigma+1}}{8}\lmax$ and $\overline{\ell}^d\eqdef \frac{1+\sigma}{4}\lmax$.
Moreover for all OD pairs $(i,j)$, the ratio between the \bt{symmetric }duopoly {equilibrium prices} and the \bt{optimal} monopoly prices obey the following:
\begin{equation}
\label{eq:pricesratio}
    \frac{\underline{\ell}^d}{\overline{\ell}^m}\leq\frac{\ell_{ij}^d}{\ell_{ij}^m}\leq\frac{\overline{\ell}^d}{\underline{\ell}^m}=1.
\end{equation}
\end{proposition}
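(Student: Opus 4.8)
The plan is to read the bounds straight off the closed-form prices already derived — \eqref{eq:optimalprices} for $\lijm$ and \eqref{eq:duooptprices} for $\lijd$ — together with one structural fact: the dual variables appearing in these formulas are confined to a fixed interval. Exactly as in the discussion following Proposition~\ref{prop:monomarginalprices} (and, by the identical argument applied to \eqref{eq:duopolyflowoptimization}, which has the same constraint structure as \eqref{eq:flowoptimization}), in the worst case a ride between $(i,j)$ requires rebalancing and charging at both endpoints, so $0\le\dijm\le\overline{\lambda}_{ij}$ and $0\le\dijd\le\overline{\lambda}_{ij}$; Assumption~\ref{ass:sigmalambda} then gives $\overline{\lambda}_{ij}\le\dbound$. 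Since $\lijm$ is nondecreasing in $\dijm$ and $\lijd$ is nondecreasing in $\dijd$ (by inspection of \eqref{eq:optimalprices}, and of \eqref{eq:duooptprices} as already used in the proof of Proposition~\ref{prop:duomarginalprices}), each price lies between the value of its formula at dual variable $0$ and at dual variable $\dbound$. The remainder is just evaluating the two formulas at these endpoints and checking a few scalar inequalities in $\sigma\in[3/5,1]$.

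\textbf{Monopoly.} Under Assumption~\ref{ass:sigma} we have $(3-5\sigma)\lmax/2\le 0\le\dijm$, and since $\dbound\le\tfrac{3\sigma-1}{2}\lmax$ (equivalent to $\sigma\le 7/5$), also $\dijm\le\tfrac{3\sigma-1}{2}\lmax$; hence $\dijm$ always falls in the linear regime of \eqref{eq:optimalprices}, i.e.\ $\lijm=\tfrac14\bigl((1+\sigma)\lmax+2\dijm\bigr)$. At $\dijm=0$ this equals $\underline{\ell}^m=\tfrac{1+\sigma}{4}\lmax$; at $\dijm=\dbound$, collecting over the common denominator $8(5-3\sigma)$ gives $\overline{\ell}^m=\tfrac{7+14\sigma-9\sigma^2}{40-24\sigma}\lmax$. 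The flanking inequalities are elementary: $\underline{\ell}^m\ge\tfrac25\lmax$ because $\sigma\ge 3/5$, and $\overline{\ell}^m\le\tfrac34\lmax$ is equivalent to $36\sigma^2-128\sigma+92=4(\sigma-1)(9\sigma-23)\ge 0$, which holds on $[3/5,1]$ with equality at $\sigma=1$.

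\textbf{Duopoly.} At $\dijd=0$ the branch condition of \eqref{eq:duooptprices} holds since $0\le\tfrac{3(1-\sigma)^2}{2(\sigma+1)}$, and $\Delta_1(0)=(-15\sigma^2+18\sigma+1)\lmax^2$, so the first branch yields $\underline{\ell}^d=\tfrac{3-5\sigma+\sqrt{-15\sigma^2+18\sigma+1}}{8}\lmax$. At $\dijd=\dbound$ one first verifies $\dbound\ge\tfrac{3(1-\sigma)^2}{2(\sigma+1)}\lmax$ on $[3/5,1]$ — equivalently $15\sigma^3-59\sigma^2+85\sigma-33=(5\sigma-3)(3\sigma^2-10\sigma+11)\ge 0$, true because $3\sigma^2-10\sigma+11$ has negative discriminant — so the second branch applies (with equality only at $\sigma=3/5$, where the two branches agree by continuity). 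Rewriting the target $\lijd=\tfrac{1+\sigma}{4}\lmax$ as $\sqrt{\Delta_2(\dbound)}=(4-4\sigma)\lmax+2\dbound$, squaring, and substituting the explicit value of $\dbound$ makes both sides coincide, giving $\overline{\ell}^d=\tfrac{1+\sigma}{4}\lmax$. Lastly $\overline{\ell}^d\le\tfrac{\lmax}{2}\iff\sigma\le 1$, and $\underline{\ell}^d\ge 0$ follows from $5\sigma-3\le\sqrt{-15\sigma^2+18\sigma+1}$ on $[3/5,1]$ (equivalently $8(5\sigma-1)(\sigma-1)\le 0$).

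\textbf{Ratio.} Combining the two ranges, $\lijd\in[\underline{\ell}^d,\overline{\ell}^d]$ and $\lijm\in[\underline{\ell}^m,\overline{\ell}^m]$ with $\underline{\ell}^m>0$, so $\tfrac{\underline{\ell}^d}{\overline{\ell}^m}\le\tfrac{\lijd}{\lijm}\le\tfrac{\overline{\ell}^d}{\underline{\ell}^m}$; and since $\overline{\ell}^d=\underline{\ell}^m=\tfrac{1+\sigma}{4}\lmax$, the upper bound equals exactly $1$ (recovering, for every OD pair, that duopoly prices never exceed monopoly prices). I expect the only genuinely nonroutine step to be the evaluation at $\dijd=\dbound$: it requires identifying the correct branch of \eqref{eq:duooptprices} and then a square-root manipulation that closes precisely because of the particular algebraic form of the bound in Assumption~\ref{ass:sigmalambda}; the monopoly side and all remaining $\sigma$-inequalities are endpoint substitutions and factorings of quadratics.
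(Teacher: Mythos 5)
Your proposal is correct and follows essentially the same route as the paper's proof in Appendix~\ref{app:pricebounds}: bound the dual variables by $0\leq\lambda_{ij}\leq\overline{\lambda}_{ij}\leq\dbound$, evaluate the closed-form prices \eqref{eq:optimalprices} and \eqref{eq:duooptprices} at these endpoints using their monotonicity in the dual variable, and then impose $\sigma\in[3/5,1]$ for the uniform constants. You actually supply more detail than the paper does on the duopoly side (branch selection at $\dijd=\dbound$ and the identity $\sqrt{\Delta_2(\dbound)}=(4-4\sigma)\lmax+2\dbound$, which the paper leaves as ``identical''), and all of your factorizations check out.
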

\bt{\textit{Proof outline: }The proof is done by evaluating the optimal monopoly prices given by \eqref{eq:optimalprices} at $\lambda_{ij}^m=0$ and $\lambda_{ij}^m=\dbound$ as well as the duopoly equilibrium prices given by \eqref{eq:duooptprices} at $\lambda_{ij}^d=0$ and $\lambda_{ij}^d=\dbound$ to get the bounds on the prices in terms of $\sigma$. Then, we impose the condition $\sigma\in[3/5,1]$ to get the uniform bounds.}

The complete proof can be found in Appendix~\ref{app:pricebounds}.
An observation is that increasing $\sigma$ increases both the upper and the lower bounds for the optimal monopoly prices, whereas decreases the lower bound on the duopoly \bt{equilibrium} prices and increases the upper bound. This is because for the optimal monopoly prices, $\partial \lijm/\partial \sigma>0$. However, because it strengthens the competition between the firms in the duopoly, it can cause the prices to go much lower, hence decreasing the lower bound (when $\sigma=1$: if $\lambda_{ij}^d=0$, then $\lijd=0$). The upper bound on the duopoly \bt{equilibrium} prices still increases, because according to Assumption~\ref{ass:sigmalambda} a larger $\sigma$ permits a larger $\lambda_{ij}^d$ and hence higher prices. Consequently, the upper bound on the price ratio is always 1 independent of $\sigma$ while the lower bound is decreasing with $\sigma$.

The next result characterizes the effect of competition on the total customer demand for rides that are served by either firm. We show that the aggregate demand served by the duopoly is at least equal to and can be up to $4$ times higher than the demand served by the monopoly.

\begin{proposition}[Demand Bounds]
\label{cor:demandfunctionbounds}
\bt{Suppose that Assumptions~\ref{ass:sigma} and \ref{ass:sigmalambda} hold.} For all OD pairs $(i,j)$, the monopoly demand functions evaluated at the optimal monopoly prices obey:
\begin{equation}
    \label{eq:monopolydemandbounds}
    {1}/{4}\leq\underline{D}^m\leq  D(\ell_{ij}^m,\infty)\leq\overline{D}^m\leq{2}/{3},
\end{equation}
where $\underline{D}^m\eqdef \frac{13-3\sigma^2-6\sigma}{40\sigma-24\sigma^2}$ and $\overline{D}^m\eqdef\frac{1+\sigma}{4\sigma}$. The duopoly demand functions at the duopoly \bt{equilibrium} prices obey:
\begin{equation}
    \label{eq:duopolydemandbounds}
    {1}/{4}\leq\underline{D}^d\leq D(\ell_{ij}^d,\ell_{ij}^d)\leq\overline{D}^d\leq{1}/{2},
\end{equation}
where $\underline{D}^d\eqdef\frac{1}{4\sigma}$ and $\overline{D}^d\eqdef \frac{1}{2}-\frac{\left(-(1+\sigma)+\sqrt{-15\sigma^2+18\sigma+1}\right)^2}{128\sigma(1-\sigma)}$.
Furthermore, the ratio between the total demand served between any OD pair $\frac{2D(\lijd,\lijd)}{D(\lijm,\infty)}$ obeys the following:
\begin{equation}
    \label{eq:monoduodemandratio}
    1\leq\frac{2}{1+\sigma}=\frac{2\underline{D}^d}{\overline{D}^m}\leq\frac{2D(\lijd,\lijd)}{D(\lijm,\infty)}\leq\frac{2\overline{D}^d}{\underline{D}^m}\leq 4
\end{equation}
\end{proposition}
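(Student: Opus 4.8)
\textit{Proof proposal.} The plan is to reduce everything to three ingredients: (i) closed forms for the demand functions on the price ranges that actually occur, (ii) the price bounds of Proposition~\ref{cor:pricebounds}, and (iii) monotonicity of the demand in the ride price. The demand bounds then come from substituting the price bounds into the (monotone) demand expressions, and the ratio bounds come from dividing the resulting monopoly and duopoly bounds.

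First I would pin down the monopoly demand. As noted just before Assumption~\ref{ass:sigma}, under Assumptions~\ref{ass:sigma} and \ref{ass:sigmalambda} every optimal price $\lijm$ lies in $[(1-\sigma)\lmax,\sigma\lmax]$, the linear regime of $D(\cdot,\infty)$. Computing the area of the blue region of Figure~\ref{fig:demandfunctions} with $\lijb=\infty$ --- equivalently $\Pr[\sigma x+(1-\sigma)y>\lijm]$ with $x,y$ i.i.d.\ uniform on $[0,\lmax]$ --- gives $D(\lijm,\infty)=\big((1+\sigma)\lmax-2\lijm\big)/(2\sigma\lmax)$, which is strictly decreasing in $\lijm$. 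Substituting the monopoly price bounds $\underline{\ell}^m=\frac{1+\sigma}{4}\lmax$ and $\overline{\ell}^m=\frac{7+14\sigma-9\sigma^2}{40-24\sigma}\lmax$ reproduces exactly $\overline{D}^m=\frac{1+\sigma}{4\sigma}$ and $\underline{D}^m=\frac{13-3\sigma^2-6\sigma}{40\sigma-24\sigma^2}$, so monotonicity gives \eqref{eq:monopolydemandbounds}. The two numerical endpoints then reduce to $\overline{D}^m\le\tfrac23\Leftrightarrow 5\sigma\ge3$ and $\underline{D}^m\ge\tfrac14\Leftrightarrow(3\sigma-13)(\sigma-1)\ge0$, both valid on $[3/5,1]$.

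Next the duopoly. In a symmetric equilibrium $\lija=\lijb=\lijd$, a customer buys from firm $1$ iff $v_1>\lijd$ and $v_1>v_2$; since $v_1-v_2=(1-\sigma)(2y-\lmax)$ the latter is $y>\lmax/2$, so $D(\lijd,\lijd)=\Pr[\sigma x+(1-\sigma)y>\lijd,\ y>\lmax/2]$. I would compute this area over the three regimes, obtaining $D(\lijd,\lijd)=\tfrac12$ for $\lijd\le\tfrac{(1-\sigma)\lmax}{2}$, $D(\lijd,\lijd)=\tfrac12-\frac{(2\lijd/\lmax-(1-\sigma))^2}{8\sigma(1-\sigma)}$ for $\tfrac{(1-\sigma)\lmax}{2}<\lijd\le(1-\sigma)\lmax$, and $D(\lijd,\lijd)=\frac{3+\sigma-4\lijd/\lmax}{8\sigma}$ for $(1-\sigma)\lmax<\lijd\le\sigma\lmax$; one checks this is continuous and non-increasing on $[0,\sigma\lmax]$. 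By Proposition~\ref{cor:pricebounds}, $\lijd\in[\underline{\ell}^d,\overline{\ell}^d]\subseteq[0,\lmax/2]\subseteq[0,\sigma\lmax]$ (using $\sigma\ge1/2$); moreover $\underline{\ell}^d<(1-\sigma)\lmax$ (equivalently $\sqrt{-15\sigma^2+18\sigma+1}<5-3\sigma\Leftrightarrow24(\sigma-1)^2>0$), so $\underline{\ell}^d$ sits in the concave branch, while $\overline{\ell}^d=\frac{1+\sigma}{4}\lmax\ge(1-\sigma)\lmax$ for $\sigma\ge3/5$ sits in the linear branch. Substituting $\underline{\ell}^d$ into the concave branch reproduces $\overline{D}^d$, substituting $\overline{\ell}^d$ into the linear branch gives $\underline{D}^d=\frac{1}{4\sigma}$, and monotonicity gives \eqref{eq:duopolydemandbounds}; finally $\overline{D}^d\le\tfrac12$ because a firm's share cannot exceed $\Pr[y>\lmax/2]=\tfrac12$, and $\underline{D}^d=\frac{1}{4\sigma}\ge\tfrac14$ since $\sigma\le1$ (the $\sigma=1$ endpoint, where the concave regime degenerates, is handled as a limit, with $\underline{\ell}^d\to0$ and $D(0,0)=\tfrac12$).

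For the ratio, combining $D(\lijd,\lijd)\in[\underline{D}^d,\overline{D}^d]$ and $D(\lijm,\infty)\in[\underline{D}^m,\overline{D}^m]$ yields $\frac{2\underline{D}^d}{\overline{D}^m}\le\frac{2D(\lijd,\lijd)}{D(\lijm,\infty)}\le\frac{2\overline{D}^d}{\underline{D}^m}$; the left-hand quantity equals $\frac{2/(4\sigma)}{(1+\sigma)/(4\sigma)}=\frac{2}{1+\sigma}\ge1$, and the right-hand quantity is at most $\frac{2\cdot(1/2)}{1/4}=4$ by $\overline{D}^d\le\tfrac12$ and $\underline{D}^m\ge\tfrac14$, which is \eqref{eq:monoduodemandratio}. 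The main obstacle is the regime bookkeeping: verifying that $\lijm$ always falls in the linear regime of $D(\cdot,\infty)$ (inherited from the discussion preceding Assumption~\ref{ass:sigma}) and that the extreme duopoly prices $\underline{\ell}^d$ and $\overline{\ell}^d$ fall in the concave and linear regimes respectively, so that the correct branch of the piecewise demand function is evaluated, together with the area computations for the regions of Figure~\ref{fig:demandfunctions}; everything after that is elementary manipulation of inequalities in $\sigma$ on $[3/5,1]$.
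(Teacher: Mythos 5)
Your proposal is correct and follows essentially the same route as the paper: evaluate the piecewise demand expressions (linear branch for the monopoly, concave and linear branches for the symmetric duopoly) at the price bounds $\underline{\ell}^m,\overline{\ell}^m,\underline{\ell}^d,\overline{\ell}^d$ from Proposition~\ref{cor:pricebounds}, use monotonicity of demand in price, and divide to get the ratio bounds on $\sigma\in[3/5,1]$. The only difference is that you re-derive the demand formulas geometrically and make the regime bookkeeping and monotonicity explicit, whereas the paper simply cites \eqref{eq:demandfunctionsmonopoly}, \eqref{eq:demandduoequal}, and \eqref{eq:demandduo2} from earlier proofs.
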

\bt{\textit{Proof outline: }The proof is done by evaluating the demand functions for the monopoly and the duopoly at the price bounds given by \eqref{eq:monopolypricesbound} and \eqref{eq:duopolypricesbound}, and then imposing the condition $\sigma\in[3/5,1]$ to get uniform bounds.}

The complete proof can be found in Appendix~\ref{app:demandbounds}. Taking into account that induced demand is inversely proportional to prices, the impact of $\sigma$ on the demand function bounds is in accordance with price bounds in \bt{Proposition}~\ref{cor:pricebounds}.


\begin{remark}\label{remark:simultaneousbound}
The upper bound in \eqref{eq:monoduodemandratio} is achieved when $\sigma=1$, $\lambda_{ij}^m=\dbound$ and $\lambda_{ij}^d=0$. Although it is achievable for some OD pairs, it is not possible to achieve it for all OD pairs simultaneously. This is because for $\dijd$ to be 0, constraint \eqref{eq:staticconst1duo} has to be slack, meaning node $i$ has excess supply of vehicles that are being rebalanced to node $j$. This however can not hold simultaneously for all OD pairs, since that would mean there are empty vehicles being routed between all OD pairs, which would not be optimal.
\end{remark}

Interestingly, we see that this potential increase in the aggregate demand never translates into a profit increase for the firms because of the competition. As expected, profits decrease in the presence of competition. According to the next result, the profits generated by a single firm in duopoly is always less than $85\%$ of the profits generated by the monopoly.

\begin{proposition}[Profit Bounds]\label{cor:profitbounds}
\bt{Suppose that Assumptions~\ref{ass:sigma} and \ref{ass:sigmalambda} hold.} Let profits earned by serving the induced demand between OD pair $(i,j)$ in the monopoly be $P^m_{ij}$. With the optimal monopoly prices, $P^m_{ij}$ for all $(i,j)$ obey the following:
\begin{equation}
    \label{eq:monopolyprofitbounds}
   \theta_{ij} {\lmax}/{16}\leq\theta_{ij}\underline{P}^m\leq P^m_{ij}\leq\theta_{ij} \overline{P}^m\leq\theta_{ij}{\lmax}/{4},
\end{equation}
where
\begin{equation*}
    \begin{split}
        \underline{P}^m=\frac{(3\sigma^2+6\sigma-13)^2}{64\sigma(5-3\sigma)^2}\lmax,\quad \overline{P}^m=\frac{(1+\sigma)^2}{16\sigma}\lmax.
    \end{split}
\end{equation*}
Similarly, let profits earned by serving the induced demand between OD pair $(i,j)$ by a single firm in the duopoly be $P^d_{ij}$. With the duopoly \bt{equilibrium} prices, $P^d_{ij}$ for all $(i,j)$ obey:
\begin{equation}
    \label{eq:duopolyprofitbounds}
    0\leq \theta_{ij}\underline{P}^d\leq P^d_{ij}\leq \theta_{ij}\overline{P}^d\leq{(4+\sqrt{10})}\lmax\theta_{ij}/48
\end{equation}
where
\begin{equation*}
    \begin{split}
     \underline{P}^d&{=}\left(\overline{\ell}^d-\dbound\right){\times}\underline{D}^d{=}\frac{1-\sigma}{2\sigma(5-3\sigma)}\lmax,
    \end{split}
\end{equation*}
$$\overline{P}^d=\underline{\ell}^d\overline{D}^d.$$
Furthermore, for all OD pairs, the ratio  $\frac{P^d_{ij}}{P^m_{ij}}$ obeys:
\begin{equation}
    \label{eq:profitsratiomonoduo}
    \frac{8(1-\sigma)}{(\sigma+1)^2(5-3\sigma)}=\frac{\underline{P}^d}{\overline{P}^m}\leq\frac{P^d_{ij}}{P^m_{ij}}\leq\frac{\overline{P}^d}{\underline{P}^m}\lessapprox 0.85.
\end{equation}
\end{proposition}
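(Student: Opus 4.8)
The plan is to prove the Profit Bounds proposition by mirroring the two-step strategy already used for Propositions~\ref{cor:pricebounds} and \ref{cor:demandfunctionbounds}: first establish per-OD-pair bounds on $P^m_{ij}$ and $P^d_{ij}$ as functions of $\sigma$ by evaluating the relevant closed-form expressions at the extreme admissible values of the dual variables, then impose $\sigma\in[3/5,1]$ to turn the $\sigma$-dependent bounds into the stated universal constants. For the monopoly side, I would start from \eqref{eq:monoprofits}, which gives $P^m_{ij}=\theta_{ij}(\lmax(1+\sigma)-2\ell_{ij}^m)^2/(4\sigma\lmax)$. Since $P^m_{ij}$ is a decreasing function of $\ell_{ij}^m$ on the relevant range (as noted right after Proposition~\ref{prop:monoprofitcs}, because $\ell_{ij}^m\le 3\lmax/4 < \lmax(1+\sigma)/2$), the extremes of $P^m_{ij}$ are attained at the extremes of $\ell_{ij}^m$. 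Plugging in $\ell_{ij}^m=\underline{\ell}^m=\frac{1+\sigma}{4}\lmax$ (i.e.\ $\lambda^m_{ij}=0$) gives the upper bound $\overline{P}^m=\frac{(1+\sigma)^2}{16\sigma}\lmax$, and plugging in $\ell_{ij}^m=\overline{\ell}^m=\frac{7+14\sigma-9\sigma^2}{40-24\sigma}\lmax$ (i.e.\ $\lambda^m_{ij}=\dbound$) gives the lower bound $\underline{P}^m$ after simplifying $\lmax(1+\sigma)-2\overline{\ell}^m = \frac{3\sigma^2+6\sigma-13}{\text{(something)}}\ldots$, which collapses to the stated $\underline{P}^m=\frac{(3\sigma^2+6\sigma-13)^2}{64\sigma(5-3\sigma)^2}\lmax$. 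Finally, checking monotonicity in $\sigma$ over $[3/5,1]$ yields the numeric endpoints $\lmax/16$ and $\lmax/4$.

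For the duopoly side, the profit per firm is $P^d_{ij}=\theta_{ij}(\ell_{ij}^d-\lambda_{ij}^d)D(\ell_{ij}^d,\ell_{ij}^d)$, where the markup $\ell_{ij}^d-\lambda_{ij}^d$ and the served fraction $D(\ell_{ij}^d,\ell_{ij}^d)$ both depend on $\lambda_{ij}^d$. The lower bound $\underline{P}^d$ is handled cleanly: I would use $\ell_{ij}^d\le\overline{\ell}^d$ and the fact that the markup is minimized and the demand $D$ is at its smallest value $\underline{D}^d=\frac{1}{4\sigma}$ when $\lambda_{ij}^d$ is at its maximum $\dbound$ — but one must be careful, since $\ell_{ij}^d-\lambda_{ij}^d$ is not obviously monotone; the cleanest route is to lower-bound $P^d_{ij}\ge(\overline{\ell}^d-\dbound)\underline{D}^d$ by arguing the markup is at least $\overline{\ell}^d-\dbound$ when costs are low and $D$ is at least $\underline{D}^d$ always, or more carefully to verify that $(\ell^d-\lambda^d)D(\ell^d,\ell^d)$ as a function of $\lambda^d$ is decreasing, so the minimum is at $\lambda^d=\dbound$. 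The upper bound $\overline{P}^d=\underline{\ell}^d\overline{D}^d$ similarly uses $\ell_{ij}^d-\lambda_{ij}^d\le\ell_{ij}^d$ (markup never exceeds the price) but then needs the nontrivial fact that the product is maximized at $\lambda^d=0$; there $\ell^d=\underline{\ell}^d$ and $D$ takes value $\overline{D}^d$. Then substituting $\sigma\in[3/5,1]$ and checking monotonicity gives $\underline{P}^d\ge 0$ (at $\sigma=1$) and $\overline{P}^d\le(4+\sqrt{10})\lmax/48$ (at $\sigma=3/5$, where $\underline{\ell}^d=\frac{3-5\sigma+\sqrt{-15\sigma^2+18\sigma+1}}{8}\lmax$ and $\overline{D}^d$ simplify to the stated value).

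The ratio bound \eqref{eq:profitsratiomonoduo} is then obtained by combining the per-pair bounds: $\frac{P^d_{ij}}{P^m_{ij}}\ge\frac{\theta_{ij}\underline{P}^d}{\theta_{ij}\overline{P}^m}=\frac{8(1-\sigma)}{(\sigma+1)^2(5-3\sigma)}$ and $\frac{P^d_{ij}}{P^m_{ij}}\le\frac{\overline{P}^d}{\underline{P}^m}$, after cancelling the common $\theta_{ij}$. The final numerical claim $\frac{\overline{P}^d}{\underline{P}^m}\lessapprox0.85$ requires maximizing the explicit function $\sigma\mapsto\overline{P}^d(\sigma)/\underline{P}^m(\sigma)$ over $[3/5,1]$; I would do this by taking the derivative (or just evaluating at enough points / showing monotonicity) to confirm the supremum is below $0.85$. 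The main obstacle will be the duopoly monotonicity arguments — verifying that $(\ell^d_{ij}-\lambda^d_{ij})D(\ell^d_{ij},\ell^d_{ij})$ is monotone in $\lambda^d_{ij}$ across the two branches of \eqref{eq:duooptprices} (and that the branch-switch point $\frac{3(1-\sigma)^2}{2(\sigma+1)}$ does not hide a larger interior value), together with the somewhat messy algebra of substituting the radical-laden expressions for $\underline{\ell}^d$ and $\overline{D}^d$ — plus the final scalar optimization establishing the $0.85$ constant, which is a calculus exercise but needs care to confirm it is a genuine bound over the whole range $\sigma\in[3/5,1]$ rather than just at the endpoints.
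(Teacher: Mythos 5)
Your proposal follows essentially the same route as the paper's proof: evaluate the monopoly profit formula \eqref{eq:monoprofits} at the price extremes $\underline{\ell}^m$ and $\overline{\ell}^m$, establish that the duopoly per-pair profit $\theta_{ij}(\ell_{ij}^d-\lambda_{ij}^d)D(\ell_{ij}^d,\ell_{ij}^d)$ is decreasing in $\ell_{ij}^d$ (the paper does this exactly as you anticipate, via $\tfrac{d\lambda_{ij}^d}{d\ell_{ij}^d}\ge 1$, $\ell_{ij}^d\ge\lambda_{ij}^d$, and $\tfrac{dD}{d\ell_{ij}^d}\le 0$), evaluate at $\lambda_{ij}^d=0$ and $\lambda_{ij}^d=\dbound$, and then optimize over $\sigma\in[3/5,1]$. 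The only detail you omit is the paper's footnoted duality caveat: in the region $\ell_{ij}^d\le(1-\sigma)\lmax$ the demand function is concave, so the identification of the profit with the dual objective is justified only via weak duality, which still suffices for the upper bound.
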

\bt{\textit{Proof outline: }The proof is done by evaluating the profits for the monopoly given by \eqref{eq:monoprofits} at the price bounds given by \eqref{eq:monopolypricesbound}. For the duopoly, we first \btt{derive} the dual objective, show that it decreases with $\lijd$, and evaluate at the duopoly equilibrium price bounds given by \eqref{eq:duopolypricesbound}. Then, we impose the condition $\sigma\in[3/5,1]$ to get the uniform bounds.}

The complete proof can be found in Appendix~\ref{app:profitbounds}. Since lower prices generate more profits in the monopoly and the price bounds are increasing with $\sigma$, the profit bounds of the monopoly are decreasing with $\sigma$. Similarly, the duopoly profit bounds are decreasing with $\sigma$ too. Since $\sigma$ increases the upper bound on prices, the lower bound on the profits decrease. However, although $\sigma$ decreases the lower bound on the prices, the upper bound on the profits still decrease. This is because competition in the duopoly is a downward driving force on the prices. Consequently, lower prices in the duopoly do not only result from lower $\dijd$, but also stronger competition. Hence, although lower prices increase the aggregate demand, because the firms are now competing over the customers, neither of the firms serve enough customers to compensate for the decrease in the prices. Hence, the profits decrease.


The upper bound in \eqref{eq:profitsratiomonoduo} is achieved when $\sigma=3/5$, $\dijm=\dbound$, and $\dijd=0$. Due to the same argument in Remark~\ref{remark:simultaneousbound}, it can not be achieved simultaneously by all the OD pairs. Consequently, the ratio of total profits can not achieve this upper bound with equality.

How do the customers benefit from the introduction of competition? We saw that a reduction in ride prices is expected. Next, we show that the consumer surplus in the duopolistic setting is at least equal to and can be up to $16$ times the consumer surplus in the monopoly.
\begin{proposition}[Consumer Surplus Bounds]
\label{cor:surplusbounds}
\bt{Suppose that Assumptions~\ref{ass:sigma} and \ref{ass:sigmalambda} hold.} Let the consumer surplus of customers requesting a ride between OD pair $(i,j)$ in the monopoly be $\textnormal{CS}^m_{ij}$. With the optimal monopoly prices, $\textnormal{CS}^m_{ij}$ for all $(i,j)$ obey:
\begin{equation}
    \label{eq:monocsbounds}
    \theta_{ij}\frac{\lmax}{32}\leq\theta_{ij}\underline{\textnormal{CS}}^m\leq\textnormal{CS}^m_{ij}\leq\theta_{ij}\overline{\textnormal{CS}}^m\leq\theta_{ij}\frac{13}{90}\lmax,
\end{equation}
where
$$
\underline{\textnormal{CS}}^m=\frac{171\sigma^4-660\sigma^3+1378\sigma^2-1748\sigma+907}{384\sigma(5-3\sigma)^2}\lmax,
$$
$$
\overline{\textnormal{CS}}^m=({7\sigma^2-2\sigma+7})\lmax/({96\sigma}).
$$
Similarly, let the consumer surplus of customers requesting a ride between OD pair $(i,j)$ in the duopoly be $\textnormal{CS}^d_{ij}$. With the duopoly \bt{equilibrium} prices, $\textnormal{CS}^d_{ij}$ for all $(i,j)$ obey:
\begin{equation}
    \label{eq:duocsbound}
    \theta_{ij}\frac{\lmax}{8}\leq\theta_{ij}\underline{\textnormal{CS}}^d\leq\textnormal{CS}^d_{ij}\leq\theta_{ij}\overline{\textnormal{CS}}^d\leq\theta_{ij}\frac{\lmax}{2},
\end{equation}
where
$$
\underline{\textnormal{CS}}^d=({\sigma^2-2\sigma+13})\lmax/({96\sigma}),
$$
\begin{equation*}
    \begin{split}
    \overline{\textnormal{CS}}^d=\frac{\lmax}{24\sigma(1-\sigma)}\Big(&(2\sigma)^3-(\sigma+1-2\frac{\underline{\ell}^d}{\lmax})^3\\
    &
    -24\sigma(1-\frac{\underline{\ell}^d}{\lmax})(\sigma-1+\frac{\underline{\ell}^d}{\lmax})\Big).
    \end{split}
\end{equation*}

Furthermore, for all OD pairs, the ratio $\frac{\textnormal{CS}^d_{ij}}{\textnormal{CS}^m_{ij}}$ obeys:
\begin{equation}
    \label{eq:csratiobounds}
    1\leq\frac{\sigma^2-2\sigma+13}{7\sigma^2-2\sigma+7}\leq\frac{\underline{\textnormal{CS}}^d}{\overline{\textnormal{CS}}^m}\leq\frac{\textnormal{CS}^d_{ij}}{\textnormal{CS}^m_{ij}}\leq\frac{\overline{\textnormal{CS}}^d}{\underline{\textnormal{CS}}^m}\leq16.
\end{equation}
\end{proposition}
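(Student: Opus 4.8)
The plan is to follow the template of Propositions~\ref{cor:pricebounds}--\ref{cor:profitbounds}: express the per--OD--pair consumer surplus as a monotone function of the corresponding ride price, substitute the price bounds of Proposition~\ref{cor:pricebounds}, and extremize the resulting $\sigma$--dependent expressions over $\sigma\in[3/5,1]$.

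First I would obtain a closed form for $\textnormal{CS}^d_{ij}$ as a function of $\ell^d_{ij}$. Under a symmetric price pair $(\ell^d_{ij},\ell^d_{ij})$ a customer buys from firm $f$ iff $v_f>\ell^d_{ij}$ and $v_f>v_{-f}$, so through \eqref{eq:valuations}--\eqref{eq:valuations2} firm $1$'s demand region in the $(x,y)$ plane is $\{\,y\ge\lmax/2,\ \sigma x+(1-\sigma)y\ge\ell^d_{ij}\,\}$, and firm $2$'s region is its reflection under $y\mapsto\lmax-y$. Hence $\textnormal{CS}^d_{ij}/\theta_{ij}$ equals twice the normalized integral of the payoff $\sigma x+(1-\sigma)y-\ell^d_{ij}$ over firm $1$'s region. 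Evaluating this area integral produces one formula on the ``linear'' range $\ell^d_{ij}\in[(1-\sigma)\lmax,\sigma\lmax]$ and another on the ``concave'' range $\ell^d_{ij}\in[0,(1-\sigma)\lmax)$, with the two agreeing at $\ell^d_{ij}=(1-\sigma)\lmax$; the concave branch is the one carrying the $\tfrac{1}{24\sigma(1-\sigma)}$ prefactor and the cubic terms appearing in $\overline{\textnormal{CS}}^d$ (at $\sigma=1$ this is read as a limit). For the monopoly, \eqref{eq:monocs} of Proposition~\ref{prop:monoprofitcs} already supplies the required expression, and it is in force because under Assumptions~\ref{ass:sigma}--\ref{ass:sigmalambda} the optimal monopoly prices lie in the linear range.

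Next I would verify monotonicity. Differentiating \eqref{eq:monocs} gives $\partial\textnormal{CS}^m_{ij}/\partial\ell^m_{ij}=(6\ell^m_{ij}/\lmax-3(1+\sigma))/(6\sigma)<0$ on $[\underline\ell^m,\overline\ell^m]\subseteq[2\lmax/5,3\lmax/4]$ since $3\lmax/4<(1+\sigma)\lmax/2$ for $\sigma\ge3/5$; a similar sign check shows both branches of $\textnormal{CS}^d_{ij}$ are decreasing in $\ell^d_{ij}$, hence (being continuous at the breakpoint) $\textnormal{CS}^d_{ij}$ is decreasing on $[\underline\ell^d,\overline\ell^d]$. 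Therefore $\textnormal{CS}^m_{ij}$ is maximized at $\ell^m_{ij}=\underline\ell^m=\tfrac{1+\sigma}{4}\lmax$ and minimized at $\ell^m_{ij}=\overline\ell^m$, which on substitution into \eqref{eq:monocs} yields $\overline{\textnormal{CS}}^m$ and $\underline{\textnormal{CS}}^m$; likewise $\textnormal{CS}^d_{ij}$ is maximized at $\ell^d_{ij}=\underline\ell^d$ (which lies in the concave range when $\sigma<1$), giving $\overline{\textnormal{CS}}^d$, and minimized at $\ell^d_{ij}=\overline\ell^d=\tfrac{1+\sigma}{4}\lmax\ge(1-\sigma)\lmax$, giving $\underline{\textnormal{CS}}^d$ from the linear branch. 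Bounds \eqref{eq:monocsbounds} and \eqref{eq:duocsbound} then follow by optimizing these four functions of $\sigma$ over $[3/5,1]$: one checks $\underline{\textnormal{CS}}^m$ and $\underline{\textnormal{CS}}^d$ are decreasing in $\sigma$ with minima $\lmax/32$ and $\lmax/8$ at $\sigma=1$, and $\overline{\textnormal{CS}}^m$, $\overline{\textnormal{CS}}^d$ have maxima $\tfrac{13}{90}\lmax$ (at $\sigma=3/5$) and $\lmax/2$ (the $\sigma\to1$ limit). Finally, for \eqref{eq:csratiobounds}, since for each OD pair $\textnormal{CS}^d_{ij}\in[\underline{\textnormal{CS}}^d,\overline{\textnormal{CS}}^d]$ and $\textnormal{CS}^m_{ij}\in[\underline{\textnormal{CS}}^m,\overline{\textnormal{CS}}^m]$ with $\lambda^m_{ij},\lambda^d_{ij}$ unconstrained relative to one another, the ratio lies in $[\underline{\textnormal{CS}}^d/\overline{\textnormal{CS}}^m,\ \overline{\textnormal{CS}}^d/\underline{\textnormal{CS}}^m]$; the left endpoint simplifies to $\tfrac{\sigma^2-2\sigma+13}{7\sigma^2-2\sigma+7}$, which is $\ge1$ because $(\sigma^2-2\sigma+13)-(7\sigma^2-2\sigma+7)=6(1-\sigma^2)\ge0$, and the right endpoint is $\le(\lmax/2)/(\lmax/32)=16$.

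The main obstacle is the first step: correctly setting up and integrating the two-dimensional payoff over firm $f$'s demand region while tracking how the region's geometry changes as $\ell^d_{ij}$ crosses $(1-\sigma)\lmax$, and checking that the concave--branch formula collapses to the stated $\overline{\textnormal{CS}}^d$ when $\ell^d_{ij}=\underline\ell^d$ (including the delicate $\sigma\to1$ limit, where $\underline\ell^d\to0$ and the $1/(1-\sigma)$ factor must be cancelled against a vanishing numerator). The remaining work --- the derivative sign checks and the single--variable optimizations over $\sigma\in[3/5,1]$ --- is routine calculus on rational and radical functions, of the same nature as in the proofs of Propositions~\ref{cor:pricebounds}--\ref{cor:profitbounds}.
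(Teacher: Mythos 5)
Your proposal is correct and follows essentially the same route as the paper's proof: use the closed-form monopoly expression \eqref{eq:monocs} and the direct payoff integral over each firm's symmetric demand region (doubled by symmetry) for the duopoly, check that consumer surplus is decreasing in the ride price on the relevant branches, substitute the price bounds of Proposition~\ref{cor:pricebounds}, and then extremize the resulting functions of $\sigma$ over $[3/5,1]$, with the ratio bounds obtained by pairing the extreme values. The only cosmetic difference is that you make explicit the monotonicity check on the concave branch and the $\sigma\to1$ limit of $\overline{\textnormal{CS}}^d$, which the paper treats more briefly.
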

\bt{\textit{Proof outline: }The proof is done by evaluating the consumer surplus for the monopoly given by \eqref{eq:monocs} at the price bounds given by \eqref{eq:monopolypricesbound}. For the duopoly, we compute the consumer surplus at the price bounds given by \eqref{eq:duopolypricesbound} in a similar fashion to the the proof of Proposition~\ref{prop:monoprofitcs}. Then, we impose the condition $\sigma\in[3/5,1]$ to get the uniform bounds.}

The complete proof can be found in Appendix~\ref{app:csbounds}. Considering the fact that lower prices (both in the duopoly and the monopoly) increase the consumer surplus by inducing more customers and increasing the surplus per customer, the dependency of the price bounds on $\sigma$ reflects to the consumer surplus bounds.


Remark~\ref{remark:simultaneousbound} applies for the upper bound in \eqref{eq:csratiobounds} too, and thus it can not be achieved for all OD pairs simultaneously. Therefore, the ratio of total consumer surplus cannot achieve this upper bound with equality.

So far, we have studied the effects of competition in an electric AMoD system by adopting a static network-flow formulation. Although very convenient for analysis, this formulation does not reflect the randomness in arrivals nor constrains vehicles dispatch decisions to be integer valued (e.g., 0.25 customer may be served). To address these discrepancies with the real environment, in the next section, we modify our model to account for the randomness in arrivals and furthermore design a control policy that can be implemented in real-time.

\section{Real-Time Control}\label{sec:realtime}
To accommodate for the stochastic nature of the arrivals, we model the arrival of the potential customers OD pair $(i,j)$ as a Poisson process with an arrival rate of $\theta_{ij}$. Moreover, we allow the firms to set prices real-time and use the same price-responsive demand model. In particular, during period $t$, for a price tuple $(\ell_{ijt}^1,\ell_{ijt}^2)$ for OD pair $(i, j)$, the induced arrival rate for firm $f$ is given by $\Theta_{ijt}^f=\theta_{ij}D(\ell_{ijt}^f,\ell_{ijt}^{-f})$. Thus, the number of new ride requests in time period $t$ for firm $f$ is $A^f_{ijt}\sim\textnormal{Pois}(\Theta_{ijt}^f)$ for OD pair $(i,j)$. As a consequence of this randomness in the customer arrivals, the platform operator might not be able to assign every customer to a ride immediately (if the number of induced arrivals exceed the number of available vehicles). In order to address this nuance, we adopt the following ride-sharing model:

\noindent
\textbf{Ride \bt{Hailing} Model:} Customers that purchase a ride during period $t$ are not immediately matched with a ride, but enter the queue for OD pair $(i,j)$ to be served at the beginning of period $t+1$. After the platform operator executes routing decisions for the fleet at the beginning of period $t+1$, the customers in the queue for OD pair $(i,j)$ are matched with rides and served on a first-come, first-served basis. 

Under these additional modeling modifications, our goal is to establish a real-time pricing and fleet management policy that can be implemented in a real environment \bt{and provides stability of the queues\footnote{\bt{The stability condition that we are interested in is rate stability of all queues. A queue for OD pair $(i,j)$ is rate stable if $\underset{t\rightarrow \infty}{\lim}q_{ij}(t)/t=0$.}}}. In fact, the model studied in Section~\ref{sec:static} is the static planning problem associated with this real environment, where we ignored the stochasticity of the arrivals and used the expected values, while allowing the vehicle routing decisions to be flows (real numbers) rather than integers. For the monopoly (or the symmetric duopoly), the solution to this static planning problem in \eqref{eq:flowoptimization} (or \eqref{eq:duopolyflowoptimization}) is the optimal static policy that consists of optimal prices as well as optimal vehicle routing and charging decisions. This policy can not directly be implemented in a real environment because it does not yield integer valued solutions. In an earlier work \cite{turan2019dynamic}, it was proven that randomizing the vehicle decisions according to the optimal solution of the static problem to get integer-valued actions guarantees stability of the queues. However, considering random arrivals, this method may not execute the most profitable actions since it does not take the real-time queue lengths into consideration. Although it guarantees stability of the queues, it does not seek to minimize the queue lengths and hence the wait time of the passengers, which would negatively affect the business. 

Instead of using the randomized solution to implement real-time actions, it is possible to realize a real-time policy that acknowledges the queue lengths and hence aims to maximize the profits while minimizing the total wait time of the customers. To achieve this, we propose to apply finite-horizon model predictive control (MPC) in our numerical experiment (albeit with no performance guarantee).

\noindent\bt{\textbf{MPC Procedure: }The idea of finite-horizon MPC is to observe the current state of the environment and determine the best control strategy for a planning horizon of $T$ by predicting the state path of the environment. Then, only the control strategy at the initial time period is implemented and the process is repeated. Specifically, let $\cal S$ be the state of the vehicles (locations, energy levels) and $\{Q_{ij}\}_{i,j\in \bt{\cal N}}$ be the outstanding customer demand (i.e., people who have requested a ride but not yet served) at the beginning of planning time. The MPC Algorithm is summarized as follows:}
\begin{algorithm}[h]
	\caption{MPC Procedure}\label{alg:mpc}
	\begin{algorithmic}[1]
        \STATE $\cal{S}\leftarrow$ Get vehicle states (locations, energy levels)
        \STATE $Q_{ij}\leftarrow$ Count outstanding customers
        \STATE $\{x_{ijt}^e,x_{ict}^e,\ell_{ijt}\}_{\forall i,j,e,t}\leftarrow$ Solve \eqref{eq:mpcdymono}
        \STATE Execute $\{x_{ij0}^e,x_{ic0}^e, \ell_{ij0}\}_{\forall i,j,e}$
	\end{algorithmic} 
\end{algorithm}

    \bt{At each period, Algorithm~\ref{alg:mpc} is run and the system state is observed. Using this information, the optimal fleet management and pricing strategy is computed for the next $T$ periods by solving \eqref{eq:mpcdymono}. Vehicle routing/charging and pricing decisions are executed for the initial time period and the environment transitions into next state. Then, Algorithm~\ref{alg:mpc} is re-run and this process is repeated during the entire operation of the system.}
    
\bt{Next, we state the optimization problem \eqref{eq:mpcdymono} for the controller using a dynamic pricing scheme in monopoly. Let the decision variable $\ell_{ijt}^1$ be the price for rides between OD pair $(i,j)$ in period $t$, $x_{ijt}^e$ be the number of vehicles at node $i$ with energy level $e$ being routed to node $j$ in period $t$, $x_{ict}^e$ be the number of vehicles charging at node $i$ starting with energy level $e$ in period $t$, and $q_{ijt}$ be the people waiting in the queue for OD pair $(i,j)$ in period $t$. We state the problem as follows: }
\bt{\begin{subequations}
\label{eq:mpcdymono}
\begin{align}
&\underset{x_{ict}^e,x_{ijt}^e,q_{ijt},\ell_{ijt}^1}{\text{max}}
& &\hspace{-.4cm}\nonumber\sum_{ijt}\ell_{ijt}^1\theta_{ij}D(\ell_{ijt}^1,\infty)-\sum_{ijt}w_{ijt}q_{ijt}\\
& & &\hspace{-.4cm}\label{eq:mpcdymonoobj}-\beta_t\sum_{ijet}\tau_{ij}x_{ijt}^e-\sum_{iet}(\beta_c+p_i)x_{ic}^t\\
& \text{s.t.}
& &\hspace{-1.75cm}\label{eq:mpcdymonoinitialqueue}  q_{ijt_0}\geq Q_{ij}-\sum_{e}x_{ijt_0}^e, \quad \forall i,j\in \bt{\cal N}\\
& & &\hspace{-1.75cm}\nonumber q_{ijt}\geq q_{ijt-1}+\theta_{ij}D(\ell_{ijt-1}^1,\infty)-\sum_{e}x_{ijt}^e,\\
\label{eq:mpcdymonoqueueconstraint}& & &\hspace{1cm} \forall i,j\in \bt{\cal N},~ \forall t>t_0,
\\
& & &\nonumber \hspace{-1.75cm}\sum_{j}x_{ijt}^e+x_{ict}^e-\sum_{j}x_{jit-\tau_{ji}}^{e+e_{ji}}-x_{ict-1}^{e-1}=s_{it}^e,\\
& & &\hspace{1cm}
\label{eq:mpcstaticbalanceconstraint}\forall i\in {\cal N},~\forall e\in {\cal E},~ \forall t \geq t_0\\
& & &\hspace{-1.75cm}x_{ict}^{e_{\max}}=0,\quad\forall i\in\bt{\cal N},~ \forall t\geq t_0,\\
& & &\hspace{-1.75cm}x_{ijt}^e=0,\quad \forall e<e_{ij},\forall i,j\in\bt{\cal N},\forall t\geq t_0,\\
\label{eq:mpcstaticnonnegativity}& & &\hspace{-1.75cm}\nonumber x_{ijt}^e, x_{ict}^e,q_{ijt}\geq 0, ~x_{ijt}^e,x_{ict}^e \in {\mathbb N},\\
& & &\hspace{1cm}\forall i,j\in \bt{\cal N},~ \forall e \in{\cal E},~ \forall t\geq t_0\\
\label{eq:mpcstaticzeroconst}& & &\hspace{-1.75cm} x_{ijt}^e=x_{ict}^e=0, \forall e\notin{\cal E}, \forall t<t_0, \forall i,j\in \bt{\cal N}.
\end{align}
\end{subequations}
}
\bt{The first term in the objective function \eqref{eq:mpcdymonoobj} corresponds to the expected revenue gained by setting prices $\ell_{ijt}^1$. The second term assigns a cost to the queue lengths, where $w_{ijt}$ is the cost per person in the queue for OD pair $(i,j)$ at the time period $t$. The third term is the operational costs of the trip-making vehicles, and the last term is the operational and the charging costs of the charging vehicles. Hence, the objective is to maximize the profits minus the queue penalty.}

\bt{The state variable $s_{it}^e$ denotes the number of vehicles at node $i$ with energy level $e$, at the beginning of time period $t$. At the beginning of the planning time $t=t_0$, $s_{it_0}^e$ is simply the number of available vehicles at node $i$ with energy level $e$. For $t>t_0$, $s_{it}^e$ denotes the number of vehicles that will be available at the beginning of time period t, at node $i$ with energy level $e$. These are the vehicles that are en route to another node at the time of planning. Hence, \eqref{eq:mpcstaticbalanceconstraint} is the vehicle balance constraint. The constraints \eqref{eq:mpcdymonoqueueconstraint} along with the non-negativity constraint \eqref{eq:mpcstaticnonnegativity}, implement the queue length transition $q_{ijt}=\max\{0, q_{ijt-1}+\theta_{ij}D(\ell_{ijt-1}^1,\infty)-\sum_{e}x_{ijt}^e\}$ as two linear inequalities. For $t=t_0$, the queue length is modified via \eqref{eq:mpcdymonoinitialqueue}, where $Q_{ij}$ denotes the number of passengers waiting to be served at the planning time.}

\bt{The MPC controller using a dynamic pricing scheme for the duopoly can be stated in a similar way to the monopoly. Due to space limitations, we exclude it here and refer the reader to the Appendix~\ref{app:mpc}.}

\bt{We end this section by noting that it is possible to implement a model predictive controller with static prices in monopoly simply by adding the constraint $\ell_{ijt}^1=\ell_{ij}^m,\forall t\geq t_0$ to \eqref{eq:mpcdymono}. For the duopoly, we replace $D(\ell_{ijt}^1,\infty)$ with $D(\ell_{ijt}^1,\ell_{ijt}^2)$ and add the constraint $\ell_{ijt}^1=\ell_{ijt}^2=\ell_{ij}^d,\forall t\geq t_0$.}

\section{Numerical Study}\label{sec:numerical}

In this section, we discuss the effects of competition and the performances of the real-time controllers via numerical examples. To solve the optimization problems we used the Gurobi Optimizer \cite{gurobi}. 

In our discrete-time system, we chose one period to be equal to $\Delta t=5$ minutes, which is equal to the time it takes to deliver one unit of battery energy. We chose operational costs of $\beta_t=\$0.2$ and $\beta_c=\$0.1$ (by taking the amortized average price of an electric car  over 5 years \cite{avgevprice} as a reference), maximum willingness to pay  $\ell_{\max}=\$50$, and $\sigma=3/5$. 
We chose a battery capacity of $24$kWh, and discretized the battery energy into $e_{\max}=6$ units, where one unit of battery energy is $4$kWh. Price of electricity per unit of energy (4kWh) ranges from $\$0.32$ to $\$1.2$\cite{electricitycost}, and we randomly sampled $p_i$ for all locations uniformly from this range. 

For the network and demand data, we divided Manhattan into 20 regions as in Figure \ref{fig:manhattanregions}. Using the yellow taxi data from the New York City Taxi and Limousine Commission
dataset \cite{manhattantaxidata} for May 09, 2019, Thursday between 15.00-17.00, we extracted the average arrival rates for rides,\begin{wrapfigure}{r}{0.16\textwidth}
\centering
    \includegraphics[width=.16\textwidth]{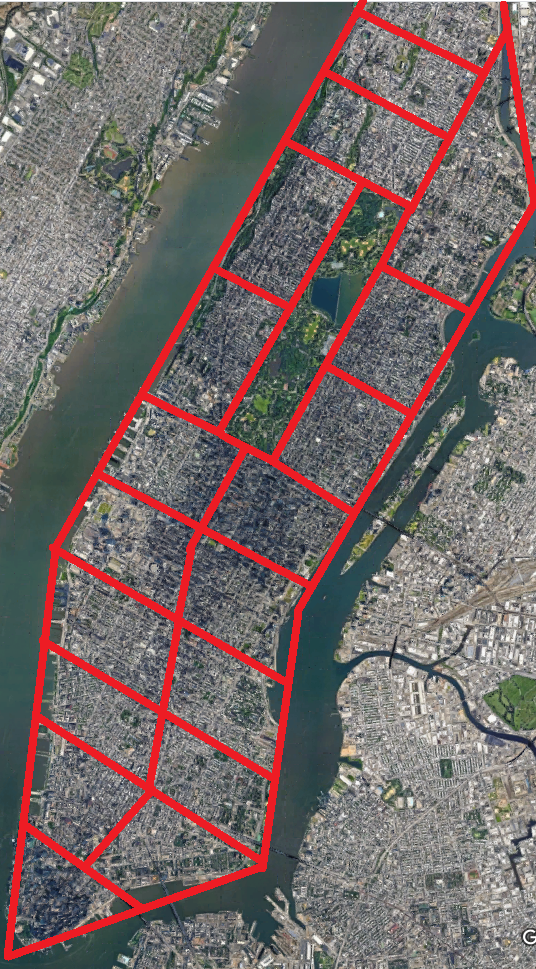}
    \vspace{-.6cm}
  \caption{Manhattan divided into $\bt{n}=20$ regions.}
  \label{fig:manhattanregions}
  \vspace{-.1cm}
\end{wrapfigure} average trip durations, and average distances between the regions (we excluded the rides occurring in the same region).
\bt{Note that the demand data used is not the data of potential riders, but the data of realized rides. Although it is not ideal to impose a demand function on the data of realized rides, this is the best data we could use due to lack of available data on potential riders. This is a common approach in the literature of pricing schemes in ride-sharing platforms \cite{bimpikis2019spatial,wollenstein2020joint}, as the realized rides leaving a location can be seen as a reasonable proxy for the potential riders at that location.}


\vspace{-.1cm}
\subsection{Effects of Competition Under Static Setting}
In this study, we analyze the effects of competition using prices for rides, induced demand, profits, and consumer surplus as metrics. To get the values of the aforementioned metrics in the monopoly, we solved \eqref{eq:flowoptimization}. For the duopoly, \bt{we can not solve \eqref{eq:duopolyflowoptimization} since the problem is non-convex. Therefore, we implemented best-response dynamics to see empirically whether this process would converge to an equilibrium of the duopoly so that we could numerically compare the monopoly and the duopoly.
Although we do not have a theoretical guarantee for convergence of best response dynamics, we know that only symmetric equilibria exist according to Proposition~\ref{prop:equilibrium}. Fortunately, our experiment converged to a symmetric equilibrium in a couple of iterations as demonstrated in Figure~\ref{fig:duopolybestresponse}.}

\begin{figure}[t]
    \centering
    \includegraphics[width=.5\textwidth]{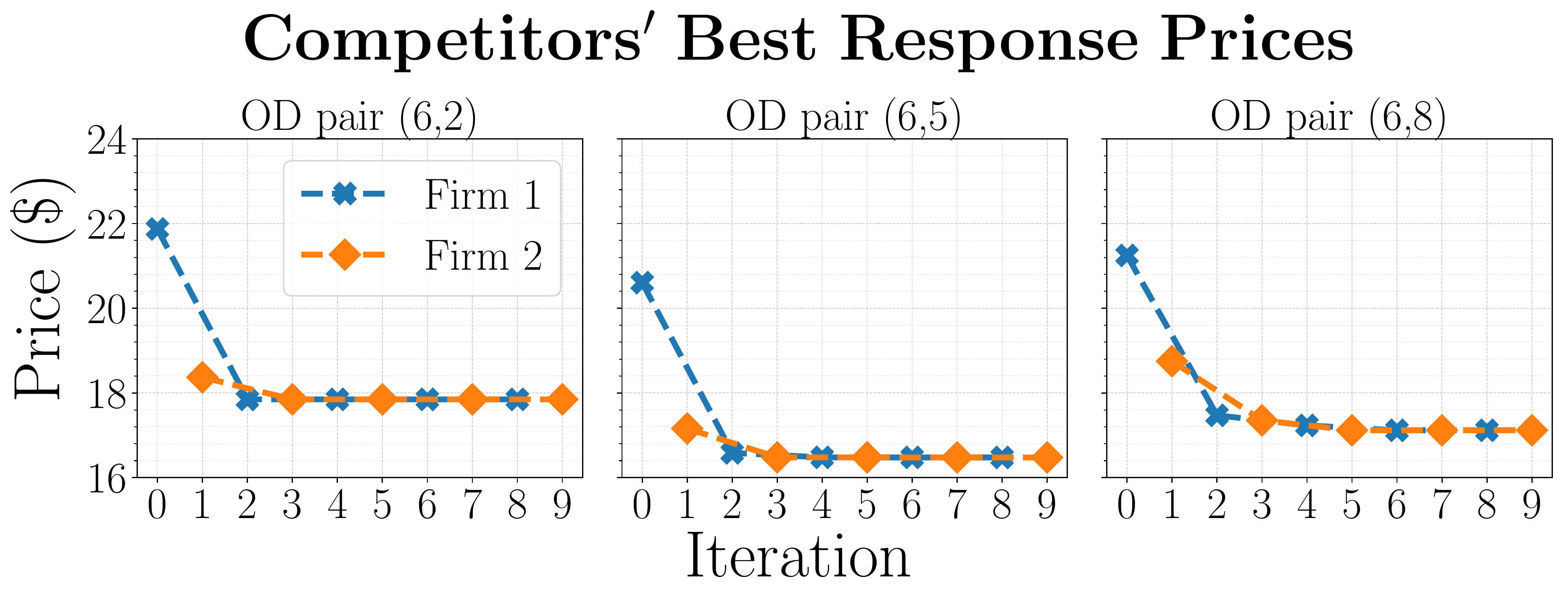}
    \caption{Best response prices for some rides originating from node $6$.}
    \label{fig:duopolybestresponse}
    \vspace{-.3cm}
\end{figure}


In Table~\ref{tab:ratios}, we display the ratios of performance metrics in the monopoly and the symmetric duopoly equilibrium. Moreover, we compute the theoretic upper and lower bounds derived in Section~\ref{sec:static} for $\sigma=3/5$ for comparison. To summarize the table, competition results in a $20\%$ decrease in the average prices of rides, a $44\%$ increase in the total induced demand, a $43\%$ decrease in the profits of a single firm, and a $100\%$ increase in the consumer surplus. 

\begin{table}[h]
\centering
 \begin{tabular}{|c|| c c c |} 
 \hline
  Metrics & Empirical & Theoretic LB & Theoretic UB \\ \hline
  $\ell_{\textnormal{avg}}^d/\ell_{\textnormal{avg}}^m$ &0.80 &0.67 &1 \\ \hline
  $D^d/D^m$ &1.44 &1.25 & 2.26\\ \hline
  $P^d/P^m$ &0.57 &0.39 &0.85 \\ \hline
  $\textnormal{CS}^d/\textnormal{CS}^m$ &2.00 &1.46 &5.89 \\ \hline
 \end{tabular}
\caption{Ratios of average prices, induced demand, profits, and consumer surplus in the monopoly and the symmetric duopoly equilibrium for $\sigma=3/5$.}
\label{tab:ratios}
\end{table}

\noindent
\textit{Impact of $\sigma$}:
The correlation over customers' preferences is measured by $\sigma$, and the effects of competition depend on the value of $\sigma$. To study how $\sigma$ influences the effects of competition, we present the ratios of performance metrics in the monopoly and the symmetric duopoly equilibrium for $\sigma=0.8$ and $\sigma=1$ in Table~\ref{tab:effectofsigma}.

\begin{table}[h]
\centering
 \begin{tabular}{|C{1.cm}||C{.8cm}|C{.8cm}|C{.8cm}|C{.8cm}|C{.8cm}|C{.8cm}|} 
 \hline
\multirow{2}{*}{Metrics} &\multicolumn{2}{c|}{Empirical}  & \multicolumn{2}{c|}{Theoretic LB} & \multicolumn{2}{c|}{Theoretic UB}\\ \cline{2-7}
&$\sigma=0.8$&$\sigma=1$ &$\sigma=0.8$&$\sigma=1$&$\sigma=0.8$&$\sigma=1$\\ \hline
$\ell_{\textnormal{avg}}^d/\ell_{\textnormal{avg}}^m$&0.42 &0.11 &0.29 &0 &1 &1\\\hline
$D^d/D^m$& 1.73&2.04 &1.11 &1 &2.55 &4\\\hline
 $P^d/P^m$&0.32 &0 &0.19 &0 &0.74 &0\\\hline
 $\textnormal{CS}^d/\textnormal{CS}^m$&2.95 &4.18 &1.22 &1 &9.22 &16\\\hline
 \end{tabular}
\caption{Ratios of average prices, induced demand, profits, and consumer surplus in the monopoly and the symmetric duopoly equilibrium for $\sigma=0.8$ and $\sigma=1$.}
\label{tab:effectofsigma}
\vspace{0cm}
\end{table}

The results in Tables~\ref{tab:ratios} and \ref{tab:effectofsigma} indicate that the higher the $\sigma$, the stronger the competition between the firms.  A larger $\sigma$ indicates higher correlation over customers' preferences, which means that the customers care less about the identity of the firm and more about lower prices when buying a ride ($\sigma=1$ means they buy from the firm that offers the lower price). Hence, a stronger competition requires the firms to drop their prices further, which in turn decreases their profits more. This is in favor of the customers, since lower prices induce more demand while generating higher consumer surplus.

\begin{figure*}[t!]
    \centering
    \begin{subfigure}[t]{0.45\textwidth}
        \centering
        \includegraphics[width=\textwidth]{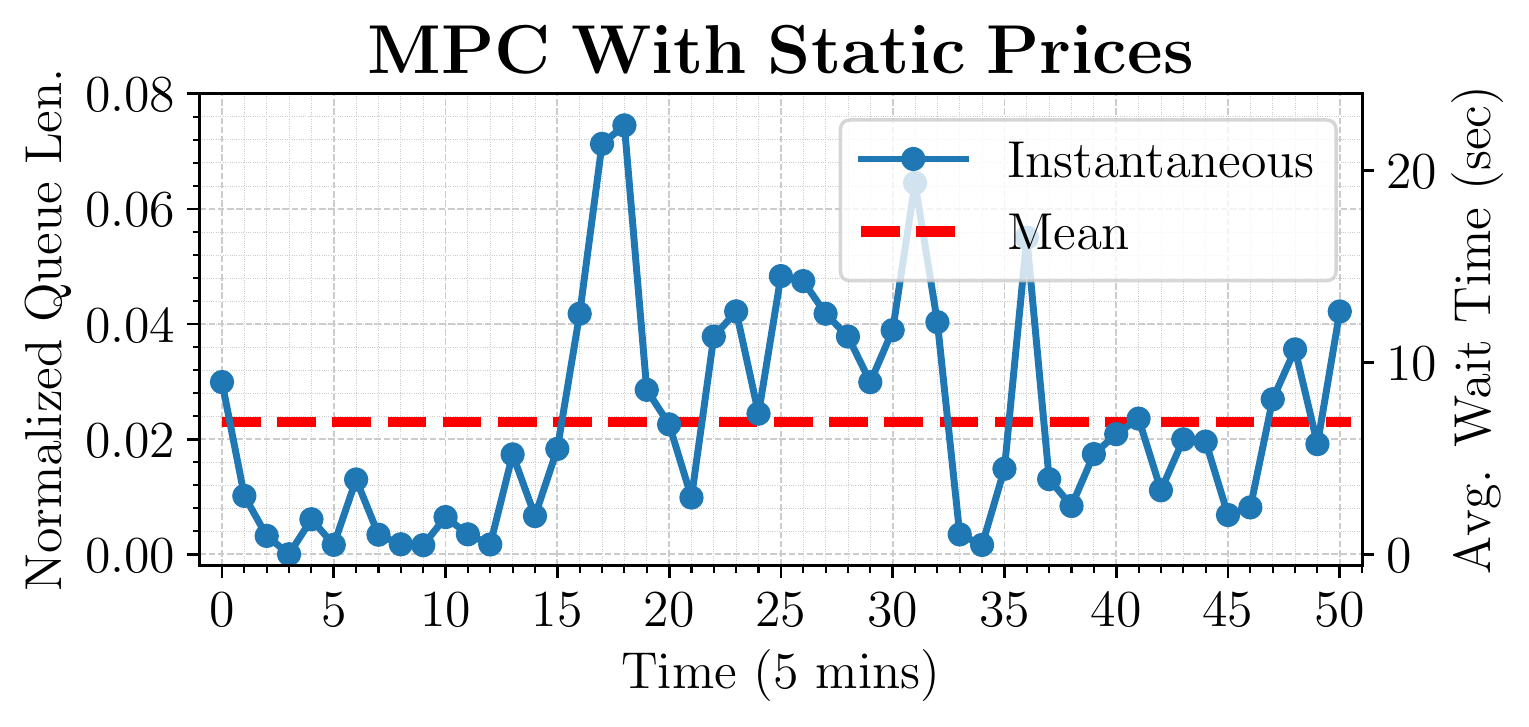}
        \vspace{-.6cm}
        \label{fig:mpcstatic}
    \end{subfigure}%
    \hfill
    \begin{subfigure}[t]{0.45\textwidth}
        \centering
        \hspace*{-.55cm}\includegraphics[width=\textwidth]{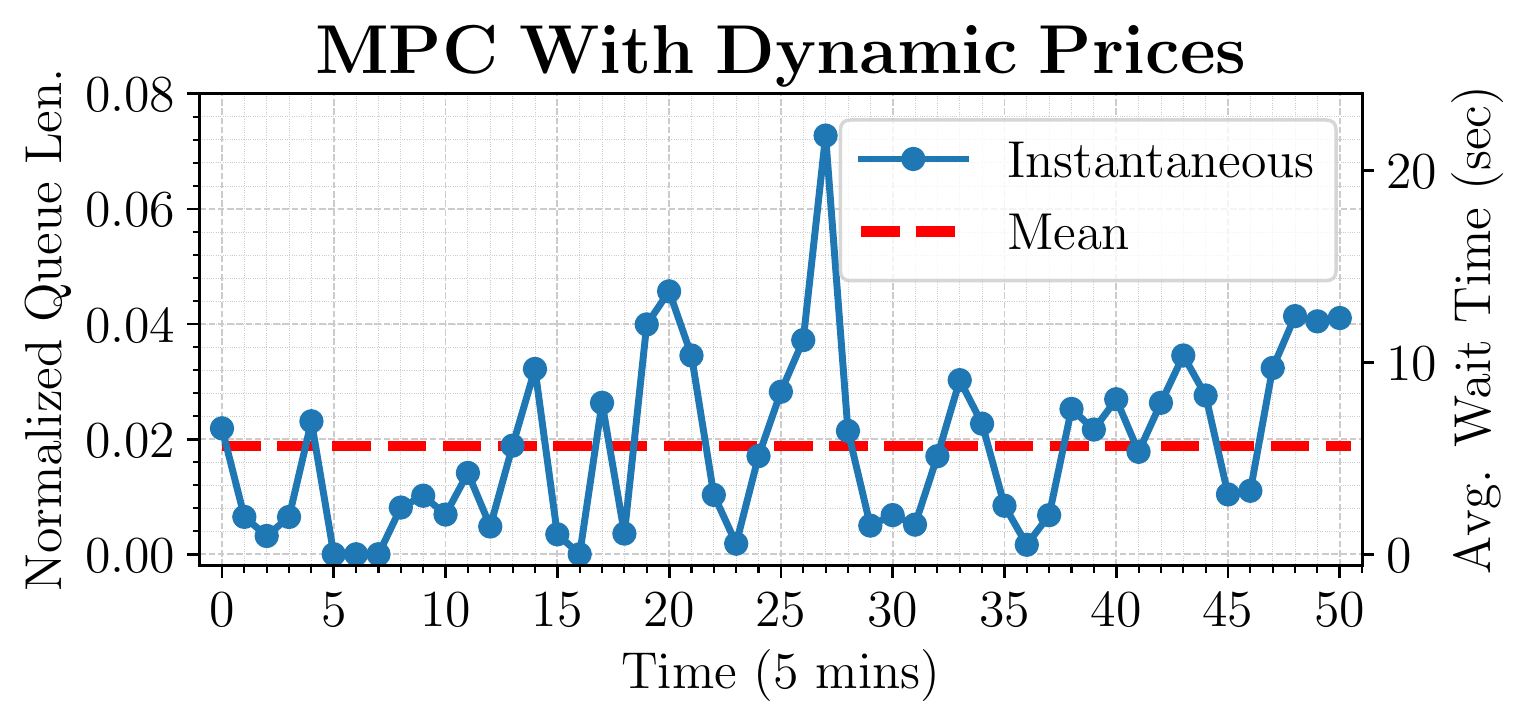}
        \vspace{-.6cm}
        \label{fig:mpcdynamic}
    \end{subfigure}
    \vspace{-.3cm}
    \caption{MPC results. We plot the normalized queue length for the MPC with static prices (left)/MPC with dynamic prices (right).}
    \label{fig:mpcresults}
\end{figure*}

\vspace{-.1cm}
\subsection{Real-Time Control}

In this study, we demonstrate the performances of the model predictive controllers utilizing static and dynamic pricing schemes using profits (minus the queue penalty) and the average wait time of the customers as metrics. To quantify the queue penalty, we set queue penalty per person to be $w_{ijt}=\$4$
(by doubling the average hourly wage of $\$24$ in the U.S.\cite{avgwages}).

\noindent We computed the instantaneous profits in one period as:
\begin{equation}
    \textnormal{Profits}=\textnormal{Revenue}-(\textnormal{Operational + Charging Costs}),
\end{equation}
the queue penalty in one period as:
\begin{equation}
    \textnormal{Queue Penalty}=w\times\textnormal{Outstanding Customers},
\end{equation}
and used the objective value of \eqref{eq:flowoptimization} as an upper bound on the average profits for comparison. We define
\begin{equation}
    \textnormal{Normalized Queue Length}\eqdef\frac{\textnormal{Outstanding Customers}}{\textnormal{Induced Demand}}
\end{equation}
and compute the instantaneous average wait time of customers in one period as:
\begin{equation}
    \textnormal{Avg. Wait Time}=\textnormal{Normalized Queue Length}\times\Delta t.
\end{equation}

We implemented the MPC with $T=10\times\Delta t$ as the planning horizon, and ran the environment for $50\times \Delta t$.

\subsubsection{Monopoly} 

We plot the instantaneous average wait time for MPC with static prices (MPC-SP) and dynamic prices (MPC-DP)
in Figure~\ref{fig:mpcresults}, and summarize the results in Table~\ref{tab:mpcresults}.

\begin{figure}[t]
    \centering
        \vspace{-.3cm}
    \includegraphics[width=.45\textwidth]{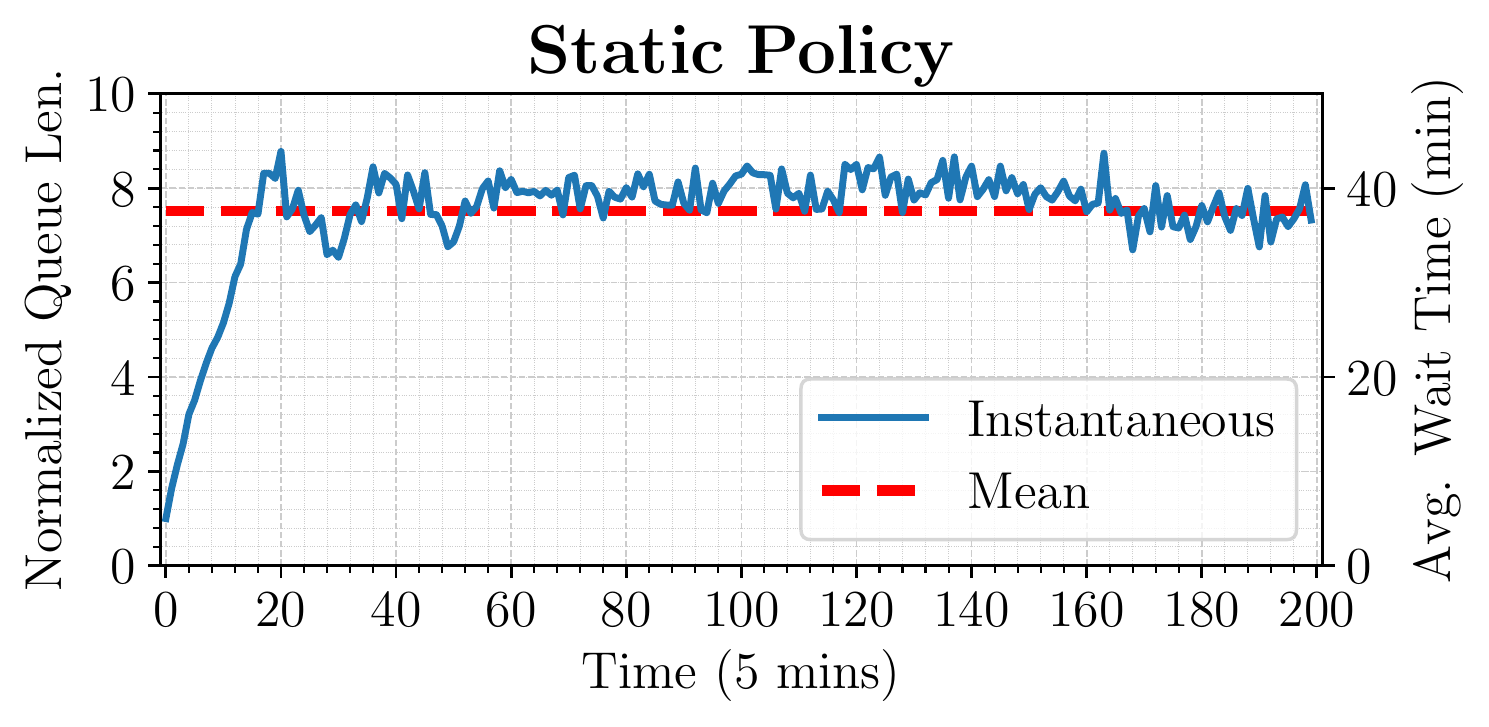}
    \vspace{-.3cm}
    \caption{Monopoly Static Policy Queues}
    \label{fig:monopolystaticqueues}
\end{figure}

\begin{table}[h]
\centering
 \begin{tabular}{|c|| c c |c|} 
 \hline
  Metrics &  MPC-SP & MPC-DP & \% Impr. \\ [0.5ex] 
 \hline\hline
 Mean Profits-Queue Penalty (\$) & 11700.86 & 11778.13 & \multirow{2}{*}{0.66\%}\\ \cline{1-3}
  \% of static &  98.36\% & 99.02\% & \\\hline\hline
 Mean Avg. Wait Time (sec) & 6.91 & 5.64 &  18.38\%\\
 \hline
 Var. Avg. Wait Time (sec) & 32.58 & 20.95 & 35.7\%\\\hline 
 
 \end{tabular}
\caption{MPC results in the monopoly. Mean and variance are computed over time. The static objective value is $11894.9$.}
\label{tab:mpcresults}
\end{table}

We observe that both controllers are able to keep the queue lengths very short (around $2\%$ of the induced demand), and still generate substantial amount of profits that is close to the static objective. In particular, MPC-SP generates $98.36\%$ and MPC-DP generates $99.02\%$ of the static profits, including the queue penalty. Although the marginal benefits of using dynamic pricing might seem low, a $0.66\%$ increase in average profits would make a considerable difference in the long run (e.g., from Table~\ref{tab:mpcresults}, a $\bt{\$}77$ increase in profits per period adds up to more than an increase of $\bt{\$}900$ per hour). Moreover, we observe that the mean of average wait time for MPC-SP is 6.91 seconds, while that of MPC-DP is 5.64 seconds which is an improvement of $18.38\%$. Lastly, a dynamic pricing scheme reduces the variance of the average wait time by $35.7\%$, which indicates a more robust system with predictable wait times.

We furthermore generated integer actions by randomizing according to the flows of the static solution and implemented the static policy in the real environment to compare its performance. In Figure~\ref{fig:monopolystaticqueues} we plot the average wait time using the static policy. Although it provides stability of the queues, it results in bad wait times with a mean of 36.9 minutes, which is more than 300 times longer than both MPC-SP and MPC-DP.

\subsubsection{Duopoly}
We computed the mean value of the metrics over both firms to get the performances of the controllers. The results are summarized in Table~\ref{tab:mpcresultsduopoly}. 

\begin{table}[h]
\centering
 \begin{tabular}{|c|| c c |c|} 
 \hline
  Metrics &  MPC-SP & MPC-DP & \% Impr. \\ [0.5ex] 
 \hline\hline
 Mean Profits-Queue Penalty (\$) & 6670.89 & 6729.2 & \multirow{2}{*}{0.87\%}\\ \cline{1-3}
  \% of static &  98.56\% & 99.42\% & \\\hline\hline
 Mean Avg. Wait Time (sec) & 7.27 & 5.01 & 31.08\%\\
 \hline
 Var. Avg. Wait Time (sec) & 44.68 & 17.92 & 59.89\%\\\hline 
 
 \end{tabular}
\caption{MPC results in the duopoly. Mean and variance are computed over time. The static objective value is $6768.2$.}
\label{tab:mpcresultsduopoly}
\end{table}

Similar to the monopoly, both controllers are able to keep the queues short while generating profits close to the static objective, with dynamic pricing scheme increasing the efficiency. 

 

\section{Conclusion}\label{sec:conclusion}
In this paper, we studied the impacts of competition on electric AMoD systems by comparing the monopoly and the duopoly in equilibrium. By formalizing the optimal strategies of profit-maximizing platform operators, we show that the identical competitors can only be in a symmetric equilibrium. In state of a symmetric duopoly equilibrium, the prices for rides and the profits of the firms are always less than those in the monopolistic setting, whereas the aggregate demand served and the consumer surplus are always higher. The closed-form universal bounds we provide quantify the amount of increase/reduction on the said metrics. These bounds depend heavily on correlation between customers' preferences and therefore the strength of the competition. The numerical studies using network and demand data of Manhattan indicate that stronger competition boosts the amount of increase/reduction on the metrics. Lastly, we experimentally demonstrate that it is possible to implement a real-time control policy for fleet management using model predictive control, and show that a real-time pricing policy further improves the performance.


\bibliographystyle{IEEEtran}
\bibliography{references}

\vspace{-1.2cm}
\begin{IEEEbiography}[{\includegraphics[width=1in,height=1.21in,clip,keepaspectratio]{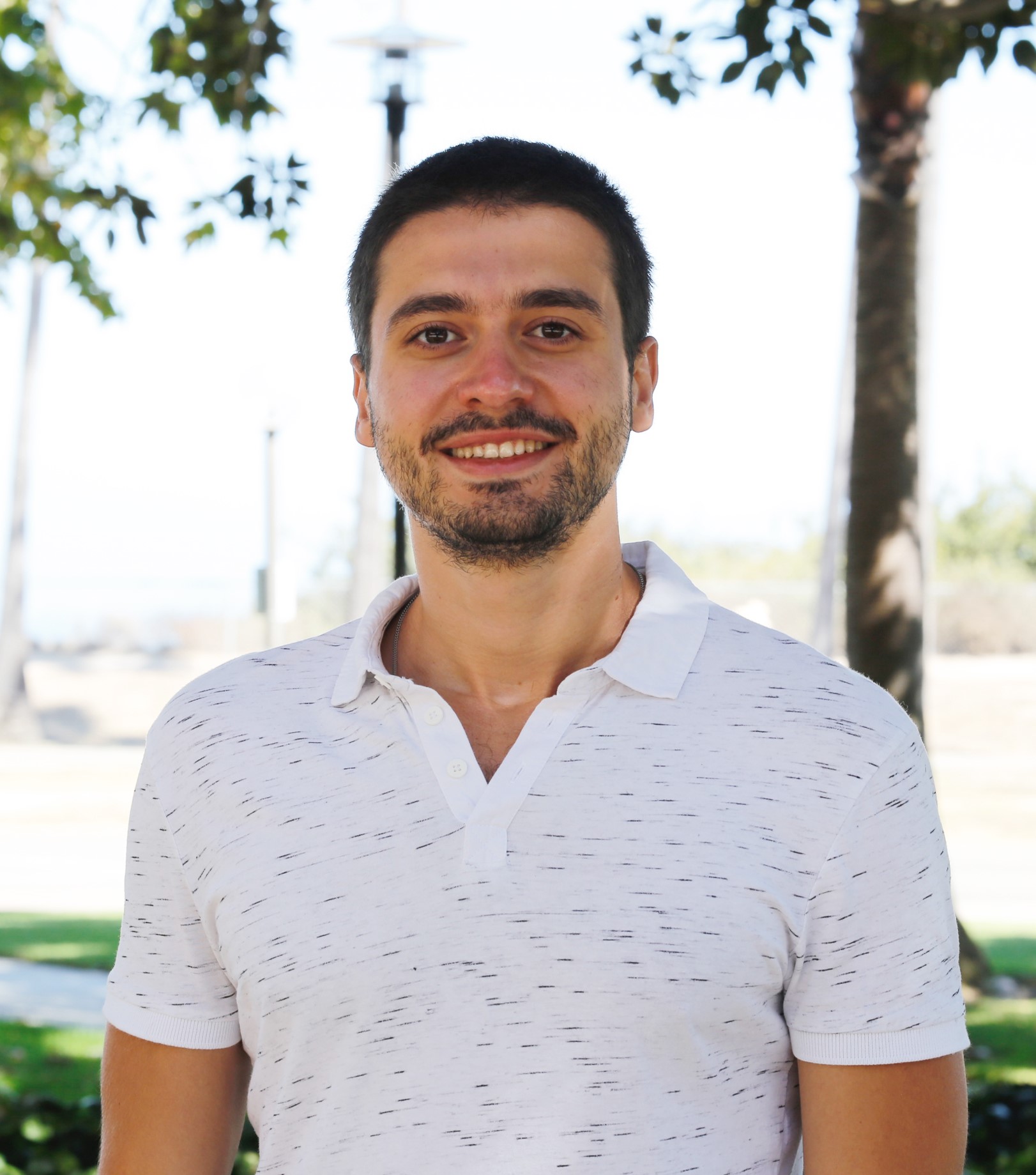}}]{BERKAY TURAN}is pursuing the Ph.D. degree in Electrical and Computer Engineering at the University of California, Santa Barbara. He received the B.Sc. degree in Electrical and Electronics Engineering and the B.Sc. degree in Physics from \ Bo\u gazi\c ci University, Istanbul, Turkey, in 2018. His research interests include optimization and learning for the design, control, and analysis of  multi-agent cyber-physical systems.
\end{IEEEbiography}
\vspace{-1.2cm}
\begin{IEEEbiography}[{\includegraphics[width=1in,height=1.21in,clip,keepaspectratio]{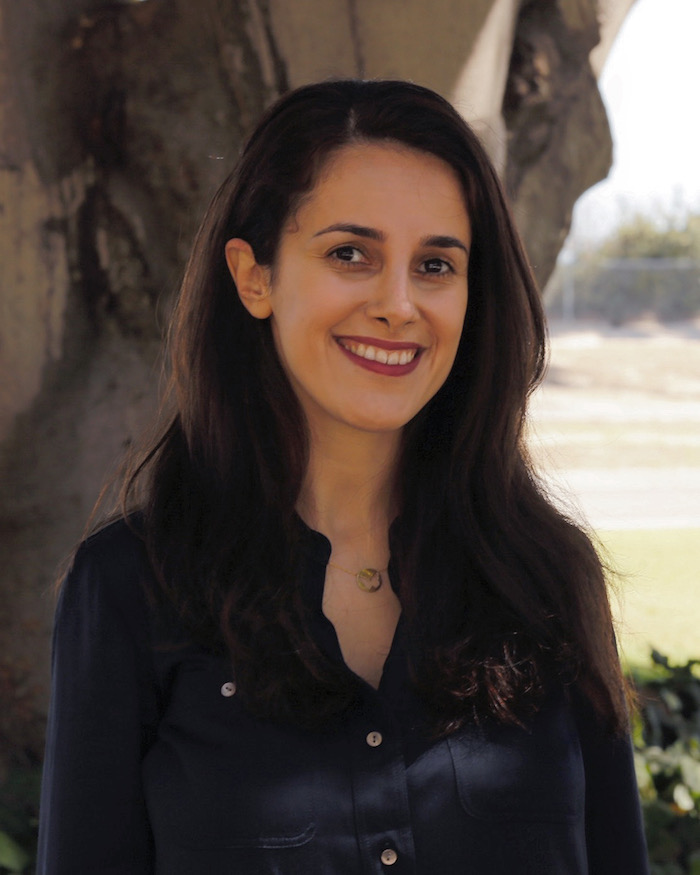}}]{MAHNOOSH ALIZADEH} is an assistant professor of Electrical and Computer Engineering at the University of California Santa Barbara. 
She received the B.Sc. degree (’09) in Electrical Engineering from Sharif University of Technology and the M.Sc. (’13) and Ph.D. (’14) degrees in Electrical and Computer Engineering from the University of California Davis. From 2014 to 2016, she was a postdoctoral scholar at Stanford University. Her research interests are focused on designing network control, optimization, and learning frameworks to promote efficiency and resiliency in societal-scale cyber-physical systems. Dr. Alizadeh is a recipient of the NSF CAREER award.
\end{IEEEbiography}

\appendix

\subsection{Proof of Proposition \ref{prop:monomarginalprices}}\label{sec:proofprop1}
For brevity of notation, let $\beta_c+p_i=P_i$. Let $\lambda_{ij}$ be the dual variables corresponding to the demand satisfaction constraints and $\mu^e_i$ be the dual variables corresponding to the flow balance constraints.
We can \btt{state} the dual problem as:
\begin{subequations}\label{eq:dualproblem}
\begin{align}
\label{eq:dualobjective}
    &\underset{\lambda_{ij},\mu_i^e}{\text{min}}\underset{\ell_{ij}^1}{\text{max}}
    & &\sum_{i=1}^\bt{n}\sum_{j=1}^\bt{n}\theta_{ij}D(\lija,\infty)\left(\ell_{ij}^1-\lambda_{ij}\right)\\
    & \text{subject to}
\label{eq:dualconstraint1}    & & \lambda_{ij}\geq 0,\\
 \label{eq:dualconstraint3}  & & & \lambda_{ij}+\mu_i^e-\mu_j^{e-e_{ij}}-\beta_t \tau_{ij}\leq 0,\\
\label{eq:dualconstraint4}    & & & \mu_i^e-\mu_i^{e+1}-P_i \leq  0\quad \forall i,j,e.
\end{align}
\end{subequations}
For fixed $\lambda_{ij}$
and $\mu_i^e$, the first order optimality condition is:
\begin{equation}
    \label{eq:firstorderopt}
    \frac{\partial D(\lija,\infty)}{\partial \lija}(\lija-\lambda_{ij})+ D(\lija,\infty)=0
\end{equation}
Depending on the region $\lija$ is in, the demand function $D(\lija,\infty)$ has different forms:
\begin{equation}
\label{eq:demandfunctionsmonopoly}
   \hspace{-.2cm} D(\lija,\infty){=}\left\{
      \begin{array}{ll}
      1{-}\frac{(\lija)^2}{2\lmax^2\sigma(1{-}\sigma)} &, \frac{\lija}{\lmax}<(1{-}\sigma) \\[1ex]
      \frac{1+\sigma-\frac{2\lija}{\lmax}}{2\sigma} &, (1-\sigma)\leq\frac{\lija}{\lmax} < \sigma \\[1ex]
      \frac{(1-\frac{\lija}{\lmax})^2}{2\sigma(1-\sigma)} & ,\sigma\leq \frac{\lija}{\lmax}\leq 1 \\
\end{array} 
\right. 
\end{equation}
First, suppose that $\frac{\lija}{\lmax}<(1-\sigma)$. Solving for $\lija$ in \eqref{eq:firstorderopt} using \eqref{eq:demandfunctionsmonopoly}, we get:
\begin{equation}
\label{eq:optimalpricesproof1}
    \lijm=\left({\dij+\sqrt{\dij^2+6\lmax^2\sigma(1-\sigma)}}\right)/{3}.
\end{equation}
Furthermore, the second order optimality condition satisfies:
\begin{equation}
    \label{eq:secondorderopt}
    \frac{\partial^2D(\lija,\infty)}{\partial (\lija)^2}(\lijm-\lambda_{ij})+2\frac{\partial D(\lija,\infty)}{\partial \lija}\Bigg\rvert_{\lija=\lijm}<0.
\end{equation}
Hence, KKT conditions are satisfied and the optimal primal solution satisfies the dual solution with optimal dual variables $\dijm$. By checking the condition $\lijm\leq(1-\sigma)\lmax$ using \eqref{eq:optimalpricesproof1}, we get the condition that $\dijm\leq\frac{3-5\sigma}{2}$.
The optimal prices for the regions where $\frac{\lijm}{\lmax}\in[1-\sigma,\sigma)$ and $\frac{\lijm}{\lmax}\in[\sigma,1]$ are derived in a similar fashion using the demand functions in those regions given in \eqref{eq:demandfunctionsmonopoly}.

To get the upper bound on prices, we go through the following algebraic calculations using the constraints. The inequality \eqref{eq:dualconstraint4} gives:
\begin{equation}
    \label{eq:dualconstraint5}
        \mu_i^{e-e_{ji}}\leq e_{ji}P_i+\mu_i^e,
\end{equation}
and equivalently:
\begin{equation}
    \label{eq:dualconstraint6}
    \mu_j^{e-e_{ij}}\leq e_{ij}P_j+\mu_j^e.
\end{equation}
The inequalities \eqref{eq:dualconstraint3} and \eqref{eq:dualconstraint1} yield:
\begin{equation*}
    \label{eq:dualconstraint7}
    \mu_i^e-\mu_j^{e-e_{ij}}-\beta_t \tau_{ij}\leq 0,
\end{equation*}
and equivalently:
\begin{equation}
    \label{eq:dualconstraint8}
    \mu_j^e-\mu_i^{e-e_{ji}}-\beta_t \tau_{ji}\leq 0,
\end{equation}
Inequalities \eqref{eq:dualconstraint5} and \eqref{eq:dualconstraint8}:
\begin{equation}
\label{eq:dualconstraint9}
\mu_j^e\leq \mu_i^e+\beta_t\tau_{ji}+e_{ji}P_i.
\end{equation}
And finally, the constraint \eqref{eq:dualconstraint3}:
\begin{align*}
    \lambda_{ij}\leq\beta \tau_{ij}+\mu_j^{e-e_{ij}}-\mu_i^e&\overset{\eqref{eq:dualconstraint6}}{\leq}\beta_t \tau_{ij}+e_{ij}P_j+\mu_j^e-\mu_i^e\\
    &\overset{\eqref{eq:dualconstraint9}}{\leq}\beta_t
    \tau_{ij}+e_{ij}P_j+\beta \tau_{ji}+e_{ji}P_i.
\end{align*}
Replacing $P_i=p_i+\beta_c$ and rearranging the terms:
\begin{align}
    \label{eq:lambdaupperbound}
    \lambda_{ij}{\leq} \beta_t(\tau_{ij}{+}\tau_{ji}){+}e_{ij}(p_j{+}\beta_c){+}e_{ji}(p_i{+}\beta_c){=}\overline{\lambda}_{ij},
\end{align}
where the last equality follows from the definition provided in the proposition. Hence, we get the desired upper bound on the prices using the upper bound on the dual variables.
\subsection{Proof of Proposition~\ref{prop:monoprofitcs}}\label{app:profcsmono}
 Using Assumption~\ref{ass:sigmalambda}, we see that $\frac{(3-5\sigma)}{2}\leq0$ and $\frac{(3-5\sigma)}{2}\lmax\leq\dijm\leq\underset{i,j}{\max} ~\overline{\lambda}_{ij}\leq\frac{(3\sigma-1)(3-\sigma)}{4(5-3\sigma)}\ell_{\max}\leq\frac{3\sigma-1}{2}\lmax$. Hence, the optimal prices fall in the region $[(1-\sigma)\lmax,\sigma\lmax)$, and are given by:
\begin{equation}
    \label{eq:optpriceslinear}
    \lijm=({(1+\sigma)\lmax+\dijm})/{4}.
\end{equation}
 The dual problem with optimal prices in \eqref{eq:optpriceslinear} can be \btt{stated} as:
\begin{subequations}
    \label{eq:dualwithoptimalprices}
    \begin{align}\label{eq:dualobjectivewithoptimalprices}
    &\underset{\lambda_{ij}, \mu_i^e}{\text{min}}
    & &\sum_{i=1}^\bt{n}\sum_{j=1}^\bt{n}\frac{\theta_{ij}}{4\sigma\ell_{\max}}\left(\frac{(1+\sigma)\lmax-2\lambda_{ij}}{2}\right)^2\\
    & \text{subject to}
    & & \eqref{eq:dualconstraint1}-\eqref{eq:dualconstraint4}.
    \end{align}
\end{subequations}
The objective function in \eqref{eq:dualobjectivewithoptimalprices} with optimal dual variables, along with \eqref{eq:optpriceslinear} suggests:
\begin{equation*}
    P^m=\sum_{i=1}^\bt{n}\sum_{j=1}^\bt{n}\frac{\theta_{ij}}{4\sigma\ell_{\max}}((1+\sigma)\ell_{\max}-2\lijm)^2,
\end{equation*}
where profits $P^m$ is the optimal value of the objective function of both primal and dual problems (Since the demand function is linear in the specified region, the problem is convex and KKT conditions are satisfied. Hence, strong duality holds).

 Consumer surplus is given by the difference between the price that customers pay and the price that they are willing to pay. For OD pair $(i,j)$ the customers with $v_1>\lijm$ have a positive surplus of $v_1-\lijm$ and the customers with $v_1\leq\lijm$ have a zero surplus since they either do not take the ride or have exactly a valuation of $\lijm$. Since $v_1=\sigma x+(1-\sigma)y$ and $x$ and $y$ are iid uniform random variables in $[0,\lmax]$, the consumer surplus for a single unit of potential \bt{riders} between OD pairs $(i,j)$ is computed as:
\begin{equation}
    \begin{split}
        &\int_0^{\lmax}\int_{\frac{\lijm-(1-\sigma)y}{\sigma}}^{\lmax}\frac{1}{\lmax^2}(\sigma x+(1-\sigma)y-\lijm)dxdy=\\
        &\frac{\lmax(\sigma^2+\sigma+1)-3\lijm(1+\sigma-\frac{\lijm}{\lmax})}{6\sigma}.
    \end{split}
\end{equation}
The total consumer surplus is then:
\begin{equation}
        \label{eq:csmonoproof}
    \textnormal{CS}^m\sum_{i=1}^\bt{n}\sum_{j=1}^\bt{n}\theta_{ij}\frac{\lmax(\sigma^2+\sigma+1)-3\lijm(1+\sigma-\frac{\lijm}{\lmax})}{6\sigma}.
\end{equation}
\subsection{Proof of Proposition~\ref{prop:unique}}\label{app:unique}
In order to prove that the firms are in an equilibrium, we first follow similar steps as the proof of Proposition \ref{prop:monomarginalprices} and determine the optimal prices. Suppose that $\lija,\;\lijb\in\left[\frac{1-\sigma}{2}\lmax,(1-\sigma)\lmax\right]$. In that region:
\begin{equation}
\label{eq:demandduo}
    D(\lija,\lijb)=\frac{4\sigma(1-\sigma-\frac{\lija-\lijb}{\lmax})-(\frac{\lija+\lijb}{\lmax}+\sigma-1)^2}{8\sigma(1-\sigma)}
\end{equation}
\begin{equation}
    \label{eq:demandderivduo}
    \frac{\partial D(\lija,\lijb)}{\partial\lija}=\frac{1}{\lmax}\frac{-6\sigma-\frac{2\lija}{\lmax}-\frac{2\lijb}{\lmax}+2}{8\sigma(1-\sigma)}
\end{equation}
\begin{equation}
    \label{eq:demandsecderivduo}
    \frac{\partial^2D(\lija,\lijb)}{\partial(\lija)^2}=\frac{1}{\lmax^2}\frac{-2}{8\sigma(1-\sigma)}
\end{equation}
Evaluated at $\lija=\lijb$, the above expressions become:
\begin{equation}
\label{eq:demandduoequal}
    D(\lija,\lijb)\Big\rvert_{\lija=\lijb}=\frac{1}{2}-\frac{(\frac{2\lija}{\lmax}+\sigma-1)^2}{8\sigma(1-\sigma)}
\end{equation}
\begin{equation}
    \label{eq:demanddderivduoequal}
    \frac{\partial D(\lija,\lijb)}{\partial\lija}\Bigg\rvert_{\lija=\lijb}=\frac{1}{\lmax}\frac{-6\sigma-\frac{4\lija}{\lmax}+2}{8\sigma(1-\sigma)}
\end{equation}
For a given $\lambda_{ij}$, the first order optimality condition is:
\begin{equation}
    \label{eq:firstoptduo}
    \frac{\partial D(\lija,\lijb)}{\partial\lija}\Bigg\rvert_{\lija=\lijb}(\lija-\lambda_{ij})+ D(\lija,\lijb)\Big\rvert_{\lija=\lijb}=0.
\end{equation}
We plug equations \eqref{eq:demandduoequal} and \eqref{eq:demanddderivduoequal} into the above expression to get a quadratic equation in $\lija$, which has two solutions. One of the solutions is infeasible with $\lija<0$. Hence, we get a unique solution at:
\begin{equation}\label{eq:optpricesduoproof}
\ell_{ij}^{d}=\frac{(3-5\sigma)\lmax+2\lambda_{ij}+\sqrt{\Delta_1}}{8},    
\end{equation}
where $\Delta_1=4\lmax^2+(2\lambda_{ij}+(15\sigma-3)\ell_{\max})(2\lambda_{ij}+(1-\sigma)\ell_{\max})$. Note that in the region where $\lija=\lijb\leq(1-\sigma)\lmax$, $D(\lija,\lijb)$ is concave and thus we need to check the second order optimality condition:
\begin{equation}
    \label{eq:secondoptduo}
    \frac{\partial^2D(\lija,\lijb)}{\partial(\lija)^2}(\lijd-\lambda_{ij})+2 \frac{\partial D(\lija,\lijb)}{\partial\lija}\Bigg\rvert_{\lija=\lijb=\lijd}<0.
\end{equation}
By plugging Equations \eqref{eq:demandsecderivduo}, \eqref{eq:demanddderivduoequal}, and \eqref{eq:optpricesduoproof} into the above expression, one verifies that it holds true. Hence, KKT conditions are satisfied and the optimal primal solution satisfies the dual solution with optimal dual variables $\lambda_{ij}^{d}$:
\begin{equation}\label{eq:optduoprices1}
  \ell_{ij}^{d}=\frac{(3-5\sigma)\lmax+2\lambda_{ij}^d+\sqrt{\Delta_1^*}}{8},
\end{equation}
where $\Delta_1^*=4\lmax^2+(2\lambda_{ij}^d+(15\sigma-3)\ell_{\max})(2\lambda_{ij}^d+(1-\sigma)\ell_{\max})$.
Since the conjecture was that $\lijd\in\left[\frac{1-\sigma}{2}\lmax,(1-\sigma)\lmax\right]$, we check:
$$
\frac{1-\sigma}{2}\lmax\leq\frac{(3-5\sigma)\lmax+2\dijd+\sqrt{\Delta_1^*}}{8}\leq(1-\sigma)\lmax,
$$
to get $\dijd\leq\frac{3(1-\sigma)^2}{2(1+\sigma)}$. For $\dijd=0$, \eqref{eq:optduoprices1} evaluates to $\frac{(3-5\sigma)+\sqrt{-15\sigma^2+18\sigma+1}}{8}\lmax\geq\frac{1-\sigma}{2}\lmax$, hence the prices fall in the specified region.

Now suppose that $\lija,\lijb\in((1-\sigma)\lmax,\frac{\sigma+1}{2}\lmax]$. In that region:
\begin{equation}
\label{eq:demandduo2}
    D(\lija,\lijb)=\frac{(1-\sigma+\frac{\lijb-\lija}{\lmax})(3+\sigma-\frac{3\lija+\lijb}{\lmax})}{8\sigma(1-\sigma)}
\end{equation}
By following the same steps as before, we get optimal prices uniquely as:
\begin{equation}
\label{eq:optduoprices2}
    \lijd=\frac{(5-3\sigma)\lmax+2\lambda_{ij}^d-\sqrt{\Delta_2^*}}{4},
\end{equation}
where $\Delta_2^*=2(\sigma\lmax-\dijd)^2+2(\lmax-\dijd)^2+11(\sigma-1)^2\lmax^2$. By imposing the condition that $\lijd\in((1-\sigma)\lmax,\frac{\sigma+1}{2}\lmax]$, one identifies:
\begin{equation}
    \frac{3(1-\sigma)^2}{2(1+\sigma)}\lmax<\dijd\leq\frac{3\sigma+1}{4}\lmax.
\end{equation}

The upper bound on the dual variables is derived identically to the Proposition \ref{prop:monomarginalprices}. Hence according to Assumption~\ref{ass:sigmalambda} $$\dijd\leq\overline{\lambda}_{ij}\leq\frac{(3\sigma-1)(3-\sigma)}{4(5-3\sigma)}\ell_{\max}<\frac{3\sigma+1}{2}\lmax,$$
is satisfied.

All in all, we get the optimal prices as:
\begin{equation}
    \label{eq:optduoprices}
    \lijd=\left\{
\begin{array}{ll}
      \frac{(3-5\sigma)\lmax+2\dijd+\sqrt{\Delta_1^*}}{8} & \frac{\dijd}{\lmax}\leq\frac{3(\sigma-1)^2}{2(\sigma+1)} \\
      \frac{(5-3\sigma)\lmax+2\lambda_{ij}^d-\sqrt{\Delta_2^*}}{4} & o.w. \\

\end{array} 
\right.
\end{equation}


We now show that when both firms set prices equal to $\{\ell_{ij}^d\}_{i,j\in\bt{\cal N}}$, they are in an equilibrium. Given firm $-f$'s prices equal to $\{\ell_{ij}^d\}_{i,j\in\bt{\cal N}}$, firm $f$ solves the following best response problem to determine its optimal prices:

\begin{subequations}
\label{eq:bestresponse}
\begin{align}
\nonumber&\underset{x_{ic}^e,x_{ij}^e,\ell_{ij}^{f}}{\text{max}}
\nonumber& &\sum_{i=1}^\bt{n}\sum_{j=1}^\bt{n}\theta_{ij}\ell_{ij}^fD(\ell_{ij}^f,\lijd)\\
\nonumber& & &-\sum_{i=1}^\bt{n}\sum_{e=0}^{e_{\max}-1} (\beta_c+p_i) x_{ic}^e\\
& & &-\beta_t \sum_{i=1}^\bt{n}\sum_{j=1}^\bt{n}\sum_{e=e_{ij}}^{e_{\max}} x_{ij}^e\tau_{ij} \\
& \text{subject to}
&  &\theta_{ij}D(\ell_{ij}^f,\lijd) \leq \sum_{e=e_{ij}}^{e_{\max}}x_{ij}^e \quad\forall i,j \in \bt{\cal N},\\
\nonumber& & &\eqref{eq:staticconst2}-\eqref{eq:staticconst8}.
\end{align}
\end{subequations}

The first order optimality condition states:
\begin{equation}
    \frac{\partial D(\ell_{ij}^f,\lijd)}{\partial\ell_{ij}^f}(\ell_{ij}^f-\lambda_{ij})+ D(\ell_{ij}^f,\lijd)=0.
\end{equation}

Setting $\ell_{ij}^f=\lijd$ satisfies the above equation with the optimal dual variable $\dijd$ because $\ell_{ij}^1=\ell_{ij}^2=\lijd$ is a solution to \eqref{eq:firstoptduo} with $\lambda_{ij}=\lambda_{ij}^d$. Since both firms have the identical best response problem \eqref{eq:bestresponse}, the first order condition is satisfied for both when $\ell_{ij}^1=\ell_{ij}^2=\lijd,\forall i,j\in\bt{\cal N}$, and hence the firms are in an equilibrium.

\subsection{Proof Of Proposition~\ref{prop:equilibrium}}\label{app:equilibrium}
We show that when $\lija\neq \lijb$ and both firms serve greater than zero demand  for an OD pair $(i,j)$, the firms cannot be in equilibrium. We do it by showing that the first order condition can not hold for both firms simultaneously.

We let $\lija=\lijb+\delta\lmax$, and add the following constraints:
\begin{itemize}
    \item We constrain $\delta<(1-\sigma)$ (If $\delta\geq(1-\sigma)$, then firm 1 does not serve any demand for that OD pair since the lines depicted on Figure~\ref{fig:duopolydemand} intersect at $y\geq\lmax$).
    \item We let $\lija+\lijb\geq(1-\sigma)\lmax$ (Otherwise if $\lija+\lijb=(1-\sigma)\lmax-2\epsilon$ lines depicted in Figure~\ref{fig:duopolydemand} intersect at $x=\frac{-2\epsilon}{2\sigma}$. Then both firms can increase their profits by increasing their prices by $\epsilon$, while keeping the demand same).
    \item We let $\lija+\lijb\leq(1+\sigma)\lmax$ (Otherwise, the lines depicted in Figure~\ref{fig:duopolydemand} intersect at $x\geq\lmax$, and hence their prices don't affect each others' demand. In that case, the prices are determined according to the monopoly prices, which are bounded by $\sigma\lmax$ according to Assumption~\ref{ass:sigmalambda} and hence their sum is always bounded by $(1+\sigma)\lmax$).
\end{itemize}
Depending on whether $\lija$ and $\lijb$ are greater than $(1-\sigma)\lmax$, we have different demand functions and hence we study the following three cases: 

\vspace{.2cm}
\textbf{Case 1:} Let $\lija,\lijb\leq(1-\sigma)\lmax$. For ease of notation, we define $\ell_f\eqdef\frac{\ell_{ij}^f}{\lmax}$ for firm $f$. When $\la,\lb\leq 1-\sigma$, the demand function for firm $f$ is given by:

\begin{equation}
\label{eq:demandduo0}
    D(\ell_{f},\ell_{-f})=\frac{4\sigma(1-\sigma-(\ell_f-\ell_{-f}))-(\ell_f+\ell_{-f}+\sigma-1)^2}{8\sigma(1-\sigma)}
\end{equation}
\begin{equation}
    \label{eq:demandderivduo0}
    \frac{\partial D(\ell_f,\ell_{-f})}{\partial\ell_f}=\frac{-6\sigma-2\ell_f-2\ell_{-f}+2}{8\sigma(1-\sigma)}
\end{equation}
Using \eqref{eq:demandduo0} and $\la-\lb=\delta$, the demand functions \btt{are determined} as:
\begin{equation}
\label{eq:demand1case1}
    D(\la,\lb)=\frac{4\sigma(1-\sigma-\delta)-(\sigma+2\la-1-\delta)^2}{8\sigma(1-\sigma)},
\end{equation}
\begin{equation}
\label{eq:demand2case1}
    D(\lb,\la)=\frac{4\sigma(1-\sigma+\delta)-(\sigma+2\la-1-\delta)^2}{8\sigma(1-\sigma)}.
\end{equation}
Furthermore, using \eqref{eq:demandderivduo0}, the derivatives of the demand functions \btt{are determined} as:
\begin{equation}
\label{eq:demandderivcase1}
    \frac{\partial D(\la,\lb)}{\partial \la}=\frac{\partial D(\lb,\la)}{\partial \lb}=\frac{-6\sigma-4\la+2+2\delta}{8\sigma(1-\sigma)}.
\end{equation}
In an equilibrium, both firms should satisfy the first order condition (FOC). We show that the FOC can not hold for both of the firms. Define $\lambda_f=\frac{\lambda_{ij}^f}{\lmax}$ for firm $f$ and let FOC for firm 2 hold:
\begin{equation}
    \frac{\partial D(\lb,\la)}{\partial \lb}(\lb-\lambda_2)+D(\lb,\la)=0.
\end{equation}
Using \eqref{eq:demand1case1}, \eqref{eq:demand2case1}, and \eqref{eq:demandderivcase1}, we can rewrite the above equation as:
\begin{equation}
\label{eq:case1foc}
\begin{split}
        &\frac{\partial D(\la,\lb)}{\partial \la}(\lb-\la+\la-\lambda_1+\lambda_1-\lambda_2)\\
        &+D(\la,\lb)-\frac{4\sigma(1-\sigma-\delta)}{8\sigma(1-\sigma)}+\frac{4\sigma(1-\sigma+\delta)}{8\sigma(1-\sigma)}\\
        &=\frac{\partial D(\la,\lb)}{\partial \la}(\la-\lambda_1)+D(\la,\lb)+\frac{\delta}{1-\sigma}\\
        &+\frac{\partial D(\la,\lb)}{\partial \la}(-\delta-(\lambda_2-\lambda_1))=0.
\end{split}
\end{equation}
To proceed, we use the following lemma:
\begin{lemma}
\label{lem:dualbounds}
    Let $|\delta|\leq 1-\sigma$, $\la-\lb=\delta$, and $\la,\lb\leq 1-\sigma$. If the prices satisfy the FOC, then the following inequality holds:
    \begin{equation}
        |\lambda_1-\lambda_2|\leq(2-\sigma(3\sigma-2))|\delta|
    \end{equation}
\end{lemma}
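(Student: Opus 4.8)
The plan is to extract the dual variables $\lambda_1$ and $\lambda_2$ explicitly from the two first-order conditions and then bound their difference directly. First I would write the FOC for firm $f$ as
$$\lambda_f=\ell_f+\frac{D(\ell_f,\ell_{-f})}{\partial D(\ell_f,\ell_{-f})/\partial \ell_f},$$
which is valid because, by \eqref{eq:demandderivcase1}, the common derivative $g\eqdef\partial D/\partial\ell_f=\frac{-6\sigma-4\la+2+2\delta}{8\sigma(1-\sigma)}$ is strictly negative in the relevant region (since $\la,\lb\le 1-\sigma\le 1$ and $\sigma\ge 1/2$ force the numerator below zero), so division is legitimate. Subtracting the two expressions and using $\la-\lb=\delta$ gives
$$\lambda_1-\lambda_2=\delta+\frac{D(\la,\lb)-D(\lb,\la)}{g}.$$
From \eqref{eq:demand1case1} and \eqref{eq:demand2case1} the numerator of the fraction is exactly $\frac{4\sigma(1-\sigma-\delta)-4\sigma(1-\sigma+\delta)}{8\sigma(1-\sigma)}=\frac{-\delta}{1-\sigma}$, so
$$\lambda_1-\lambda_2=\delta-\frac{\delta}{(1-\sigma)\,g}=\delta\left(1-\frac{1}{(1-\sigma)g}\right).$$

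Next I would bound the quantity $1/((1-\sigma)g)$ over the feasible range of prices. Writing $(1-\sigma)g=\frac{-6\sigma-4\la+2+2\delta}{8\sigma}$ and using $\delta=\la-\lb$ together with the constraint $\la+\lb\ge 1-\sigma$ (equivalently $2\la-\delta\ge 1-\sigma$) and $\la\le 1-\sigma$, one gets a two-sided bound on the affine expression $-6\sigma-4\la+2+2\delta=-6\sigma+2-2\la-2(2\la-\delta)$. The lower end of $2\la-\delta$ is $1-\sigma$ and $\la\le 1-\sigma$, so this is at least $-6\sigma+2-2(1-\sigma)-2(1+\sigma)\cdot(\text{something})$ — I would track the extreme values of $\la$ and $\delta$ separately to pin down $\min$ and $\max$ of $g$, hence of $1/((1-\sigma)g)$. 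Plugging these extremal values into $|\lambda_1-\lambda_2|=|\delta|\,\bigl|1-\tfrac{1}{(1-\sigma)g}\bigr|$ and maximizing over the feasible region should produce the stated coefficient $2-\sigma(3\sigma-2)=2-3\sigma^2+2\sigma$. Since the right-hand side is linear in $|\delta|$ and the bracket is a fixed bound independent of $\delta$ (depending only on the range of $\la$, which itself lies in a compact interval), the inequality $|\lambda_1-\lambda_2|\le(2-\sigma(3\sigma-2))|\delta|$ follows once the bracket is shown to be at most $2-\sigma(3\sigma-2)$.

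The main obstacle I anticipate is the bookkeeping in the last step: the bracket $\bigl|1-\tfrac{1}{(1-\sigma)g}\bigr|$ depends on $\la$ (and weakly on $\delta$) through $g$, and one must verify that its maximum over the admissible polytope $\{\,\la,\lb\le 1-\sigma,\ \la+\lb\ge 1-\sigma,\ |\delta|\le 1-\sigma\,\}$ is exactly $2-\sigma(3\sigma-2)$ for all $\sigma\in[1/2,1]$, rather than something larger. This requires identifying which vertex of the polytope is binding (I expect $\la=1-\sigma$ with $\lb$ as small as allowed, i.e. $\lb=0$, giving $\delta=1-\sigma$ and $g$ at its most negative), substituting, and simplifying — a finite but slightly delicate computation. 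The sign analysis of $g$ and the monotonicity of $t\mapsto 1/t$ on the negative axis are the conceptual ingredients; everything else is algebra that should collapse cleanly because the $\delta$-dependence factors out so neatly in the identity for $\lambda_1-\lambda_2$ derived above.
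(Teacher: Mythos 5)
Your exact identity is correct, and it is in fact the sharpest way to see what is going on: because $\partial D(\ell_f,\ell_{-f})/\partial\ell_f$ depends only on the sum $s\eqdef\ell_1+\ell_2$, subtracting the two first-order conditions gives, after simplification,
\begin{equation*}
\lambda_1-\lambda_2=\delta\left(1+\frac{4\sigma}{s+3\sigma-1}\right),
\end{equation*}
so the bracket you must control is a decreasing function of $s$ alone. The gap is precisely in the step you defer as ``delicate bookkeeping.'' Over the polytope you propose ($\ell_1,\ell_2\le1-\sigma$, $s\ge1-\sigma$, $|\delta|\le1-\sigma$) the bracket is maximized at $s=1-\sigma$ --- exactly the vertex you predict is binding --- where it equals $1+\frac{4\sigma}{2\sigma}=3$, not $2-\sigma(3\sigma-2)$; since $2-\sigma(3\sigma-2)<3$ for all $\sigma\in(0,1)$, your route as described proves only $|\lambda_1-\lambda_2|\le3|\delta|$, which is strictly weaker than the statement. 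Worse, the \emph{minimum} of the bracket over your region, attained at $s=2(1-\sigma)$, is $1+\frac{4\sigma}{1+\sigma}$, and one checks that $1+\frac{4\sigma}{1+\sigma}-\bigl(2-\sigma(3\sigma-2)\bigr)=\frac{3\sigma^3+\sigma^2+\sigma-1}{1+\sigma}>0$ for $\sigma\ge1/2$ (at $\sigma=1/2$ the comparison is $7/3$ versus $9/4$). So no vertex bookkeeping inside those constraints can produce the stated constant; any argument reaching it must import information beyond the polytope (e.g., dual feasibility $\lambda_f\ge0$, which through the FOC forces $s\ge\frac{3-5\sigma+\sqrt{-15\sigma^2+18\sigma+1}}{4}$ --- and even that still leaves the bracket at $2.6$ when $\sigma=1/2$). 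You should treat this as a red flag rather than as algebra that will ``collapse cleanly.''

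For comparison, the paper proceeds differently: it solves each firm's FOC as a quadratic in its own price to get $\ell_f$ explicitly as a function of $(\lambda_f,\delta)$, lower-bounds $\partial\ell_f/\partial\lambda_f$ by $1/(2-\sigma(3\sigma-2))$ using $\ell_f\le1-\sigma$ and monotonicity in $\lambda_f$ and $\delta$, and then inverts the difference quotient. Your identity exposes where the two computations part ways: the inversion step treats $\ell_1$ and $\ell_2$ as two values of a single monotone map $\lambda\mapsto\ell$, but the two firms' best-response maps differ through the sign of $\delta$, and the neglected $\delta$-shift between them is exactly the extra term $\delta/((1-\sigma)|g|)$ that your subtraction makes explicit. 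The constructive next step is not to finish your maximization (it cannot reach the target constant) but to reconcile your exact formula with the claimed bound --- either by identifying an additional hypothesis that shrinks the admissible range of $s$ enough, or by concluding that the constant in the statement needs to be revisited.
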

\begin{proof}
Using \eqref{eq:demand2case1} and \eqref{eq:demandderivcase1}, one can \btt{state} the FOC for a given price $\ell_{-f}=\ell_f+\delta$  to get a quadratic equation in $\ell_f$. This equation has two solutions, one of which is infeasible. Hence, we get the optimal price $\ell_f$ as:
\begin{equation}
\label{eq:case1optprice}
    \ell_f=\frac{3-5\sigma+2\lambda_f-3\delta+\sqrt{\Delta}}{8},
\end{equation}
where
$$
\Delta=(\delta+2\lambda_f+7\sigma-1)^2+32\sigma(\delta-2\sigma+1).
$$
We compute the change in the optimal price with respect to the dual variable as:
\begin{equation}
\label{eq:case1dlambda}
        \frac{\partial \ell_f}{\partial \lambda_f}=\frac{1}{4}\left(1+\frac{\delta+2\lambda_f+7\sigma-1}{\sqrt{\Delta}}\right)
\end{equation}
The goal is to lower bound $\frac{\partial \ell_{f}}{\partial \lambda_f}$. In order to do so, we study how $\frac{\partial \ell_f}{\partial \lambda_f}$ changes with $\lambda_f$ and $\delta$. We first observe that
$$
\frac{\partial^2 \ell_f}{\partial \lambda_f^2}<0,
$$
hence the we need to maximize $\lambda_f$ in order to minimize $\frac{\partial \ell_{f}}{\partial \lambda_f}$. Since $\ell_f$ is constrained to be less than $1-\sigma$, by upper bounding the expression in \eqref{eq:case1optprice} we get a bound on $\lambda_f$ as:
\begin{equation}
    \lambda_f\leq\frac{\delta^2+\delta(4-8\sigma)+3(\sigma-1)^2}{2(\delta+\sigma+1)}
\end{equation}
Next, we plug the upper bound on $\lambda_f$ to \eqref{eq:case1dlambda} to get an expression that is only dependent on $\sigma$ and $\delta$ and compute the partial derivative with respect to $\delta$ to get:
$$
\frac{\partial}{\partial \delta}\frac{\partial \ell_f}{\partial \lambda_f}<0,
$$
hence we maximize $\delta$ in order to minimize $\frac{\partial \ell_f}{\partial \lambda_f}$. We set $\delta=1-\sigma$ to get:
\begin{equation}
    \frac{\partial \ell_f}{\partial \lambda_f}\geq\frac{1}{2-\sigma(3\sigma-2)}.
\end{equation}
Above inequality hods for all $\ell_f\leq1-\sigma$. Since $\la,\lb\leq1-\sigma$, this means:
\begin{equation}
   \Big|\frac{\la-\lb}{\lambda_1-\lambda_2}\Big|\geq \frac{\la-\lb}{\lambda_1-\lambda_2}\geq\frac{1}{2-\sigma(3\sigma-2)}.
\end{equation}
Plugging $\la-\lb=\delta$ concludes the proof.
\end{proof}
Going back to \eqref{eq:case1foc}, we rearrange:
\bt{\begin{align*}
    &\frac{\partial D(\la,\lb)}{\partial \la}(\la-\lambda_1)+D(\la,\lb)\\&=-\frac{\delta}{1-\sigma}-\frac{\partial D(\la,\lb)}{\partial \la}(-\delta-(\lambda_2-\lambda_1))\\
    &\overset{\textnormal{Lemma~\ref{lem:dualbounds}}}{\leq}-\frac{\delta}{1-\sigma}-\frac{\partial D(\la,\lb)}{\partial \la}\delta(-1-(\sigma(3\sigma-2)-2))\\
    &=-\frac{\delta}{1-\sigma}-\delta(1-\sigma(3\sigma-2))\frac{-6\sigma-4\la+2+2\delta}{8\sigma(1-\sigma)}\\
    &=\frac{\delta}{8\sigma(1-\sigma)}(-8\sigma+(1-\sigma(3\sigma-2))
    (6\sigma+4\la-2-2\delta))\\
    &\overset{\la\leq 1-\sigma}{\leq}\frac{\delta}{8\sigma(1-\sigma)}(-8\sigma+(1-\sigma(3\sigma-2))
    (2+2\sigma-2\delta))\\
    &\leq\frac{\delta}{8\sigma(1-\sigma)}(-8\sigma+(1-\sigma(3\sigma-2))
    (2+2\sigma))\\
    &=-\frac{\delta}{4\sigma(1-\sigma)}(3\sigma^3+\sigma^2+\sigma-1)<0,\quad\forall \sigma\in[1/2,1]\tag{\stepcounter{equation}\theequation}.
\end{align*}
We conclude that FOC for firm 1 does not hold, hence they can not be in an equilibrium.}

\vspace{.2cm}
\textbf{Case 2:} Let $\la,\lb\geq(1-\sigma)$. In this region, the demand function and its derivative for firm $f$ can be \btt{stated} as:
\begin{equation}
\label{eq:demandduo20}
    D(\ell_f,\ell_{-f})=\frac{(1-\sigma+(\ell_{-f}-\ell_f))(3+\sigma-(3\ell_f+\ell_{-f}))}{8\sigma(1-\sigma)}
\end{equation}
\begin{equation}
    \label{eq:demandderivduo20}
    \frac{\partial D(\ell_f,\ell_{-f})}{\partial\ell_f}=\frac{-6+2\sigma+6\ell_f-2\ell_{-f}}{8\sigma(1-\sigma)}
\end{equation}
Using the above equations and $\la-\lb=\delta$, we write:
\begin{equation}
\label{eq:case2demand1}
    D(\la,\lb)=\frac{(1-\sigma-\delta)(3+\sigma-4\lb-3\delta)}{8\sigma(1-\sigma)},
\end{equation}
\begin{equation}
\label{eq:case2demand2}
        D(\lb,\la)=\frac{(1-\sigma+\delta)(3+\sigma-4\lb-\delta)}{8\sigma(1-\sigma)},
\end{equation}
\begin{equation}
\label{eq:case2demandderiv1}
    \frac{\partial D(\la,\lb)}{\partial \la}=\frac{-6+2\sigma+4\lb+6\delta}{8\sigma(1-\sigma)},
\end{equation}
\begin{equation}
\label{eq:case2demandderiv2}
    \frac{\partial D(\lb,\la)}{\partial \lb}=\frac{-6+2\sigma+4\lb-2\delta}{8\sigma(1-\sigma)}.
\end{equation}

We follow similar steps as in Case 1 to show that FOC for both firms can not hold. We \btt{state} the FOC for firm 2:
\begin{equation}
\begin{split}
        &\frac{\partial D(\lb,\la)}{\partial \lb}(\lb-\lambda_2)+D(\lb,\la)\\
    &\overset{\eqref{eq:case2demandderiv1},\eqref{eq:case2demandderiv2}}{=}\left(\frac{\partial D(\la,\lb)}{\partial \la}-\frac{8\delta}{8\sigma(1-\sigma)}\right)(\lb-\lambda_2)+D(\lb,\la)\\
    &\overset{\eqref{eq:case2demand1},\eqref{eq:case2demand2}}{=}\left(\frac{\partial D(\la,\lb)}{\partial \la}-\frac{8\delta}{8\sigma(1-\sigma)}\right)(\lb-\lambda_2)\\
    &\quad\quad+D(\la,\lb)+\frac{\delta(8-4\delta-8\lb)}{8\sigma(1-\sigma)}\\
    &=\frac{\partial D(\la,\lb)}{\partial \la}(\lb-\la+\la-\lambda_1+\lambda_1-\lambda_2)\\
    &\quad-\frac{8\delta}{8\sigma(1-\sigma)}(\lb-\lambda_2)+D(\la,\lb)+\frac{\delta(8-4\delta-8\lb)}{8\sigma(1-\sigma)}\\
    &=\frac{\partial D(\la,\lb)}{\partial \la}(\la-\lambda_1)+D(\la,\lb)-\frac{8\delta}{8\sigma(1-\sigma)}(\lb-\lambda_2)\\
    &\quad+\frac{\partial D(\la,\lb)}{\partial \la}(-\delta+\lambda_1-\lambda_2)+\frac{\delta(8-4\delta-8\lb)}{8\sigma(1-\sigma)}\\
    &=0
\end{split}
\end{equation}
For the FOC of firm 1 to hold, the following expression has to be equal to 0:
\begin{equation}
\label{eq:case2expression}
\begin{split}
       &\frac{8\delta}{8\sigma(1-\sigma)}(\lb-\lambda_2)-\frac{\partial D(\la,\lb)}{\partial \la}(-\delta+\lambda_1-\lambda_2)\\
       &-\frac{\delta(8-4\delta-8\lb)}{8\sigma(1-\sigma)}
\end{split}
\end{equation}
We show that the above expression is always less than zero by upper bounding it. To proceed, we use the following lemma:
\begin{lemma}
\label{lem:dualbounds2}
 Let $|\delta|\leq 1-\sigma$, $\la-\lb=\delta$, and $\la,\lb\geq 1-\sigma$. If the prices satisfy the FOC, then the following inequality holds:
 \begin{equation}
 \label{eq:dualbounds2}
     |\lambda_1-\lambda_2|\leq\frac{2|\delta|}{1-\frac{2\overline{\lambda}-2\sigma}{\sqrt{48(1-\sigma)^2+(2\overline{\lambda}-2\sigma)^2}}},
 \end{equation}
 where
 $$\overline{\lambda}=\frac{(3-\sigma)(3\sigma-1)}{4(5-3\sigma)}.$$
\end{lemma}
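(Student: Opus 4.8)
The plan is to run the same scheme as the proof of Lemma~\ref{lem:dualbounds}, but now in the region $\la,\lb\ge 1-\sigma$ where the per-firm demand and its derivative are given by \eqref{eq:demandduo20}--\eqref{eq:demandderivduo20}. First I would substitute $\ell_{-f}=\ell_f+\delta$ into the first-order condition $\partial_{\ell_f}D(\ell_f,\ell_{-f})(\ell_f-\lambda_f)+D(\ell_f,\ell_{-f})=0$; using \eqref{eq:case2demand2} and \eqref{eq:case2demandderiv2} this collapses to a quadratic in $\ell_f$ with two roots, the smaller of which violates $\ell_f\ge 1-\sigma$ and is discarded. The admissible root gives the best-response price explicitly as a function of the dual variable, the gap, and $\sigma$,
\begin{equation*}
\ell_f=h(\lambda_f,\delta)=\tfrac18\Big(4\lambda_f+6\delta+10-6\sigma-\sqrt{\Delta}\Big),
\end{equation*}
with $\Delta$ quadratic in $\lambda_f$ (for $\delta=0$ this reduces to \eqref{eq:optduoprices2}), and the second-order condition is checked to hold as in the proof of Proposition~\ref{prop:monomarginalprices}.

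The next step is a short algebraic simplification showing that, with $u\eqdef 1-\sigma+\delta$, one can write $\Delta=4(2\lambda_f+u-2)^2+48u^2$, so that differentiating gives the clean expression
\begin{equation*}
\frac{\partial h}{\partial\lambda_f}=\frac12\left(1-\frac{v}{\sqrt{v^2+12u^2}}\right),\qquad v\eqdef 2\lambda_f+u-2=2\lambda_f-1-\sigma+\delta .
\end{equation*}
Since $\partial^2 h/\partial\lambda_f^2<0$, the right-hand side is decreasing in $\lambda_f$, so it is minimized at the largest admissible $\lambda_f$; the upper bound $\lambda_f\le\overline{\lambda}=\tfrac{(3-\sigma)(3\sigma-1)}{4(5-3\sigma)}$ on the dual variable is obtained exactly as in the proof of Proposition~\ref{prop:monomarginalprices} together with Assumption~\ref{ass:sigmalambda}. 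Because $\overline{\lambda}<\sigma$ for all $\sigma\in[1/2,1]$ and $|\delta|\le 1-\sigma$, one has $v\le 2\overline{\lambda}-2\sigma<0$ throughout; for $v<0$ the scalar map $(v,u)\mapsto v/\sqrt{v^2+12u^2}$ is increasing in both $v$ and $u$, and both $v$ and $u$ increase with $\delta$, so the minimum of $\partial h/\partial\lambda_f$ over the box $0\le\lambda_f\le\overline{\lambda}$, $|\delta|\le 1-\sigma$ is attained at $\lambda_f=\overline{\lambda}$, $\delta=1-\sigma$, where $u=2(1-\sigma)$, $v=2\overline{\lambda}-2\sigma$ and $12u^2=48(1-\sigma)^2$. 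Hence
\begin{equation*}
\frac{\partial h}{\partial\lambda_f}\ \ge\ c\eqdef\frac12\left(1-\frac{2\overline{\lambda}-2\sigma}{\sqrt{48(1-\sigma)^2+(2\overline{\lambda}-2\sigma)^2}}\right)>0 .
\end{equation*}

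Finally, exactly as at the end of the proof of Lemma~\ref{lem:dualbounds}, I would use that both firms satisfy their first-order conditions, so $\la=h(\lambda_1,-\delta)$ and $\lb=h(\lambda_2,\delta)$, together with the uniform slope bound $\partial h/\partial\lambda_f\ge c$, to conclude $\big|(\la-\lb)/(\lambda_1-\lambda_2)\big|\ge c$; substituting $\la-\lb=\delta$ then yields $|\lambda_1-\lambda_2|\le|\delta|/c$, which is precisely \eqref{eq:dualbounds2}.

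I expect the bulk of the work — and the only place real care is needed — to be the monotonicity bookkeeping in the third step: verifying the signs of $\partial h/\partial\lambda_f$, $\partial^2 h/\partial\lambda_f^2$, and of $\partial_u\big(v/\sqrt{v^2+12u^2}\big)$ over the entire admissible region (including checking that the extremal pair $(\lambda_f,\delta)=(\overline{\lambda},1-\sigma)$ is consistent with $\ell_f\ge 1-\sigma$ and with the intersection-geometry constraints imposed at the start of Case~2), and noting, as in Lemma~\ref{lem:dualbounds}, that the slope bound $c$ must hold uniformly for $\delta\in[-(1-\sigma),1-\sigma]$ because the two firms face gaps of opposite sign.
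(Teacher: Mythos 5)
Your proposal matches the paper's proof essentially step for step: the same explicit root of the FOC quadratic (your discriminant is just four times the paper's, so the best-response formulas agree), the same lower bound on $\partial \ell_f/\partial\lambda_f$ obtained by pushing $\lambda_f$ up to $\overline{\lambda}$ and $\delta$ up to $1-\sigma$, and the same inversion of the slope bound to reach \eqref{eq:dualbounds2}. The only (cosmetic) difference is your $(u,v)$ substitution, which handles the joint monotonicity in $\lambda_f$ and $\delta$ somewhat more transparently than the paper's direct sign checks on $\partial^2\ell_f/\partial\lambda_f^2$ and $\tfrac{\partial}{\partial\delta}\tfrac{\partial\ell_f}{\partial\lambda_f}$.
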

\begin{proof}
Using \eqref{eq:case2demand2} and \eqref{eq:case2demandderiv2}, one can \btt{state} the FOC for a given price $\ell_{-f}=\ell_f+\delta$ to get a quadratic equation in $\ell_f$. This equation has two solutions, one of which is infeasible. Hence, we get the optimal price $\ell_f$ as:
\begin{equation}
\label{eq:case2optprice}
    \ell_f=\frac{5-3\sigma+3\delta+2\lambda_f-\sqrt{\Delta}}{4}
\end{equation}
where
$$
\Delta=(\delta-\sigma-1+2\lambda_f)^2+12(\sigma-1)^2+12\delta(\delta+2-2\sigma).
$$
Similar to Lemma~\ref{lem:dualbounds}, the goal is to lower bound $\frac{\partial \ell_f}{\partial \lambda_f}$. It is computed as:
\begin{equation}
\label{eq:case2dlambda}
    \frac{\partial \ell_f}{\partial\lambda_f}=\frac{1}{4}\left(2-\frac{2(\delta+\lambda_f-\sigma-1)}{\sqrt{\Delta}}\right)
\end{equation}
In order to minimize the RHS of the above expression, we study how it depends on the variables it is a function of. We first identify that
\begin{equation}
    \frac{\partial^2 \ell_f}{\partial\lambda_f^2}<0,\quad \frac{\partial}{\partial \delta}\frac{\partial \ell_f}{\partial\lambda_f}<0,
\end{equation}
and hence $ \frac{\partial \ell_f}{\partial\lambda_f}$ is minimized when $\delta=1-\sigma$ and $\lambda_f=\frac{(3-\sigma)(3\sigma-1)}{4(5-3\sigma)}$. We plug these expressions to $\frac{\partial \ell_f}{\partial\lambda_f}$ to get:
\begin{equation}
    \frac{\partial \ell_f}{\partial\lambda_f}\geq\frac{1}{2}\left(1-\frac{2\overline{\lambda}-2\sigma}{\sqrt{48(1-\sigma)^2+(2\overline{\lambda}-2\sigma)^2}}\right),
\end{equation}
where $\overline{\lambda}\eqdef\frac{(3-\sigma)(3\sigma-1)}{4(5-3\sigma)}$.

The above inequality holds for all $\ell_f\geq 1-\sigma$ as long as the FOC holds, hence this means:
\begin{equation}
   \Big|\frac{\la-\lb}{\lambda_1-\lambda_2}\Big|\geq \frac{\la-\lb}{\lambda_1-\lambda_2}\geq\frac{1-\frac{2\overline{\lambda}-2\sigma}{\sqrt{48(1-\sigma)^2+(2\overline{\lambda}-2\sigma)^2}}}{2}.
\end{equation}
Plugging $\ell_1-\ell_2=\delta$ concludes the proof.
\end{proof}
\bt{The upper bound \eqref{eq:dualbounds2} in Lemma~\ref{lem:dualbounds2} is concave in $\sigma$ and is decreasing with $\sigma$ in the interval $[0,1]$. For brevity of exposition, we therefore use a linear upper bound in $\sigma$. It can be shown that
\begin{equation}\label{eq:case2linearbound}
        \frac{2}{1-\frac{2\overline{\lambda}-2\sigma}{\sqrt{48(1-\sigma)^2+(2\overline{\lambda}-2\sigma)^2}}}\leq \frac{9-5\sigma}{4}, \quad\forall \sigma\in[0,1].
\end{equation}
Using \eqref{eq:case2linearbound}, we upper bound \eqref{eq:case2expression}:
\begin{equation}
\label{eq:case2exptoupperbound}
\begin{split}
       &\frac{8\delta}{8\sigma(1-\sigma)}(\lb-\lambda_2)-\frac{\partial D(\la,\lb)}{\partial \la}(-\delta+\lambda_1-\lambda_2)\\
       &-\frac{\delta(8-4\delta-8\lb)}{8\sigma(1-\sigma)}\\
       &< \frac{\delta}{8\sigma(1-\sigma)}\Big(8(\lb-\lambda_2)-(8-4\delta-8\lb)\\
       &\hspace{2cm}-\frac{5-5\sigma}{4}(-6+2\sigma+4\lb+6\delta)\Big)
\end{split}
\end{equation}
Given $\lambda_2$ and $\delta$, $\lb$ is uniquely determined as in \eqref{eq:case2optprice}.
We plug $\lb$ to \eqref{eq:case2exptoupperbound}, and conjecture that $\eqref{eq:case2exptoupperbound}<0$. That gives:
\begin{equation}
\begin{split}
   \delta(45\sigma+19)+(1-\sigma)(5\sigma-10\lambda_2+53)<(5\sigma+11)\sqrt{\Delta}
\end{split}
\end{equation}
We take the square of both sides, collect terms on LHS, and re-state the conjecture as:
\begin{equation}
    f(\sigma,\lambda_2,\delta)<0,
\end{equation}
where
\begin{equation}
    \begin{split}
 f(\sigma,\lambda_2,\delta)=&\delta^2(1700\sigma^2+280\sigma-1212)\\
 &+8\delta(25\sigma^2-275\sigma^2+459\sigma-81)\\
 &-300\sigma^4-400\sigma^3+2296\sigma^2-5856\sigma\\
 &-128(5\sigma+3)\lambda_2^2+32\lambda_2\delta(25\sigma^2-30\sigma-27)\\
 &-64\lambda_2(5\sigma^2-46\sigma+9)+1236
    \end{split}
\end{equation}
Our goal is to maximize $f(\sigma,\lambda_2,\delta)$ and show that it is less than $0$. We first identify that $\frac{\partial f(\sigma,\lambda_2,\delta)}{\partial \lambda_2}>0$, hence $f(\sigma,\lambda_2,\delta)$ is maximized when $\lambda_2$ is maximized. We evaluate $f(\sigma,\lambda_2,\delta)$ at $\lambda_2=\frac{(3\sigma-1)(3-\sigma)}{4(5-3\sigma)}$.
We next identify that $\frac{\partial f(\sigma,,\delta)}{\partial \delta}>0$, hence set $\delta=1-\sigma$ to get:
\begin{equation}
\begin{split}
        &f(\sigma,\lambda_2,\delta)\leq f(\sigma,\frac{(3\sigma-1)(3-\sigma)}{4(5-3\sigma)},1-\sigma)\\
        &{=}\frac{8(3\sigma{-}1)(5\sigma{-}7)({-}186+507\sigma{-}209\sigma^2{-}155\sigma^2{+}75\sigma^4)}{(5-3\sigma)^2}.\\
\end{split}
\end{equation}
The above equation has roots at $\sigma\approx-1.8406$, $\sigma=1/3$, $\sigma\approx 0.49744$, $\sigma\approx 1.2599$, and $\sigma=7/5$ and therefore is less than 0 for $\sigma\in[1/2,1]$. Hence, we conclude that $f(\sigma,\lambda_2,\delta)<0$ and the conjecture was true.} Going back, this means that the final expression in \eqref{eq:case2exptoupperbound} is less than zero, which means the expression in \eqref{eq:case2expression} is less than zero, meaning that the FOC of firm 1:
\begin{equation}
    \frac{\partial D(\la,\lb)}{\partial \la}(\la-\lambda_1)+D(\la,\lb)<0.
\end{equation}
Hence, the FOC for firm 1 can not hold, meaning the firms can not be in an equilibrium.

\vspace{.2cm}
\textbf{Case 3:} Let $\lb\leq 1-\sigma$, $\la\geq 1-\sigma$. We show by contradiction that the FOC-satisfying prices can not be $\delta$ apart. We know that if the prices are in equilibrium, FOC holds for both. The optimal prices are given by \eqref{eq:case1optprice} and \eqref{eq:case2optprice} (replacing $\delta$ by $-\delta$) as:
\begin{equation}
    \ell_1=\frac{5-3\sigma-3\delta+2\lambda_1-\sqrt{\Delta_1}}{4},
\end{equation}
\begin{equation}
    \ell_2=\frac{3-5\sigma+2\lambda_2-3\delta+\sqrt{\Delta_2}}{8},
\end{equation}
where
$$
\Delta_1=(-\delta-\sigma-1+2\lambda_1)^2+12(\sigma-1)^2+12\delta(\delta-2+2\sigma),
$$
and
$$
\Delta_2=(\delta+2\lambda_2+7\sigma-1)^2+32\sigma(\delta-2\sigma+1).
$$
But since $\ell_1=\ell_2+\delta$, the following must be true:
\begin{equation}
    \frac{3-5\sigma+2\lambda_2+5\delta+\sqrt{\Delta_2}}{8}=\frac{5-3\sigma-3\delta+2\lambda_1-\sqrt{\Delta_1}}{4}.
\end{equation}
We show that the above equality can not hold by upper bounding the following function, which is the difference between the LHS and the RHS of the above equality:
\begin{equation}
\label{eq:case3fntobound}
    g(\lambda_1,\lambda_2,\delta,\sigma)=\frac{-7+\sigma+11\delta+2\lambda_2-4\lambda_1+\sqrt{\Delta_2}-2\sqrt{\Delta_1}}{8}.
\end{equation}
In order to upper bound the above function, we use the following lemma:
\begin{lemma}\label{lem:dualbounds3}
 Let $|\delta|\leq 1-\sigma$ and $\la-\lb=\delta$. If the prices satisfy the FOC, then the following inequality holds:
 \begin{equation}
     |\lambda_1-\lambda_2|\geq |\delta|
 \end{equation}
\end{lemma}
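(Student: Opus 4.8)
The plan is to prove Lemma~\ref{lem:dualbounds3} by solving each firm's first-order condition for that firm's own dual variable and then showing that the resulting gap $\lambda_1-\lambda_2$ is always at least $\delta$ in magnitude. In the Case~3 configuration we have $\ell_1\ge(1-\sigma)\lmax\ge\ell_2$, so $\delta=\ell_1-\ell_2\ge 0$; if $\delta=0$ the claim is trivial, so assume $\delta>0$, and keep all quantities normalized by $\lmax$ as in the rest of the proof. Since each firm's FOC is affine in its own dual variable, solve it as $\lambda_f=\ell_f-m_f$, where $m_f\eqdef -D_f/(\partial D_f/\partial\ell_f)>0$ is the markup, with $D_1$ and $\partial D_1/\partial\ell_1$ taken from the linear branch \eqref{eq:demandduo20}--\eqref{eq:demandderivduo20} (because $\ell_1\ge 1-\sigma$), $D_2$ and $\partial D_2/\partial\ell_2$ from the concave branch \eqref{eq:demandduo0}--\eqref{eq:demandderivduo0} (because $\ell_2\le 1-\sigma$), both evaluated at $(\ell_1,\ell_2)=(\ell_2+\delta,\ell_2)$. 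Then $\lambda_1-\lambda_2=\delta+(m_2-m_1)$, so it suffices to prove $m_2\ge m_1$ (which gives $\lambda_1-\lambda_2\ge\delta\ge|\delta|$); the alternative $m_1-m_2\ge 2\delta$ would also suffice, but numerically the former always holds.

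Next I would perform the substitution $c\eqdef 1-\sigma-\delta$ and $a\eqdef 2\ell_2+\delta+\sigma-1$. The markups then become the rational functions $m_1=\dfrac{c(c+4\sigma-2a)}{2(2c+2\sigma-a)}$ and $m_2=\dfrac{4\sigma(2-2\sigma-c)-a^2}{2(2\sigma+a)}$, and the Case~3 constraints reduce to $\sigma\in[1/2,1)$, $\delta\in(0,1-\sigma)$, and $a\in\bigl[c,\ \min\{c+2\delta,\ 2\sigma-\delta\}\bigr]$ — the left end $a=c$ corresponding to firm~1 sitting at the threshold $(1-\sigma)\lmax$, the two right-end candidates to firm~2 at the threshold and to firm~1 at $\tfrac{1+\sigma}{2}\lmax$ — with the remaining requirements $D_1,D_2>0$ and $\ell_1+\ell_2\in[(1-\sigma)\lmax,(1+\sigma)\lmax]$ never binding tighter for $\sigma\ge 1/2$. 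Both denominators $2c+2\sigma-a$ and $2\sigma+a$ are positive on this region, so $m_2\ge m_1$ is equivalent to $F(a)\ge 0$, where
\begin{equation*}
F(a)\eqdef\bigl(4\sigma(2-2\sigma-c)-a^2\bigr)(2c+2\sigma-a)-c(c+4\sigma-2a)(2\sigma+a)
\end{equation*}
is a monic cubic in $a$. A short computation gives the clean value $F(c)=8\sigma(c+2\sigma)\,\delta\ge 0$ at the left endpoint, and $F'(c)=2c^2-8\sigma(1-\sigma)<0$ for $\sigma\ge 1/2$, so $F$ is decreasing near $a=c$. It then remains to verify $F\ge 0$ up to the right endpoint: one checks that $F'$ does not vanish twice inside the admissible interval (so $F$ has no interior local minimum there) and evaluates $F$ at $a=\min\{c+2\delta,\,2\sigma-\delta\}$, splitting into $\sigma\ge 3/5$ (right end $c+2\delta$, firm~2 at the threshold) and $1/2\le\sigma<3/5$ (right end possibly $2\sigma-\delta$, firm~1 at $\tfrac{1+\sigma}{2}\lmax$); in each case substituting and simplifying leaves a polynomial in $(\sigma,\delta)$ that is nonnegative on the allowed range.

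The main obstacle is bookkeeping rather than a single conceptual hurdle: the right end of the $a$-interval is the minimum of two expressions whose ordering flips at $\sigma=3/5$, so the endpoint verification is two sub-cases, and the reduction "only the endpoints of $[c,\cdot]$ matter" relies on the sign analysis of $F'$ holding uniformly over the whole interval. Should $F'$ turn out to vanish twice there, the fallback is to write $F(a)=F(c)+(a-c)\,G(a)$ and show $G$ stays above the (small negative) level needed for $F$ to remain nonnegative up to the right endpoint. Either way the substantive content is a finite polynomial sign check, with the identity $F(c)=8\sigma(c+2\sigma)\delta$ doing the conceptual work by pinning the sign at the threshold $\ell_1=(1-\sigma)\lmax$ where the two demand branches meet.
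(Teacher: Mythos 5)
Your route is genuinely different from the paper's. The paper bounds the derivative $\partial\ell_f/\partial\lambda_f$ along each branch (reusing the facts $\partial^2\ell_f/\partial\lambda_f^2<0$ and $\frac{\partial}{\partial\delta}\frac{\partial\ell_f}{\partial\lambda_f}<0$ from the proofs of Lemmas~\ref{lem:dualbounds} and~\ref{lem:dualbounds2}), shows $\partial\ell_f/\partial\lambda_f\leq 1$ uniformly by evaluating at $\lambda_f=0$, $\delta=0$, and then inverts to get $\partial\lambda_f/\partial\ell_f\geq 1$ and hence $|\lambda_1-\lambda_2|\geq|\ell_1-\ell_2|$. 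You instead solve each FOC exactly for the dual variable, $\lambda_f=\ell_f-m_f$ with $m_f=-D_f/(\partial D_f/\partial\ell_f)$, and reduce the claim to the markup comparison $m_2\geq m_1$, i.e.\ to the sign of a single monic cubic $F$ in $a=2\lb+\delta+\sigma-1$. I checked your algebra: the expressions for $m_1$ and $m_2$, the identity $F(c)=8\sigma(c+2\sigma)\delta$, and $F'(c)=2c^2-8\sigma(1-\sigma)<0$ are all correct, and your signed conclusion $\lambda_1-\lambda_2\geq\delta$ is exactly the form invoked downstream in Case~3 ($\lambda_2\leq\lambda_1-\delta$), which the paper's absolute-value statement leaves implicit. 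One caveat: you only treat the mixed configuration $\lb\leq(1-\sigma)\lmax\leq\la$, whereas the lemma as stated is branch-agnostic (the paper's derivative bound also covers the two same-branch configurations); since the lemma is only ever applied in the mixed case this is harmless for the application, but it does not prove the statement as written.

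The genuine gap is in the reduction ``only the endpoints of $[c,a_{\max}]$ matter.'' Since $F$ is a monic cubic with $F'(c)<0$, the upward parabola $F'$ has exactly one root $r_2$ to the right of $c$, and if $r_2$ lands inside the admissible interval then $F$ has an interior local \emph{minimum} there; your parenthetical ``$F'$ does not vanish twice inside, so no interior local minimum'' has the logic backwards, because a single interior zero of $F'$ (crossing from negative to positive) is precisely such a minimum, and endpoint evaluation would then not suffice. What actually rescues the argument is that $r_2=\frac{2\sigma+\sqrt{4\sigma^2+3(A+c^2)}}{3}$ with $A=4\sigma(2-2\sigma-c)\geq 4\sigma(1-\sigma)$, which yields $r_2\geq\frac{2\sigma+2\sqrt{\sigma(3-2\sigma)}}{3}\geq 2(1-\sigma)\geq c+2\delta\geq a_{\max}$ for all $\sigma\in[1/2,1]$, so $F$ is in fact decreasing on the whole interval and its minimum is the right endpoint. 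You need to prove that chain of inequalities rather than leave it as ``one checks,'' and you still owe the terminal evaluations $F(c+2\delta)\geq 0$ and, for $\sigma<3/5$, $F(2\sigma-\delta)\geq 0$; spot checks indicate both hold, but as submitted the proof is a plan whose two decisive polynomial verifications are deferred.
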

\begin{proof}
Given $\ell_{-f}=\ell_f+\delta$, the optimal prices for firm $f$ are given by equations \eqref{eq:case1optprice} and \eqref{eq:case2optprice}, for $\ell_f\leq 1-\sigma$ and $\ell_f>1-\sigma$, respectively. Our goal is to upper bound $\frac{\partial \ell_f}{\partial \lambda_f}$, so that we can lower bound the difference between the dual variables, given that the price difference is $\delta$. In proofs of Lemmas~\ref{lem:dualbounds} and \ref{lem:dualbounds2}, we have shown that 
$$
\frac{\partial^2\ell_f}{\partial \lambda_f^2}<0,\quad \frac{\partial}{\partial \delta}\frac{\partial \ell_f}{\partial \lambda_f}<0,
$$
and hence in order to upper bound $\frac{\partial \ell_f}{\partial \lambda_f}$, we set $\lambda_f=0$ and $\delta=0$ in equations \eqref{eq:case1dlambda} and \eqref{eq:case2dlambda}. That gives:
\begin{equation}
    \frac{\partial \ell_f}{\partial \lambda_f}\leq \frac{1}{4}\left(1+\frac{7\sigma-1}{\sqrt{-18\sigma^2+15\sigma+1}}\right),\quad \ell_f\leq 1-\sigma,
\end{equation}
\begin{equation}
    \frac{\partial \ell_f}{\partial \lambda_f}\leq\frac{1}{4}\left(2+\frac{2(\sigma+1)}{\sqrt{13\sigma^2-22\sigma+13}}\right),\quad \ell_f>1-\sigma
\end{equation}
Both of the above equations are increasing with $\sigma$, and are equal to $1$ when $\sigma=1$. Hence:
\begin{equation}
    \frac{\partial \ell_f}{\partial \lambda_f}\leq1,
\end{equation}
or equivalently
\begin{equation}
    \frac{\partial \lambda_f}{\partial \ell_f}\geq 1.
\end{equation}
Since this holds for all $\ell_f$:
\begin{equation}
    \Big|\frac{\lambda_1-\lambda_2}{\ell_1-\ell_2}\Big|\geq\frac{\lambda_1-\lambda_2}{\ell_1-\ell_2}\geq 1.
\end{equation}
Setting $\ell_1-\ell_2=\delta$ concludes the proof.
\end{proof}
Noting that $\frac{\partial g(\lambda_1,\lambda_2,\delta,\sigma)}{\partial \lambda_2}\geq0$ and using Lemma~\ref{lem:dualbounds3} with $\lambda_2\leq\lambda_1-\delta$, we upper bound \eqref{eq:case3fntobound} as:
\begin{equation}
\begin{split}
    g(\lambda_1,\lambda_2,\sigma,\delta)&\leq \hat{g}(\lambda_1,\sigma,\delta)\\
    &=\frac{-7+\sigma+9\delta-2\lambda_1+\sqrt{\hat{\Delta}_1}-2\sqrt{\Delta_2}}{8},
\end{split}
\end{equation}
where
$$
\hat{\Delta}_1=(2\lambda_1-\delta+7\sigma-1)^2+32\sigma(\delta-2\sigma+1).
$$
Our goal is to maximize $\hat{g}$ over its variables, and show that it is always less than $0$. That would mean that when the prices are determined by the FOC, the difference between $\ell_2+\delta$ and $\ell_1$ is always less than zero, which contradicts with $\ell_1-\ell_2=\delta$. We compute the partial derivatives of $\hat{g}$ with respect to $\delta$ and $\lambda_1$ to get:
\begin{equation}
    \frac{\partial \hat{g}(\lambda_1,\sigma,\delta)}{\partial \lambda_1}>0,\quad  \frac{\partial \hat{g}(\lambda_1,\sigma,\delta)}{\partial \delta}>0,
\end{equation}
and hence $\hat{g}(\lambda_1,\sigma,\delta)$ is maximized when $\delta=1-\sigma$ and $\lambda_1=\frac{1}{2}$ (Note that $\lambda_1\leq\frac{(3-\sigma)(3\sigma-1)}{4(5-3\sigma)}\leq\frac{1}{2}$):
\begin{equation}
    \hat{g}(\lambda_1,\sigma,\delta)\leq \hat{g}(1/2,\sigma,1-\sigma){=}\frac{{-}8\sigma{-}1+\sqrt{{-}32\sigma^2+48\sigma+1}}{8}.
\end{equation}
Finally, we observe that $\frac{\partial \hat{g}(1/2,\sigma,1-\sigma)}{\partial \sigma}<0$ \bt{for $\sigma\in[1/2,1]$, and hence $\hat{g}$ is maximized when $\sigma=1/2$. Evaluated at $\sigma=1/2$:
\begin{equation}
    \hat{g}(1/2,1/2,1/2)=\frac{1}{8}(\sqrt{17}-5)<0.
\end{equation}}
Hence, with the FOC satisfying prices, $\ell_2+\delta$ is always less than $\ell_1$, which is a contradiction. This means that FOC can not hold for both firms and thus they can not be in an equilibrium.

We have shown that given $\delta>0$, the FOC can not hold for both of the firms in none of the cases. Hence, the only case when FOC holds for both of the firms is when $\delta=0$, i.e., $\ell_1=\ell_2$. Therefore asymmetric equilibria can not exist. 

\subsection{Proof of Proposition \ref{prop:duomarginalprices}}\label{app:duomarginalprices}
The symmetric duopoly \bt{equilibrium} prices are determined in the proof of Proposition~\ref{prop:unique} (in Appendix~\ref{app:unique}) as:

\begin{equation}
    \label{eq:optduopricesre}
    \lijd=\left\{
\begin{array}{ll}
      \frac{(3-5\sigma)\lmax+2\dijd+\sqrt{\Delta_1^*}}{8} & \frac{\dijd}{\lmax}\leq\frac{3(\sigma-1)^2}{2(\sigma+1)} \\
      \frac{(5-3\sigma)\lmax+2\lambda_{ij}^d-\sqrt{\Delta_2^*}}{4} & o.w. \\

\end{array} 
\right.,
\end{equation}
where
$$\Delta_1^*=4\lmax^2+(2\lambda_{ij}^d+(15\sigma-3)\ell_{\max})(2\lambda_{ij}^d+(1-\sigma)\ell_{\max})$$
and
$$\Delta_2^*=2(\sigma\lmax-\dijd)^2+2(\lmax-\dijd)^2+11(\sigma-1)^2\lmax^2.$$

Both equations in \eqref{eq:optduopricesre} are decreasing functions of $\dijd$. In order to lower bound the difference between the monopoly prices and the duopoly prices, we lower bound the monopoly prices by setting $\lambda_{ij}^{m}=0$ and upper bound the duopoly prices by setting $\lambda_{ij}^{d}=\overline{\lambda}_{ij}$. In order to upper bound the difference, we upper bound the monopoly prices by setting $\lambda_{ij}^{m}=\overline{\lambda}_{ij}$ and lower bound the duopoly prices by setting $\lambda_{ij}^{d}=0$.

\subsection{Proof of Proposition~\ref{cor:pricebounds}}\label{app:pricebounds}
Using $\lambda_{ij}^{m}\leq\overline{\lambda}_{ij}\leq\underset{i,j}{\max} ~\overline{\lambda}_{ij}\leq\dbound$ and $\lambda_{ij}^{m}\geq0$, \eqref{eq:optimalprices} and \eqref{eq:monoboundprices} give:
\begin{equation}
    \frac{(1+\sigma)\lmax}{4}\leq\lijm\leq \frac{7+14\sigma-9\sigma^2}{40-24\sigma}\lmax.
\end{equation}
Furthermore since $\sigma\in[3/5,1]$; $\frac{2}{5}\leq\frac{1+\sigma}{4}$ and $\frac{7+14\sigma-9\sigma^2}{40-24\sigma}\leq\frac{3}{4}$, which completes the part for bounds on monopoly prices. The bounds on duopoly prices is identical using equations in \eqref{eq:optduoprices}.

According to the definitions of $\underline{\ell}^m,\overline{\ell}^m,\underline{\ell}^d,$ and $\overline{\ell}^d$, the bounds on the ratio of prices is:
\begin{equation}
    \frac{\underline{\ell}^d}{\overline{\ell}^m}\leq\frac{\lijd}{\lijm}\leq\frac{\overline{\ell}^d}{\underline{\ell}^m}.
\end{equation}
By plugging in the expressions of $\underline{\ell}^m,\overline{\ell}^m,\underline{\ell}^d,$ and $\overline{\ell}^d$ we get the desired inequality.

\subsection{Proof of Proposition~\ref{cor:demandfunctionbounds}}\label{app:demandbounds}
From \eqref{eq:demandfunctionsmonopoly}, the demand function of the monopoly with the optimal prices is:
\begin{equation}
\label{eq:pfmonodemand}
     D(\lijm,\infty)=\frac{1+\sigma-\frac{2\lijm}{\lmax}}{2\sigma},
\end{equation}
since $\lijm\in[(1-\sigma)\lmax,\sigma\lmax]$ under Assumption~\ref{ass:sigmalambda}.
Plugging in the expressions for $\underline{\ell}^m$ and $\overline{\ell}^m$ and imposing the condition $\sigma\in[3/5,1]$, we get the desired bounds on \eqref{eq:monopolydemandbounds}.

The duopoly demand function for $\lijd=\underline{\ell}^d$ is given by \eqref{eq:demandduoequal} as:
\begin{equation}
    \overline{D}^d=D(\underline{\ell}^d,\underline{\ell}^d)=\frac{1}{2}-\frac{(\frac{2\underline{\ell}^d}{\lmax}+\sigma-1)^2}{8\sigma(1-\sigma)},
\end{equation}
since $\underline{\ell}^d\leq(1-\sigma)\lmax$. The duopoly demand function for $\lijd=\overline{\ell}^d$ is given by \eqref{eq:demandduo2} as:
\begin{equation}
    \underline{D}^d=D(\overline{\ell}^d,\overline{\ell}^d)=\frac{3+\sigma-4\frac{\overline{\ell}^d}{\lmax}}{8\sigma},
\end{equation}
since $\overline{\ell}^d\geq (1-\sigma)\lmax$. By plugging in the expressions for $\underline{\ell}^d$ and $\overline{\ell}^d$ and imposing the condition $\sigma\in[3/5,1]$, we get the desired inequalities in \eqref{eq:duopolydemandbounds}.

According to the definitions of $\underline{D}^m,\overline{D}^m,\underline{D}^d,$ and $\overline{D}^d$, the bounds on the ratio of demand functions is:
\begin{equation}
    \frac{\underline{D}^d}{\overline{D}^m}\leq\frac{D^d}{D^m}\leq\frac{\overline{D}^d}{\underline{D}^m}.
\end{equation}
By plugging in the expressions of $\underline{D}^m,\overline{D}^m,\underline{D}^d,$ and $\overline{D}^d$ and using the condition $\sigma\in[3/5,1]$, we get the desired inequality in \eqref{eq:monoduodemandratio}.
\subsection{Proof of Proposition~\ref{cor:profitbounds}}\label{app:profitbounds}
The total profits generated in monopoly is given by \eqref{eq:monoprofits}. Accordingly, the profits earned by serving the induced demand between OD pair $(i,j)$ is:
\begin{equation}
\label{eq:profitmonosingleod}
    P^m_{ij}=\frac{\theta_{ij}}{4\sigma\ell_{\max}}(\ell_{\max}(1+\sigma)-2\ell_{ij}^m)^2
\end{equation}

Furthermore, lower optimal monopoly prices generate higher profits according to \eqref{eq:monoprofits}. Hence, the upper bound on $P_{ij}^m$ is given by:
\begin{equation}
\begin{split}
    \frac{\theta_{ij}}{4\sigma\ell_{\max}}(\ell_{\max}(1+\sigma)-2\underline{\ell}^m)^2
    =\theta_{ij} \frac{(1+\sigma)^2}{16\sigma}=\theta_{ij}\overline{P}^m,
\end{split}
\end{equation}
To get the lower bound, we evaluate \eqref{eq:profitmonosingleod} at $\lijm=\overline{\ell}^m$. By using the condition $\sigma\in[3/5,1]$, we get the desired inequality in \eqref{eq:monopolyprofitbounds}.

For the profits generated in duopoly, we first show that lower duopoly \bt{equilibrium} prices generate higher profits. Since \eqref{eq:duopolyflowoptimization} bears a similar form to \eqref{eq:flowoptimization}, the dual objective with the optimal prices and dual variables can be \btt{stated} as (similar to \eqref{eq:dualobjective}):
\begin{equation}
\begin{split}
 \label{eq:duodualobjective}
    P^d&=\sum_{i=1}^\bt{n}\sum_{j=1}^\bt{n}\theta_{ij}D(\lijd,\lijd)\left(\lijd-\lambda_{ij}^{d}\right)\\
    &=\sum_{i=1}^\bt{n}\sum_{j=1}^\bt{n} P_{ij}^d,
\end{split}
\end{equation}
where we define 
\begin{equation}
\label{eq:profitsduosingleod}
 P_{ij}^d=\theta_{ij}D(\lijd,\lijd)\left(\lijd-\lambda_{ij}^{d}\right)
\end{equation}
to be profits earned by serving the induced demand between OD pair $(i,j)$. By taking the derivative of $P_{ij}^d$ with respect to $\lijd$:
\begin{equation}
    \frac{dP^d_{ij}}{d\lijd}=\theta_{ij}\left(\frac{dD(\lijd,\lijd)}{d\lijd}\left(\lijd-\lambda_{ij}^{d}\right)+D(\lijd,\lijd)(1-\frac{d\lambda_{ij}^{d}}{d\lijd})\right)
\end{equation}
From \eqref{eq:optduoprices}, $\frac{d\lijd}{d\lambda_{ij}^{d}}\leq 1$. Hence, $\frac{d\lambda_{ij}^{d}}{d\lijd}\geq 1$. Furthermore, $\lijd\geq\lambda_{ij}^{d}$ according to \eqref{eq:optduoprices}. Finally from \eqref{eq:demandduoequal} and \eqref{eq:demandduo2} (evaluated at $\lija=\lijb=\lijd$), $\frac{dD(\lijd,\lijd)}{d\lijd}\leq 0$. All in all that gives: 
$$
\frac{dP^d_{ij}}{d\lijd}\leq 0,
$$
which means lower duopoly \bt{equilibrium} prices generate higher profits. In order to get the upper bound, we evaluate \eqref{eq:profitsduosingleod} at $\lijd=\underline{\ell}^d$ (and $\dijd=0$ in this case)\footnote{When $\lija=\lijb=\lijd=\underline{\ell}^d\leq(1-\sigma)\lmax$, \eqref{eq:duopolyflowoptimization} is not a convex optimization problem (since the demand function is concave in that region). Hence, strong duality might not hold. However, since we are computing an upper bound on the objective function, the objective value of \eqref{eq:duopolyflowoptimization} is always less than or equal to the objective value of the dual problem given by \eqref{eq:duodualobjective}, due to weak duality. Hence, the upper bound holds and is tight when strong duality holds.}. To get the lower bound, we evaluate $\eqref{eq:profitsduosingleod}$ at $\lijd=\overline{\ell}^d$ (and $\dijd=\dbound$ in this case)\footnote{When $\lija=\lijb=\lijd=\overline{\ell}^d\geq(1-\sigma)\lmax$, \eqref{eq:duopolyflowoptimization} is a convex optimization problem since the demand function is linear in that region. Hence, strong duality holds and the value of \eqref{eq:duodualobjective} is equal to the objective value of \eqref{eq:duopolyflowoptimization}.}. To get the desired inequality at \eqref{eq:duopolyprofitbounds}, we impose $\sigma\in[3/5,1]$.

According to the definitions of $\underline{P}^m,\overline{P}^m,\underline{P}^d,$ and $\overline{P}^d$, the bounds on the ratio of profits earned by serving the induced demand for OD pair $(i,j)$ is:
\begin{equation}
    \frac{\underline{P}^d}{\overline{P}^m}\leq\frac{P^d_{ij}}{P^m_{ij}}\leq\frac{\overline{P}^d}{\underline{P}^m}.
\end{equation}
By plugging in the expressions of $\underline{P}^m,\overline{P}^m,\underline{P}^d,$ and $\overline{P}^d$ and using the condition $\sigma\in[3/5,1]$, we get the desired inequality in \eqref{eq:profitsratiomonoduo}.
\subsection{Proof of Proposition~\ref{cor:surplusbounds}}\label{app:csbounds}
The consumer surplus in monopoly is given by \eqref{eq:monocs}. Accordingly, the consumer surplus of customers requesting a ride between OD pair $(i,j)$ is:
\begin{equation}
\label{eq:csmonosingleod}
    \textnormal{CS}^m_{ij}=\theta_{ij}\frac{\lmax(\sigma^2+\sigma+1)-3\lijm(1+\sigma-\frac{\lijm}{\lmax})}{6\sigma},
\end{equation}
Observe that $\frac{\partial \textnormal{CS}^m_{ij}}{\partial\lijm}=\theta_{ij}\frac{6\frac{\lijm}{\lmax}-3\sigma-3}{6\sigma}\leq0$ for $\lijm\in [(1-\sigma)\lmax,\sigma\lmax]$. Hence, lower optimal monopoly prices generate higher consumer surplus. Since $\lijm\geq\underline{\ell}^m=\frac{1+\sigma}{4}\lmax$, the upper bound on  $\textnormal{CS}^m_{ij}$ is given by:
\begin{align}
\nonumber & \theta_{ij}\frac{\lmax(\sigma^2+\sigma+1)-3\lmax\frac{1+\sigma}{4}(1+\sigma-\frac{1+\sigma}{4})}{6\sigma}\\
       &=\theta_{ij} \frac{7\sigma^2-2\sigma+7}{96\sigma}\lmax=\theta_{ij}\overline{\textnormal{CS}}^m.
\end{align}
Similarly, the lower bound on the consumer surplus is given by evaluating \eqref{eq:csmonosingleod} at $\lijm=\overline{\ell}^m=\frac{7+14\sigma-9\sigma^2}{40-24\sigma}\lmax$.  By using the condition $\sigma\in[3/5,1]$, we get the desired inequality in \eqref{eq:monocsbounds}.

Similar to the monopoly, lower duopoly prices generate higher consumer surplus by inducing more customers and generating higher surplus per customer. For $\lijd=\underline{\ell}^d\leq(1-\sigma)\lmax$, the upper bound on the consumer surplus of customers requesting a ride between OD pair $(i,j)$ is computed as:
\begin{equation}
    \begin{split}
        2\theta_{ij}\Big(&\int_{\frac{\lmax}{2}}^\frac{\underline{\ell}^d}{1-\sigma}\int_{\frac{\underline{\ell}^d-(1-\sigma)y}{\sigma}}^{\lmax}(\sigma x+(1-\sigma)y-\underline{\ell}^d)dxdy\\
        &+\int_{\frac{\underline{\ell}^d}{1-\sigma}}^{\lmax}\int_{0}^{\lmax}(\sigma x+(1-\sigma)y-\underline{\ell}^d)dxdy\Big)\\
        &
   =\frac{\theta_{ij}\lmax}{24\sigma(1-\sigma)}\Big((2\sigma)^3-(\sigma+1-2\frac{\underline{\ell}^d}{\lmax})^3\\
    &\hspace{1.5cm}-24\sigma(1-\frac{\underline{\ell}^d}{\lmax})(\sigma-1+\frac{\underline{\ell}^d}{\lmax})\Big)\\
    &=\theta_{ij}\overline{\textnormal{CS}}^d.
    \end{split}
\end{equation}
where the factor 2 is due to the symmetry of two firms.

For $\lijd=\overline{\ell}^d=\frac{1+\sigma}{4}\lmax\geq(1-\sigma)\lmax$, the lower bound is computed as:
\begin{equation}\begin{split}
        &2\theta_{ij}\int_{\frac{\lmax}{2}}^{\lmax}\int_{\frac{\overline{\ell}^d-(1-\sigma)y}{\sigma}}^{\lmax}(\sigma x+(1-\sigma)y-\overline{\ell}^d)dxdy\\
        &=\theta_{ij}\frac{\sigma^2-2\sigma+13}{96\sigma}\lmax=\theta_{ij}\underline{\textnormal{CS}}^d
\end{split}
\end{equation}
By using the condition $\sigma\in[3/5,1]$, we get the desired inequality in \eqref{eq:duocsbound}.

According to the definitions of $\underline{\textnormal{CS}}^m,\overline{\textnormal{CS}}^m,\underline{\textnormal{CS}}^d,$ and $\overline{\textnormal{CS}}^d$, the bounds on the ratio of consumer surplus of customers requesting ride between OD pair $(i,j)$ is:
\begin{equation}
    \frac{\underline{\textnormal{CS}}^d}{\overline{\textnormal{CS}}^m}\leq\frac{\textnormal{CS}^d_{ij}}{\textnormal{CS}^m_{ij}}\leq\frac{\overline{\textnormal{CS}}^d}{\underline{\textnormal{CS}}^m}.
\end{equation}
By plugging in the expressions of $\underline{\textnormal{CS}}^m,\overline{\textnormal{CS}}^m,\underline{\textnormal{CS}}^d,$ and $\overline{\textnormal{CS}}^d$ and using the condition $\sigma\in[3/5,1]$, we get the desired inequality in \eqref{eq:csratiobounds}.

\subsection{MPC with Dynamic Prices in Duopoly}\label{app:mpc}
One possible way to model the real-time duopoly pricing is an alternating-move duopoly game. Specifically, every even $t_0$, firm 1 sets new prices and executes fleet decisions, whereas firm 2 only executes fleet decisions while keeping prices same as the previous time period. Every odd $t_0$, firm 2 sets new prices and executes fleet decisions, whereas firm 1 only executes fleet decisions while keeping prices same as the previous time period. Furthermore, every even $t_0$, firm 1 is able to observe firm 2's prices at the planning time, however, the future prices of firm 2 depend on firm 2's future states, which is unavailable to firm 1. Every odd $t_0$ however, since firm 2 will set the prices, firm 1 is oblivious to what firm 2's prices will be, including the planning time. One possible way of planning for these uncertainties would be to assume that firm 2's unknown prices would be the symmetric duopoly \bt{equilibrium} prices and determine the best strategy accordingly. In respect to these modeling specifications, we can formulate MPC optimization problem with dynamic prices in the duopoly with slight modifications to \eqref{eq:mpcdymono}. In particular, every even $t_0$ firm 1 solves \eqref{eq:mpcdymono} with
\begin{align}
    &D(\ell_{ijt_0}^1,\infty)\leftarrow D(\ell_{ijt_0}^1,\ell_{ijt_0}^2),\\
    &D(\ell_{ijt}^1,\infty)\leftarrow D(\ell_{ijt}^1,\ell_{ij}^d),\quad \forall t>t_0,\\
    &\ell_{ijt}^1=\ell_{ijt-1}^1, \quad\forall i,j\in\bt{\cal N},\;\forall t=t_0+2k-1, k\in \mathbb{Z}^+,
\end{align}
where $\ell_{ijt_0}^2=\ell_{ijt_0-1}^2$. Every odd $t_0$, firm 1 solves \eqref{eq:mpcdymono} with
\begin{align}
    &D(\ell_{ijt}^1,\infty)\leftarrow D(\ell_{ijt}^1,\ell_{ij}^d),\quad \forall t\geq t_0\\
    &\ell_{ijt}^1=\ell_{ijt-1}^1, \quad\forall i,j\in\bt{\cal N},\;\forall t=t_0+2k-2, k\in \mathbb{Z}^+.
\end{align}
The same method is applied for firm 2 with odd/even $t$ switched.

\end{document}